\definecolor{air}{rgb}{0.36, 0.54, 0.66}
\DeclareMathOperator{\perm}{Sym}
\DeclareMathOperator{\frat}{Frat} 
\DeclareMathOperator{\aut}{Aut} 
\DeclareMathOperator{\out}{Out} 
\DeclareMathOperator{\Out}{Out} 
\DeclareMathOperator{\soc}{soc}
\DeclareMathOperator{\Hom}{Hom}
\DeclareMathOperator{\gl}{GL} 
\DeclareMathOperator{\Z}{Z} 
\DeclareMathOperator{\fit}{Fit} 
\DeclareMathOperator{\C}{C}
\newcommand{\Aut}{\mathrm{Aut}}
\newcommand{\psl}{\mathrm{PSL}}
\newcommand{\al}{\alpha}
\def\t{\tau}
\def\d{\delta}
\def\a{\alpha}
\newcommand{\gen}[1]{\left\langle#1\right\rangle} 
\def\<#1{\langle #1\rangle}
\newcommand{\F}{\mathbb F}
\newcommand{\st}{such that }
\newcommand{\ifa}{if and only if }
\newcommand{\ov}[1]{\overline{#1}}
\newcommand{\ca}{\mathcal}
\newcommand{\nn}{\mathrel{\unlhd}}
\newcommand{\sd}{\rtimes}
\def\psl#1#2{\mathord{\mathrm{PSL}_{#1}(#2)}}
\def\gl#1#2{\mathord{\mathrm{GL}_{#1}(#2)}}
\def\gu#1#2{\mathord{\mathrm{GU}_{#1}(#2)}}
\def\psu#1#2{\mathord{\mathrm{PSU}_{#1}(#2)}}
\newtheorem{thm}{Theorem}
\newtheorem{cor}[thm]{Corollary}
\newtheorem{step}{Step} \newtheorem{lemma}[thm]{Lemma}
\newtheorem{prop}[thm]{Proposition} 
 \newtheorem{defn}[thm]{Definition}
\numberwithin{equation}{section}
\let\lam\lambda
\let\om\omega
\renewcommand{\omega}{\lam}
\renewcommand{\leq}{\leqslant}
\renewcommand{\geq}{\geqslant}
\renewcommand{\nn}{\trianglelefteqslant}
\begin{document}
	\bibliographystyle{amsplain}

\title{Abelian supplements in almost simple groups}

\author{Mauro Costantini}
\address{M. Costantini, Universit\`a di Padova, Dipartimento di Matematica ``Tullio Levi-Civita'', Via Trieste 63, 35121 Padova, Italy}
\email{costantini@math.unipd.it}

\author{Andrea Lucchini}
\address{A. Lucchini, Universit\`a di Padova, Dipartimento di Matematica ``Tullio Levi-Civita'', Via Trieste 63, 35121 Padova, Italy}
\email{lucchini@math.unipd.it}

\author{Daniele Nemmi}
\address{D. Nemmi, Universit\`a di Padova, Dipartimento di Matematica ``Tullio Levi-Civita'', Via Trieste 63, 35121 Padova, Italy}
\email{dnemmi@math.unipd.it}
\thanks{Project funded by the EuropeanUnion – NextGenerationEU under the National Recovery and Resilience Plan (NRRP), Mission 4 Component 2 Investment 1.1 - Call PRIN 2022 No. 104 of February 2, 2022 of Italian Ministry of University and Research; Project 2022PSTWLB (subject area: PE - Physical Sciences and Engineering) " Group Theory and Applications"}
\begin{abstract}
	Let $G$ be a finite almost simple group with socle $G_0$. In this paper we prove that whenever $G/G_0$ is abelian, then there exists an abelian subgroup $A$ of $G$ such that $G=AG_0$. We propose a few applications of this structural property of almost simple groups.
\end{abstract}

\maketitle

\hbox{}
%\thispagestyle{empty}
%\newpage

\section{Introduction}

Let $G$ be a finite group and let $N\nn G$. A lot of properties of the group $G$ are inherited by the quotient group $G/N$. On the other hand, the knowledge of $G/N$ and $N$ gives only a partial understanding on $G$. As an example, whenever $H$ is a complement of $N$ in $G$, we know that there exists a subgroup of $G$, $H$, which is isomorphic to $G/N$. An interesting question is whether for a group $G$  and a normal subgroup $N\nn G$, we can find a subgroup $H$ such that $G=HN$ and $H$ preserve some properties of $G/N$, but not necessarily the isomorphism class. An instance of this problem is the following: if $G/N$ belongs to a certain class of groups $\mathcal C$, can we find a subgroup $H$ with $G=HN$ such that $H$ belongs to $\mathcal C$ as well? If $\mathcal C$ respects some properties, the answer is affirmative.

\begin{prop}\label{class}Let $\ca C$ be a class of finite groups which satisfies the following properties:
	\begin{enumerate}
		\item if $Y\nn X$ and $X\in \ca C,$ then $X/Y\in \ca C;$
		\item if $X/\frat(X)\in \ca C,$ then $X\in \ca C.$
	\end{enumerate}
	If $N$ is a normal subgroup of a finite group $G$ and $G/N\in \ca C,$ then there exists $H\in \ca C$ such that $G=HN.$
\end{prop}
\begin{proof}
	We prove the statement by induction on the order of the group. If $N\leq \frat(G),$ then $G/\frat(G)$ is an epimorphic image of $G/N$, so
	by (1) $G/\frat(G) \in \ca C$ and therefore it follows from (2)
	that $G\in \ca C.$
	So we may assume $N\not\leq \frat(G).$ In this case, there exists a maximal subgroup $M$ of $G$ such that $G=MN.$ Moreover, $M/M\cap N \cong MN/\cap N= G/N \in  \ca C$ so there exists $H\in \ca C$ such that
	$M=(M\cap N)H$ and therefore $G=NM=N(M\cap N)H=NH.$
\end{proof}

The previous statement does not in general hold if we do not assume that $\ca C$ satisfies (2). For example if $\ca C$ is the class of the finite abelian groups and $G$ is the quaternion group of order $8$, then $G/\frat(G)\in \ca C$ but $\frat(G)$ does not admit an abelian supplement. However, Proposition \ref{class} holds even when 
$\ca C$ is the class of the finite abelian groups in the particular case when $G$ is a finite almost simple group and $N$ is the socle of $G$. The main result of this paper in fact is the following.

\begin{thm}\label{almost}
	Let $G$ a finite almost simple group with socle $G_0$. If $G/G_0$ is abelian, then $G$ contains an abelian subgroup $A$ such that $G=AG_0.$
\end{thm}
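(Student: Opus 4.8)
The plan is to reduce the statement to a concrete lifting problem and then resolve it using the classification of finite simple groups together with the structure theory of groups of Lie type. First I would dispose of the degenerate cases: if $G=G_0$ we may take $A=1$, and if $\bar A:=G/G_0$ is cyclic we may take $A=\langle g\rangle$ for any $g$ mapping to a generator, since a cyclic group is abelian and $\langle g\rangle G_0=G$. Thus the only interesting situation is when $\bar A$ is a \emph{noncyclic} abelian subgroup of $\Out(G_0)$. I would then reformulate the conclusion: writing $\bar A=\langle \bar x_1,\dots,\bar x_k\rangle$, an abelian supplement exists if and only if the generators admit pairwise commuting lifts $x_1,\dots,x_k\in G$. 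Indeed, commuting elements generate an abelian subgroup $A=\langle x_1,\dots,x_k\rangle$ whose image is all of $\bar A$, whence $AG_0=G$; conversely any abelian supplement supplies such lifts. Note that we are free to choose the generating set of $\bar A$, and that $A$ need only be a \emph{supplement}, so $A\cap G_0$ may be nontrivial -- this extra freedom will be essential.

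By the classification, $G_0$ is alternating, sporadic, or of Lie type. For the alternating groups (apart from $A_6$) and the sporadic groups the outer automorphism group is cyclic of order at most $2$, so $\bar A$ is cyclic and the reduction above applies; the group $A_6\cong\psl{2}{9}$, with $\Out(A_6)\cong C_2\times C_2$, is the one small exception and can be treated directly (or within the Lie type analysis). So the heart of the matter is the case where $G_0$ is of Lie type. Here I would use the description $\Out(G_0)=\operatorname{Outdiag}(G_0)\rtimes(\Phi\times\Gamma)$, where $\operatorname{Outdiag}(G_0)$ is the abelian group of diagonal outer automorphisms, $\Phi$ the cyclic group of field automorphisms, and $\Gamma$ the graph automorphisms, with $\Phi$ and $\Gamma$ commuting. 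Since $\bar A$ is abelian, its image $\bar W$ in $\Phi\times\Gamma$ is abelian (so its projection to $\Gamma$ is cyclic even when $\Gamma=S_3$ in type $D_4$), and its intersection $\bar D=\bar A\cap\operatorname{Outdiag}(G_0)$ is centralised by $\bar W$. The field and graph automorphisms admit canonical commuting lifts normalising a fixed maximal torus, which disposes of the field--graph part.

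The main obstacle is the interaction between $\bar D$ and $\bar W$: one must lift $\bar D$ to automorphisms commuting with the chosen lifts of $\bar W$. The naive attempt -- realising $\bar D$ by elements of the split maximal torus -- fails, because the fixed points of a field automorphism on the split torus typically lie inside $G_0$ and so realise no nontrivial diagonal automorphism; the case $G_0=\psl{2}{p^2}$ already exhibits this, where every element of the split torus commuting with the Frobenius lands in $G_0$. The resolution I would pursue is to realise $\bar D$ inside a suitably chosen \emph{twisted} maximal torus that is stable under the selected field/graph automorphisms and whose fixed points surject onto $\bar D$. For $\psl{2}{p^2}$, for instance, the nonsplit torus of order $p^2+1$ contains a field-fixed involution lying outside $G_0$, and together with the field automorphism it yields the desired abelian $C_2\times C_2$.

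Carrying this out in general requires understanding the action of $\Phi\times\Gamma$ on $\operatorname{Outdiag}(G_0)\cong H^1(F,Z(\mathbf G_{\mathrm{sc}}))$ and locating, for each family of Lie type and each admissible $\bar W$, a torus whose $\bar W$-fixed points reach the prescribed diagonal classes. A recurring subtlety to keep in mind is that the centraliser of a field automorphism in $\operatorname{Inndiag}(G_0)$ is strictly larger than the corresponding subfield subgroup, precisely because of the projective ambiguity in which $\operatorname{Outdiag}(G_0)$ lives; exploiting this enlargement is what makes the twisted tori succeed where the split torus does not. I expect this torus-selection step, carried out family by family and combined with the handling of the finitely many genuinely exceptional small groups, to constitute the bulk of the work.
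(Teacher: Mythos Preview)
Your reduction and identification of the key obstacle are exactly right and coincide with the paper's: the cyclic case is trivial, alternating and sporadic socles fall out because $\Out(G_0)$ is cyclic, and for Lie type the whole difficulty is lifting the diagonal part so that it commutes with chosen lifts of the field/graph part. Your $\psl{2}{p^2}$ example is precisely the paper's construction in disguise: the matrix $A=\begin{pmatrix}0&-\lambda\\1&0\end{pmatrix}$ used there is the involution of the nonsplit torus you describe.

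Where your outline and the paper diverge is in implementation, and here the paper is more concrete than a pure ``choose the right twisted torus'' picture. Two distinct mechanisms are used. For $B_n$, $C_n$, $E_6$, $E_7$, ${}^2\!E_6$, ${}^2\!D_n$ the paper realises the diagonal generator as $\dot w\,h(\chi)$ with $w$ a \emph{Coxeter element} fixed by the relevant graph automorphism, and then solves the equation $\chi'\circ(1-w)=(1-g)\chi$ for a torus correction $h(\chi')$ to the field/graph lift; the crucial point making this solvable is the lattice identity $(1-w)P=Q$ for Coxeter elements, which is the precise incarnation of your surjectivity requirement. For $A_{n-1}$, ${}^2\!A_{n-1}$ and $D_n$ the paper instead works with explicit block matrices (companion-type blocks $A_{w,l}$ and diagonal correctors $X_{w,c}$), because in these cases one must simultaneously handle abelian subgroups with up to three generators (e.g.\ $\langle\delta^{d/2},\phi^s\delta^j,\gamma\delta^k\rangle$ in $\psl{n}{q}$, or the $C_2\times C_2$ diagonal group in $D_n$ with $n$ even), and a single twisted torus does not obviously do the job uniformly. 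The $D_n$ analysis in particular is delicate: the odd-$n$ case is bootstrapped from $D_3\cong A_3$ via the exterior square map $\sigma:\gl{4}{q}\to CO_6(q)^\circ$, and the even-$n$ case from $D_2\cong A_1\times A_1$, in each instance extending by orthogonal $4$-dimensional blocks. So your strategy is sound, but the honest execution is a family-by-family computation rather than a single torus-theoretic argument; the paper's Coxeter-element lemma is the clean conceptual core you are reaching for, while the classical groups still require hands-on matrix work.
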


		The proof of Theorem \ref{almost} is articulated in various cases which are proved separately along the paper. Table \ref{table_proof} contains, for every non-abelian simple group $G_0$, the location of the corresponding proof.
		
		\

	\begin{table}[h]
	\caption{\label{table_proof}The proof of Theorem \ref{almost} in the various cases. Notice that ${\rm Alt}_6\cong\psl{2}{9}$ has been considered in the linear one.}
	\begin{tabular}{ccc}
		\toprule
		&$G_0$ & Reference\\
		\toprule
		alternating & ${\rm Alt}_n$, $G_0\neq{\rm Alt}_6$ & Corollary \ref{pr_alt_spor}
		\\
		\midrule
		\multirow{5}*{classical}&	$A_{n-1}(q)=\psl{n}{q}$ & Theorem \ref{pr_psl}\\
		&${}^2A_{n-1}(q)=\psu{n}{q}$ & Theorem \ref{pr_psu}\\
		&$B_n(q),C_n(q)$ & Theorem \ref{CnBnE7}\\
		&$D_n(q)$ & Theorem \ref{pr_Dnodd}, \ref{pr_Dneven}\\
		&${}^2D_n(q)$ & Theorem \ref{Dntw}\\
		\midrule
		\multirow{6}*{exceptional}&$E_6(q)$ & Theorem \ref{E6}\\
		&${}^2E_6(q)$ & Theorem \ref{E6tw}\\
		&$E_7(q)$ & Theorem \ref{CnBnE7}\vspace{.3em}\\
		&${{}^3\!D_4(q)},E_8(q),F_4(q),G_2(q)$,	&\multirow{2}*{Corollary \ref{pr_alt_spor}}\\	&${}^2B_2(2^{r}),{}^2G_2(3^{r}),{}^2F_4(2^{r})'$&\vspace{.3em}\\
		\midrule
		sporadic &all& Corollary \ref{pr_alt_spor}\\
		\bottomrule
	\end{tabular}
\end{table}

\

This result has also some consequences beyond almost simple groups, in fact we will prove the following corollary as well, on groups with $\fit(G)=1$, where $\fit(G)$ is the Fitting subgroup of $G$.

\begin{cor}\label{fittrivial}
	Let $G$ be a finite group and suppose that $\fit(G)=1$. Let  $N=\soc(G).$  If $a, b$ are two elements of $G$ and $[a,b]\in N,$ then there exist $n, m \in N$ such that $[an,bm]=1.$
\end{cor}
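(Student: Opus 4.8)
The plan is to reduce the statement to Theorem~\ref{almost} by passing to the subgroup generated by $a$, $b$ and $N$. First I would record the structural consequences of $\fit(G)=1$. Since $G$ has trivial Fitting subgroup, every minimal normal subgroup is nonabelian, so $N=\soc(G)=T_1\x\cdots\x T_k$ is a direct product of nonabelian simple groups, these $T_i$ are exactly the components of $G$, and $C_G(N)=Z(N)=1$. Set $H=\la a,b,N\ra$. As $N\le E(H)$ and $[\fit(H),E(H)]=1$, one gets $\fit(H)\le C_H(N)\le C_G(N)=1$; hence $F^*(H)=E(H)=\soc(H)=N$ and $C_H(N)=1$, so $H$ is again a group with trivial Fitting subgroup whose socle is $N$. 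Because $[a,b]\in N$, the images $\ov a,\ov b$ commute in $Q:=H/N$, so $Q=\la\ov a,\ov b\ra$ is abelian. The assertion that $[an,bm]=1$ for suitable $n,m\in N$ is exactly the statement that the commuting pair $(\ov a,\ov b)$ lifts to a commuting pair in $H$. Thus everything reduces to the following $\fit=1$ analogue of Theorem~\ref{almost}: if $\fit(H)=1$, $\soc(H)=N$ and $H/N$ is abelian, then the commuting pair $(\ov a,\ov b)$ lifts to a commuting pair in $H$.

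Next I would reduce to a single orbit of components. The group $Q$ permutes the components $T_1,\dots,T_k$ (conjugation by $N$ fixes each $T_i$), and I decompose $\{1,\dots,k\}$ into $Q$-orbits $\Omega_1,\dots,\Omega_r$, writing $N_j=\prod_{i\in\Omega_j}T_i$. Each $N_j$ is normal in $H$, and the restriction maps $\pi_j\colon H\to\Aut(N_j)$ embed $H$ into $\prod_j\Aut(N_j)$, with $\soc(\pi_j(H))=\inn(N_j)\cong N_j$ and $\pi_j(H)/\soc\cong Q_j$, the transitive image of $Q$ on $\Omega_j$. Since the $\pi_j(N_j)$ recombine to $N$, a commuting lift $(\pi_j(a)\,\nu_j,\pi_j(b)\,\mu_j)$ with $\nu_j,\mu_j\in\inn(N_j)$ found in each factor assembles to a commuting lift $(an,bm)$ in $H$ with $n=\prod_j\nu_j$ and $m=\prod_j\mu_j$ in $N$. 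Hence I may assume $Q$ is transitive on the $d$ components; a transitive abelian permutation group is regular, so $|Q|=d$ and we are in the wreath situation $H\le\Aut(T)\wr Q$ with $N=\inn(T)^d$. If $Q$ is cyclic, say $Q=\la\ov a\ra$, the pair lifts at once: $b=a^s n_0$ for some $s$ and $n_0\in N$, and $(a,a^s)$ is a commuting lift. So the heart of the matter is the case in which $Q$ is a noncyclic (regular) abelian group.

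For that case I would work inside $\Aut(T)\wr Q$, indexing the components by $Q$. Writing $a=\rho_\sigma\hat a$ and $b=\rho_\tau\hat b$, where $\rho_\sigma,\rho_\tau$ are the pure shifts realizing the regular action of $\ov a,\ov b$ and $\hat a,\hat b\in\Aut(T)^Q$ are the twists, the hypothesis $[a,b]\in N$ says precisely that $\ov a,\ov b$ commute in $\out(T)\wr Q$, while correcting $a,b$ by elements of $N=\inn(T)^Q$ leaves their images there unchanged. So the problem is to lift a commuting pair through $\Aut(T)\wr Q\to\out(T)\wr Q$, whose kernel is $\inn(T)^Q$. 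The plan is to concentrate the twists by a gauge change supported on $\inn(T)^Q$ and thereby reduce the commutator equations, which are indexed by $x\in Q$, to a single equation at the base component, governed by the two monodromy automorphisms $A,B\in\Aut(T)$ obtained by composing the twists around the $\sigma$- and $\tau$-cycles through the base point. Commutativity in $\out(T)\wr Q$ forces $\ov A,\ov B$ to commute in $\out(T)$, so $S=\la\inn(T),A,B\ra$ is almost simple with $S/\inn(T)$ abelian; Theorem~\ref{almost} then yields an abelian supplement of $\inn(T)$ in $S$, that is, inner corrections making $A,B$ commute in $\Aut(T)$. Propagating this single correction consistently around the orbit via the regular $Q$-action should produce the required $n,m\in N$.

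The main obstacle is precisely this last (noncyclic regular) case: everything before it is bookkeeping, but turning ``commuting modulo $\inn(T)^Q$ in the wreath product'' into a single almost simple correction requires (i) checking that the twists can be gauged, using \emph{only} inner elements, into a form in which the $Q$-indexed system of commutator equations is equivalent to its base-point instance, and (ii) verifying that the two monodromies genuinely commute modulo $\inn(T)$ so that Theorem~\ref{almost} applies, and that the corrected base data reassemble, through the regular action, into globally commuting elements $an$ and $bm$. I expect this propagation step---transporting the base-point correction consistently around both cycles of the noncyclic group $Q$---to be where the real care is needed.
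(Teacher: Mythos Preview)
Your reduction to $H=\langle a,b,N\rangle$, the verification that $\fit(H)=1$ and $\soc(H)=N$, and the passage to a single transitive orbit of components are all correct and match the paper. The paper likewise lands in a wreath product $X\wr\perm(t)$ with $X=N_G(S_1)/C_G(S_1)$ almost simple and $\langle\sigma,\tau\rangle$ an abelian regular subgroup of $\perm(t)$.

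The decisive difference is \emph{where} Theorem~\ref{almost} enters. You propose to keep the nonabelian base $\Aut(T)$, gauge the twists down to a single ``monodromy'' equation at the base component, and only then apply Theorem~\ref{almost} to the pair $A,B\in\Aut(T)$. The paper does the opposite: since $X/S_1$ is a section of the abelian group $G/N$, Theorem~\ref{almost} gives an abelian $Y\leq X$ with $X=YS_1$; because $X^t=Y^tS_1^t=Y^tN$, one first adjusts $a,b$ by elements of $N$ so that $a,b\in Y\wr\langle\sigma,\tau\rangle$. Now the base is \emph{abelian}, and with $Z=Y\cap S_1$ the commutator expands as
\[
[xn\sigma,ym\tau]=[x,\tau]^\sigma[\sigma,y]^\tau\cdot[n,\tau]^\sigma[\sigma,m]^\tau,
\]
so the problem becomes the purely abelian one of writing an arbitrary $(z_1,\dots,z_t)\in Z^t$ with $\prod_i z_i=1$ as $[\tau,\tilde n][\tilde m,\sigma]$; this is then handled by an elementary argument on the cycle decompositions of $\sigma$ and $\tau$.

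Your alternative route is not wrong in spirit, but the step you yourself flag as the obstacle really is one. The only corrections available are in $\inn(T)^Q$, and with a nonabelian base that gauge freedom does \emph{not} let you concentrate the twists $\hat a,\hat b$ at a single coordinate: you can only do so modulo $\inn(T)$, which is exactly the linearization the paper performs up front by passing to the abelian supplement $Y$. Completing your propagation step would, in effect, force you to rediscover this trick. I would therefore reorganize: invoke Theorem~\ref{almost} once, globally, to replace $X$ by an abelian $Y$, and then carry out the commutator computation inside $Y\wr\langle\sigma,\tau\rangle$ as above; the noncyclic case then requires no separate monodromy analysis, and your cyclic shortcut becomes unnecessary.
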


We now describe an application of the previous corollary, that was our original motivation to look for results in this direction. Let $G$ be a finite non-cyclic group and denote by $d(G)$ the smallest cardinality of a generating set of $G.$ The \emph{rank graph} $\Gamma(G)$ associated to $G$ is the graph whose vertices are the elements of $G$ and where $x$ and $y$ are adjacent vertices if there exists a generating set $X$ of $G$ of cardinality $d(G)$ such that $\{x,y\}$ is a subset of $X.$ When $d(G)=2,$ the graph $\Gamma(G)$ is known with the name of \emph{generating graph} of $G$ and it has been widely studied by several authors, as survey references we recommend \cite{survey_Tim} and \cite{survey_Scott}. A known open question about this graph is whether the subgraph of the generating graph of $G$ induced by its non-isolated vertices is connected. This question is considered quite difficult and related to properties of almost simple groups. It is known that the answer is affirmative if $G$ is soluble \cite{CL3} or if $G$ is a group whose proper quotients are all cyclic, in particular simple groups \cite{bgh}, but only partial results are known for arbitrary finite groups. Clearly the same question can be asked in the more general case when $\Delta(G)$ is the  subgraph of the rank graph $\Gamma(G)$ induced by its non-isolated vertices. In a paper in preparation, Corollary \ref{fittrivial} is used to prove the following result.

\begin{thm}\label{thm_conn}
	If $d(G)\geq 3,$ then $\Delta(G)$ is connected.
\end{thm}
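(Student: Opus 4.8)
The plan is to argue by induction on $|G|$, reducing step by step to the situation in which Corollary \ref{fittrivial} becomes directly applicable, namely $\fit(G)=1$. First I would pass to the Frattini quotient: since $\frat(G)$ consists of non-generators, a subset of $G$ extends to a generating set of size $d(G)$ precisely when its image does in $G/\frat(G)$, and two elements of the same $\frat(G)$-coset have identical neighbourhoods in $\Gamma(G)$, hence (being non-isolated) are joined through any common neighbour. As $d(G)=d(G/\frat(G))$, this lets me assume $\frat(G)=1$. If $\fit(G)\neq 1$, then there is an abelian minimal normal subgroup $N$, and I would lift connectivity from $\Delta(G/N)$: the cosets of $N$ carry an $\mathbb{F}_p$-module structure, and the slack afforded by $d\geq 3$ permits elementary linear-algebra moves (adding module elements to a generator while keeping the tuple generating) that connect all vertices lying over a fixed component of $\Delta(G/N)$. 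This disposes of every abelian chief factor and leaves the core case $\fit(G)=1$, where $\soc(G)=N_1\times\cdots\times N_k$ is a direct product of non-abelian simple groups.

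In the core case I would exploit Corollary \ref{fittrivial} (itself powered by the abelian-supplement Theorem \ref{almost}) to manufacture commuting pairs of generators. Given a generating $d$-set $\{a,b,c_3,\dots,c_d\}$ in which $a$ and $b$ commute modulo $\soc(G)$, so that $[a,b]\in\soc(G)$, the corollary produces $n,m\in\soc(G)$ with $[an,bm]=1$. The structural payoff of a genuinely commuting pair is a \emph{sliding} principle: if $a,b$ commute and $\{a,b,c_3,\dots,c_d\}$ generates, then so does $\{ab^{i},b,c_3,\dots,c_d\}$ for every $i$, because $\langle ab^{i},b\rangle=\langle a,b\rangle$. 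Consequently $a$ and $ab^{i}$ lie at distance at most $2$ in $\Gamma(G)$, sharing the neighbour $b$, and one may move freely inside the abelian subgroup $\langle a,b\rangle$ without leaving the graph. Chaining such moves connects every configuration carrying a commuting pair to a common abelian hub.

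The remaining task is to join an arbitrary non-isolated vertex $x$ to a configuration possessing a commuting pair. I would fix a generating $d$-set through $x$ and use the freedom in the remaining generators — here $d\geq 3$ is essential, since after singling out $x$ and one partner there is still at least one further generator available to secure generation — to arrange that two of the generators commute modulo $\soc(G)$, apply Corollary \ref{fittrivial}, and then invoke the sliding principle to reach the hub.

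The delicate point, and the true obstacle, is that Corollary \ref{fittrivial} controls \emph{commutativity} after the socle adjustment but says nothing about \emph{generation}: replacing $a,b$ by $an,bm$ may well destroy the property that the tuple generates $G$, and simultaneously imposing the commuting-modulo-socle condition is itself non-trivial when $G/\soc(G)$ is non-abelian. Overcoming this requires quantitative generation input — spread and probabilistic generation estimates for almost simple groups and their direct products — to confine the socle adjustments to a large set of ``good'' elements for which generation persists, with the third coordinate guaranteed by $d\geq 3$ absorbing the resulting constraints. This non-abelian-socle analysis, rather than the abelian reductions of the first paragraph, is where the genuine difficulty of the theorem lies.
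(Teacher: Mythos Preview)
The paper does not contain a proof of Theorem~\ref{thm_conn}. Immediately before the statement the authors write that ``in a paper in preparation, Corollary~\ref{fittrivial} is used to prove the following result''; the theorem is merely announced here as an application, and no argument is given. There is therefore nothing in this paper against which to compare your proposal.

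On its own merits, your outline is a plausible strategy and you are honest that it is not a proof. The Frattini reduction is correct and standard. The treatment of an abelian minimal normal subgroup is only gestured at: the assertion that ``elementary linear-algebra moves'' connect all vertices over a component of $\Delta(G/N)$ requires a genuine argument, and one must also handle the possibility that $d(G/N)<d(G)$, in which case the inductive hypothesis does not directly apply to $G/N$. In the core case $\fit(G)=1$ you correctly isolate the real obstruction: Corollary~\ref{fittrivial} lets you replace $a,b$ by commuting elements $an,bm$ in the same $\soc(G)$-cosets, but says nothing about whether $\{an,bm,c_3,\dots,c_d\}$ still generates $G$; and arranging that two members of a generating $d$-tuple commute modulo $\soc(G)$ is itself nontrivial when $G/\soc(G)$ is nonabelian. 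Your suggestion that spread and probabilistic-generation estimates for almost simple groups supply the missing control is reasonable and is consistent with the paper's remark that the $d(G)=2$ case remains open precisely because such techniques meet obstacles there, but as written this portion is a plan rather than an argument.
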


When $d(G)=2$, the techniques used to prove Theorem \ref{thm_conn} encounter some obstacles, but they can suggest a starting point for the case of the generating graph as well.

\

We conclude this introduction by giving an outline of the structure of the paper. We begin with Section \ref{sec_pre} in which we set the stage with some notation and preliminary results. Then, in Sections \ref{sec_lin} and \ref{sec_uni} we deal respectively with linear and unitary groups. After that, in Section \ref{sec_lie} we give more details on Chevalley groups which will be the framework in which we deal with the remaining cases.
\begin{itemize}
	\item Section \ref{sec_BCE7}: groups of type $C_n(q)$, $B_n(q)$, $n\geq 2$ and $E_7(q)$;
	\item Sections \ref{sec_E6} and \ref{sec_2E6}: groups of type $E_6(q)$ and ${}^2\!E_6(q)$;
	\item Section \ref{sec_2D}: groups of type ${}^2\!D_n(q)$;
	\item Sections \ref{sec_Dodd} and \ref{sec_Deven}: groups of type $D_n(q)$.
\end{itemize}

Finally, in Section \ref{sec_corollary} we conclude with the proof of Corollary \ref{fittrivial}.

\section{Notation and preliminary results}\label{sec_pre}

In this section we will present the main strategy for the proof of Theorem \ref{almost} and prove some preliminary results which will also  establish the main theorem for some families of almost simple groups. 
 
 \
 
Let $G_0$ be a finite non-abelian simple group and let
\[\rho:\aut(G_0)\to\out(G_0)\cong \aut(G_0)/G_0\] be the canonical projection. The following definition will provide the language we will use in the proof of our main result.

\begin{defn}\label{pr}
	Let $T$ be an abelian subgroup of $\out(G_0)$. We say that $\tilde{T}\leq\aut(G_0)$ is a $T$-abelian supplement if
	\begin{enumerate}
		\item $\tilde T$ is abelian.
		\item $\rho(\tilde{T})=T$.
	\end{enumerate}

\end{defn}

Notice in particular that if $T$ and $\tilde T$ are as in the previous definition, if $G/G_0\cong T$, then $G=\tilde TG_0$ with $\tilde T$ abelian, therefore
 proving Theorem \ref{almost} is equivalent to proving  that for every non-abelian simple group $G_0$ and every abelian $T\leq\out(G_0)$, there exists a $T$-abelian supplement. The strategy of the proof of Theorem \ref{almost} is in fact the following: given $G_0$, we analyse all the abelian subgroups $T$ of $\out(G_0)$ and prove case by case that there exists a $T$-abelian supplement. Actually, it is not necessary to check each abelian subgroup of $\out(G_0)$, but only the maximal abelian ones, as it is shown by the following lemma.
 
 \begin{lemma}\label{lemma_max}
 	Let $T\leq S\leq\out(G_0)$ with $T$ and $S$ abelian. If there exists a $S$-abelian supplement, then there exists a $T$-abelian supplement as well.
 \end{lemma}
\begin{proof}
	Let $\tilde S$  be an $S$-abelian supplement. Let $\tilde T$ be the preimage of $T$ by the map $\rho\vert_{\tilde S}$. Then $\tilde T\leq\tilde S$ and so it is abelian, moreover $\rho(\tilde T)=\rho\vert_{\tilde S}(\tilde T)=T$ and so $\tilde T$ is a $T$-abelian supplement.
\end{proof}

In particular, whenever $\out(G_0)$ is abelian, to prove Theorem \ref{almost},  it is enough to check that there exists a $\out(G_0)$-abelian supplement.

We will now prove a couple of important lemmas which guarantee the existence of $T$-abelian supplements in some circumstances. 

\begin{lemma}\label{pr_cyclic}
	Let $T$ be a cyclic subgroup of $\out(G_0)$. Then there exists a $T$-abelian supplement.
\end{lemma}
\begin{proof}
	Let $T=\gen{t}$ and let $\tilde t\in\aut(G_0)$ be a preimage of $t$ under $\rho$. Then $\tilde T=\langle \tilde t \rangle$ is a $T$-abelian supplement.
\end{proof}

The previous lemma, together with Lemma \ref{lemma_max}, shows that

\begin{cor}\label{pr_alt_spor}
	Theorem \ref{almost} is valid for all the almost simple groups with socle $G_0$ such that $\out(G_0)$ is cyclic.
	
	In particular, our main result is established whenever 
	\begin{itemize}
		\item $G_0={\rm Alt}_n$, $n\geq5$, with $n\neq 6$;
		\item $G_0={}^3\!D_4(q),E_8(q),F_4(q), G_2(q), \,{}^2B_2(2^{r}),{}^2G_2(3^{r}),{}^2F_4(2^{r})'$;
		\item $G_0$ is a sporadic simple group.
	\end{itemize}
\end{cor}

 Noticing that ${\rm Alt}_6\cong\psl{2}{9}$, this corollary reduces our investigation to the groups of Lie type only.

\

In what follows, we denote by $\mathbb{F}_q$ the field with $q=p^m$ elements, where $p$ is a prime. Moreover, we denote with $\omega$ a generator of $\mathbb{F}_q^\times$. 
Let $G_0$ be a simple group of Lie type over $\mathbb{F}_q$. We denote by $d$ the index of $G_0$ in ${\rm Inndiag}(G_0)$, the subgroup of $\aut(G_0)$ generated by the inner and diagonal automorphisms of $G_0$. We give the values of $d$ in Table~\ref{table_d}, to provide a quick reference to look up, since such values play a central role in the proofs.

\begin{table}[h]
	\caption{\label{table_d}The values of $d$ for simple groups of Lie type.}
	\begin{tabular}{ccc}
		\toprule
		&$G_0$ & $d$\\
		\toprule
		\multirow{6}*{untwisted}&	$A_{n-1}(q)=\psl{n}{q}$ & $(n,q-1)$\\
		&$B_n(q),C_n(q)$ & $(q-1,2)$\\
		&$D_n(q)$ & $(4,q^n-1)$\\
		&$E_6(q)$ & $(3,q-1)$\\
		&$E_7(q)$ & $(2,q-1)$\\
		&$E_8(q),F_4(q),G_2(q)$&$1$ \\
		\midrule
		\multirow{4}*{twisted}
		&${}^2A_{n-1}(q)=\psu{n}{q}$ & $(n,q+1)$\\
		&${}^2D_n(q)$ & $(4,q^n+1)$\\
		&${}^2E_6(q)$ & $(3,q+1)$\\
		&${}^2B_2(2^{r}),{}^3\!D_4(q),{}^2G_2(3^{r}),{}^2F_4(2^{r})$&$1$\\
		\bottomrule
	\end{tabular}
\end{table}
 
We are now able to state another fundamental ingredient for the proof of Theorem~\ref{almost}.

\begin{lemma}\label{pr_complement}
	If $\aut(G_0)$ splits over $G_0$, then there exists a $T$-abelian supplement for every abelian $T\leq\out(G_0)$.
\end{lemma}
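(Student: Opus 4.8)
The plan is to exploit the splitting hypothesis directly: it furnishes a complement $K$ to $G_0$ in $\aut(G_0)$, and this complement is a faithful copy of $\out(G_0)$ sitting inside $\aut(G_0)$. Once we have such a copy, a $T$-abelian supplement for any abelian $T$ is obtained simply by pulling $T$ back into $K$.

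More precisely, I would first invoke the assumption that $\aut(G_0)$ splits over $G_0$ to fix a subgroup $K\leq\aut(G_0)$ with $K\cap G_0=1$ and $KG_0=\aut(G_0)$. The next step is to observe that the restriction $\rho\vert_K\colon K\to\out(G_0)$ is an isomorphism: it is surjective because $KG_0=\aut(G_0)$ gives $\rho(K)=\rho(\aut(G_0))=\out(G_0)$, and it is injective because $\ker(\rho\vert_K)=K\cap\ker\rho=K\cap G_0=1$. Thus $K$ is an isomorphic lift of the whole outer automorphism group.

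With this isomorphism in hand, given any abelian $T\leq\out(G_0)$ I would set $\tilde T:=(\rho\vert_K)^{-1}(T)$, the full preimage of $T$ inside $K$. Since $\rho\vert_K$ is an isomorphism, it carries $\tilde T$ isomorphically onto $T$; in particular $\tilde T\cong T$ is abelian, verifying condition (1) of Definition \ref{pr}. Condition (2) is immediate from the construction, as $\rho(\tilde T)=\rho\vert_K(\tilde T)=T$. Hence $\tilde T$ is the desired $T$-abelian supplement. (Equivalently, one may note that $K$ itself is an $\out(G_0)$-abelian supplement whenever $\out(G_0)$ happens to be abelian, and then appeal to Lemma \ref{lemma_max}; the preimage argument above handles the general case uniformly even when $\out(G_0)$ is nonabelian.)

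I do not expect any genuine obstacle within this lemma itself: everything reduces to the elementary fact that a complement maps isomorphically onto the quotient under $\rho$, so the real content is packaged entirely into the splitting hypothesis. The serious difficulty is displaced to the verification, carried out case by case later in the paper, of \emph{which} simple groups $G_0$ actually have $\aut(G_0)$ splitting over $G_0$; for those families this lemma then disposes of all abelian $T$ at once.
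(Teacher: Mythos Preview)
Your argument is correct and follows exactly the same approach as the paper: take a complement $K$ (the paper calls it $H$) of $G_0$ in $\aut(G_0)$, note that $\rho$ restricts to an isomorphism $K\cong\out(G_0)$, and pull $T$ back to $K$. The paper's proof is simply a terser version of what you wrote.
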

\begin{proof}
	Let $H$ be a complement of $G_0$ in $\aut(G_0)$. Then $H\cong\out(G_0)$ and the subgroup of $H$ corresponding to $T$ is a $T$-abelian supplement.
\end{proof}

In \cite{comp}, A. Lucchini, F. Menegazzo and M. Morigi gave a complete classification of all simple groups of Lie type $G_0$ for which $\aut(G_0)$ splits over $G_0$. Their main result is the following.

\begin{thm}\label{th_comp}
	Let $G_0$ be a simple group of Lie type, $q=p^m$.
	 Then $\aut(G_0)$ splits over $G_0$ if and only if one of the following
	conditions holds:
	\begin{enumerate}
		\item $G_0$ is untwisted, not of type $D_n(q)$, and $( \frac{q-1}{d}
		, d, m) = 1$;
		\item  $G_0 = D_n(q)$ and $( \frac{q^n-1}{d} , d, m) = 1$;
		\item  $G_0$ is twisted, not of type ${}^2D_n(q)$, and $(\frac{q+1}{d} , d, m) = 1$;
		\item $G_0 = {}^2D_n(q)$ and either $n$ is odd or $p = 2$.
	\end{enumerate}
\end{thm}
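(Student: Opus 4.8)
The plan is to build on Steinberg's structural description of $\aut(G_0)$. Realizing $G_0$ as the simple quotient of the fixed-point group $\bar G^{F}$ of a simple algebraic group $\bar G$ under a Steinberg endomorphism $F$, one has the standard decomposition in which $\aut(G_0)$ is generated by the inner-diagonal automorphisms $\mathrm{Inndiag}(G_0)$ together with the field automorphisms $\Phi$ and the graph automorphisms $\Gamma$ arising from symmetries of the Dynkin diagram. Here $\inn(G_0)=G_0$ has index $d$ in $\mathrm{Inndiag}(G_0)$, so the diagonal part $D:=\mathrm{Inndiag}(G_0)/G_0$ is abelian of order $d$ (cyclic except in certain $D_n$ cases), with the values of $d$ as in Table~\ref{table_d}, while $\out(G_0)$ is built from $D$ and the field-graph group $\Phi\times\Gamma$. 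Since a complement to $G_0$ in $\aut(G_0)$ is the same as a subgroup mapping isomorphically onto $\out(G_0)$, the whole question is whether the canonical projection $\rho:\aut(G_0)\to\out(G_0)$ admits a splitting homomorphism.

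First I would dispose of the field and graph directions. The $p$-power Frobenius $\phi_p$ already has order exactly $m$ as an automorphism of $G_0$ (its $m$-th power fixes $\F_q$ pointwise), and the graph automorphisms attached to diagram symmetries have order equal to the symmetry order; moreover field and graph automorphisms commute and meet $G_0$ trivially. Hence the subextension of $\aut(G_0)$ lying over $\Phi\times\Gamma$ splits without difficulty. The entire obstruction therefore concentrates in the diagonal part: one must lift a generator $\delta$ of $D$ to an automorphism $\tilde\delta\in\mathrm{Inndiag}(G_0)$ of order exactly $d$ that is normalized in the correct way by the chosen lift of $\Phi$.

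The heart of the argument is an explicit order computation in the maximal torus. A generator of $D$ is realized by a semisimple element $h$ of $\bar T^{F}$, concretely the value at the field generator $\omega$ of an appropriate cocharacter; the natural lift $\tilde\delta$ then has order a multiple of $d$, with the multiplier dividing $\tfrac{q-1}{d}$, and this discrepancy is what must be absorbed by multiplying by a suitable inner (torus) element. Because $\phi$ acts on $h$ essentially by the $p$-power map, the correcting element must be chosen $\phi$-equivariantly. Tracking these two constraints simultaneously shows that the obstruction to producing a lift of order $d$ compatible with $\Phi$ is governed precisely by $\gcd\!\big(\tfrac{q-1}{d},\, d,\, m\big)$ in the untwisted non-$D_n$ case, and by $\gcd\!\big(\tfrac{q+1}{d},\, d,\, m\big)$ in the twisted non-${}^2D_n$ case, where the twisted Frobenius turns $q-1$ into $q+1$; the extension splits exactly when this gcd equals $1$. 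I expect this simultaneous order-and-equivariance computation — equivalently, the vanishing of the corresponding class in $H^2(\out(G_0),\cdot)$ — to be the main obstacle, since it requires careful bookkeeping of how $F$ acts on the cocharacter lattice modulo $d$.

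Finally, the types $D_n(q)$ and ${}^2D_n(q)$ must be handled separately, which explains their isolation in the statement. Here $D$ need not be cyclic — it can be $\mathbb{Z}/2\times\mathbb{Z}/2$ or $\mathbb{Z}/4$ according to $n\bmod 4$ and $q\bmod 4$ — and the graph automorphism group is larger, so the scalar gcd must be replaced by a lattice-level condition; pushing the computation above through the four-group yields the criterion $\gcd(\tfrac{q^n-1}{d},\, d,\, m)=1$ for $D_n$. For ${}^2D_n(q)$ the diagonal group collapses precisely when $n$ is odd or $p=2$ (there $d\le 2$ and the obstruction is automatically trivial), giving the clean dichotomy in part (4); the remaining subcase again reduces to the equivariant order computation.
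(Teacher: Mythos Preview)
The paper does not prove this theorem at all: it is quoted verbatim as the main result of Lucchini--Menegazzo--Morigi \cite{comp} and used as a black box to eliminate cases. So there is no ``paper's own proof'' to compare your proposal against; any comparison would have to be with \cite{comp} itself, which is not reproduced here.

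As for the proposal on its own terms, the overall architecture --- reduce to the diagonal direction using Steinberg's description of $\aut(G_0)$, then analyse whether a generator of $D=\mathrm{Inndiag}(G_0)/G_0$ can be lifted to a torus element of exact order $d$ that is compatible with the Frobenius action --- is the natural line and is indeed close in spirit to how \cite{comp} proceeds. But what you have written is a plan, not a proof: the crucial assertion that the simultaneous order-and-$\phi$-equivariance constraint is ``governed precisely by $\gcd(\tfrac{q-1}{d},d,m)$'' is stated, not derived, and this is exactly the place where all the work lies. Likewise, the claim that the $\Phi\times\Gamma$ subextension ``splits without difficulty'' is too quick: one must check that the chosen lifts of field and graph automorphisms normalise the chosen lift of $\delta$ in the right way, and it is precisely the interaction of these lifts with the diagonal lift that produces the $m$ in the gcd. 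Finally, your treatment of ${}^2D_n(q)$ is incomplete: you explain why $n$ odd or $p=2$ forces splitting, but you do not argue the converse, namely that for $n$ even and $p$ odd the extension genuinely fails to split; this is a separate non-trivial computation in \cite{comp}.
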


% We conclude this section by explicitly constructing an abelian supplement for $\psl{2}{q}$ in its automorphism group. It is not necessary to analyse this case here  {\color{red} forse questa frase \`e da sistemare}, because it is included in the linear one which will be the focus of the following section, but we present it to give an explicit example which could be helpful to the reader.

We are now ready to begin the investigation of the various types of almost simple groups, starting with the ones with linear socle.

\section{Linear groups}\label{sec_lin}

In this section we prove Theorem \ref{almost} in the linear case. We begin with the easiest case $n=2$, which is better understood on its own and gives us an explicit model for the more general setting. Then we deal with the case $n\geq3$. More specifically, we prove some technical lemmas and analyse all the different types of abelian subgroups $T$ of the outer automorpfism group, showing the existence of $T$-abelian supplements in each case. Finally, the main result of this section is contained in Theorem \ref{pr_psl}.

\begin{thm}\label{pr_psl2}
	Let $G$ a finite almost simple group with socle $G_0=\psl{2}{q}$. Then $G$ contains an abelian subgroup $A$ such that $G=AG_0.$
\end{thm}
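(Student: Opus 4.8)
The plan is to exploit that $\out(\psl{2}{q})$ is always abelian, so that by Lemma~\ref{lemma_max} (together with the remark following it) it suffices to produce an $\out(G_0)$-abelian supplement. Recall that $\out(\psl{2}{q})=\langle\bar\delta\rangle\times\langle\bar\phi\rangle\cong C_d\times C_m$, where $d=(2,q-1)$, $\bar\delta$ is a diagonal automorphism of order $d$ and $\bar\phi$ is the field automorphism of order $m$ (type $A_1$ has no graph automorphism, and $\bar\delta,\bar\phi$ commute). This group is cyclic unless $q$ is odd and $m$ is even: when $d=1$ it is $C_m$, and when $d=2$ with $m$ odd it is $C_{2m}$. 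In the cyclic case Lemma~\ref{pr_cyclic} already gives the supplement; equivalently, Theorem~\ref{th_comp} shows that $\aut(G_0)$ splits over $G_0$ in exactly these cases, so Lemma~\ref{pr_complement} applies. Thus the whole difficulty is concentrated in the case $q$ odd and $m$ even (note that $m$ even forces $q\equiv 1\pmod 4$).

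For the remaining case I would work inside $\aut(G_0)=\mathrm{PGL}_2(q)\rtimes\langle\phi\rangle$, writing $\rho$ for the projection onto $\out(G_0)=C_2\times C_m$. It is enough to find two commuting elements $u,v\in\aut(G_0)$ with $\rho(u)=\bar\delta$ and $\rho(v)=\bar\phi$, since then $A=\langle u,v\rangle$ is abelian and $\rho(A)=\out(G_0)$. I would take $u\in\mathrm{PGL}_2(q)\setminus\psl{2}{q}$ and $v=s\phi$ with $s\in\psl{2}{q}=\inn(G_0)$, so that $\rho(u)=\bar\delta$ and $\rho(v)=\bar\phi$ hold automatically. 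A direct computation using the relation $\phi g\phi^{-1}=\phi(g)$ shows that $u$ and $v=s\phi$ commute if and only if $\phi(u)=s^{-1}us$. Hence the task reduces to exhibiting a non-square element $u\in\mathrm{PGL}_2(q)\setminus\psl{2}{q}$ whose image $\phi(u)$ is conjugate to $u$ inside $\psl{2}{q}$: the conjugating element then furnishes the required $s$.

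The main obstacle is that one cannot simply take $u$ fixed by $\phi$. The $\phi$-fixed points of $\mathrm{PGL}_2(q)$ form $\mathrm{PGL}_2(p)$, and since $m$ is even every element of $\mathbb{F}_p^\times$ is a square in $\mathbb{F}_q^\times$, so $\mathrm{PGL}_2(p)\le\psl{2}{q}$ contains no non-square element; this is precisely why $\aut(G_0)$ fails to split here, and it forces any abelian supplement to meet $G_0$ nontrivially. The way around it is to allow genuine conjugacy rather than equality, choosing $u$ in a $\phi$-invariant class whose $\mathrm{PGL}_2$-class does not split in $\psl{2}{q}$. A convenient choice is an involution $u$ lying outside $\psl{2}{q}$: since $m$ even forces $q\equiv 1\pmod 4$, the involutions outside $\psl{2}{q}$ are exactly the irreducible (non-split) ones, whose centralizer in $\mathrm{PGL}_2(q)$ is a non-split torus of order $q+1$. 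As this torus meets $\mathrm{PGL}_2(q)\setminus\psl{2}{q}$, one has $\mathrm{PGL}_2(q)=\psl{2}{q}\cdot C_{\mathrm{PGL}_2(q)}(u)$, so the $\mathrm{PGL}_2$-class of $u$ is a single $\psl{2}{q}$-class. Because $\phi$ permutes the involutions outside $\psl{2}{q}$, the element $\phi(u)$ lies in this same class; thus $\phi(u)=s^{-1}us$ for some $s\in\psl{2}{q}$, and $A=\langle u,\,s\phi\rangle$ is the desired abelian supplement.
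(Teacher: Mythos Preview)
Your proof is correct and the reduction to $q$ odd with $m$ even is fine. The paper takes a closely related but more explicit route: it treats all odd $q$ uniformly by exhibiting concrete matrices. Writing $Z=\Z(\gl{2}{q})$, it sets
\[
A=\begin{pmatrix}0&-\omega\\1&0\end{pmatrix},\qquad B=\begin{pmatrix}\omega^{(p-1)/2}&0\\0&1\end{pmatrix},
\]
checks directly that $A^{\phi B}=\omega^{(p-1)/2}A$, hence $[A,\phi B]\in Z$, and concludes that $\langle A,\phi B\rangle Z/Z$ is an $\out(G_0)$-abelian supplement (allowing that $\phi B$ may project to $\phi\delta$ rather than $\phi$). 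Note that $A^2=-\omega I$, so $AZ$ is precisely an involution of $\mathrm{PGL}_2(q)$ lying outside $\psl{2}{q}$; in the regime $q\equiv 1\pmod 4$ that you consider, this is exactly a non-split involution of the kind you chose, and $B$ is an explicit witness for the conjugacy $\phi(A)\sim A$.

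The two arguments therefore differ mainly in style. You infer the existence of the centralising element $s\phi$ from the structural fact that the $\mathrm{PGL}_2(q)$-class of a non-split involution does not split in $\psl{2}{q}$ (so $\phi(u)$ is automatically $\psl{2}{q}$-conjugate to $u$), whereas the paper simply writes the conjugator down. Your argument is cleaner conceptually and explains \emph{why} the construction succeeds; the paper's is shorter, needs no separate handling of the cyclic cases, and---crucially for the rest of the paper---the matrices $A$ and $B$ above are the $w=2$ instance of the block matrices $A_{w,l}$, $X_{w,c}$ used later to treat $\psl{n}{q}$ for all $n$, so the computational approach is what feeds into the general linear case.
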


\begin{proof}

Let $Z:=\Z(\gl{2}{q})$. We can suppose that $q$ is odd, otherwise $d=1$ and $\aut(G_0)$ splits over $G_0$. The outer automorphism group in this case is the following:
\[
\out(G_0)=\gen{\delta}\times\gen{\phi},
\]
where $\delta$ is the diagonal automorphism with $|\delta|=2$ and $\phi$ is the field automorphism with $|\phi|=m$ and $[\delta,\phi]=1$.

Let
\[
A:=\begin{pmatrix}
0 & -\omega\\
1 & 0
\end{pmatrix}.\qquad
B:=\begin{pmatrix}
\omega^{\frac{p-1}{2}} & 0\\
0 & 1
\end{pmatrix}.
\]

We have 
\[
A^{\phi B}=\begin{pmatrix}
\omega^{\frac{1-p}{2}} & 0\\
0 & 1
\end{pmatrix}\begin{pmatrix}
0 & -\omega^p\\
1 & 0
\end{pmatrix}\begin{pmatrix}
\omega^{\frac{p-1}{2}} & 0\\
0 & 1
\end{pmatrix}=\begin{pmatrix}
0 & -\omega^{\frac{p+1}{2}}\\
\omega^{\frac{p-1}{2}} & 0
\end{pmatrix}=\omega^{\frac{p-1}{2}}A.
\]
Therefore
\[
[A,\phi B]\in Z
\]
and $\rho(AZ)=\delta$ and $\rho(\phi BZ)$ can be $\phi\delta$ or $\phi$, but in any case
\[
\rho(\gen{AZ,\phi BZ})=\out(G_0)
\]
and therefore
\[
\gen{A,\phi B}Z/Z
\]
is a $\out(G_0)$-abelian supplement.
\end{proof}

\

From now on, $G_0:=\psl{n}{q}$ with $n\geq 3$, so $d=(n,q-1)$. We write also  $Z:=\Z(\gl{n}{q})$. 

In this case, the outer automorphism group is the following:
\[
\out(G_0)=\gen{\delta}\sd\gen{\phi,\gamma},
\]
where $\delta$ is a diagonal automorphism with $|\delta|=d$, $\phi$ is the field automorphism which raises the coefficients of every matrix to the power of $p$ and $\gamma$ is the graph automorphism which transform each matrix in its inverse transpose. In particular we have $|\phi|=m$, $|\gamma|=2$,
$[\phi,\gamma]=1$, $\delta^\phi=\delta^p$ and 
$\delta^\gamma=\delta^{-1}$.

\

In the sequel we will use a lot the following special matrices defined from some integers $w,l,c	\in\mathbb{Z}$ with $w\geq2$:

\[
A_{w,l}:=\begin{pmatrix}
0 &0 &\dots & 0&(-1)^{w-1}\omega^{l}\\
1 & 0  & \dots &0& 0\\
0 & 1  & \dots &0& 0\\
\vdots& \vdots &\ddots &\vdots &\vdots\\
0 & 0  & \dots&1 & 0
\end{pmatrix}\in\gl{w}{q}
\]
and
\[
X_{w,c}:=\begin{pmatrix}
\omega^{c(w-1)} &0 &\cdots & 0&0\\
0 & \omega^{c(w-2)}  & \cdots &0& 0\\
\vdots & \vdots  & \ddots &\vdots& \vdots\\
0& 0 &\cdots &\omega^{c} &0\\
0 & 0  & \cdots&0 & 1
\end{pmatrix}\in\gl{w}{q}.
\]

Notice that

\[
\det{A_{w,l}}=\omega^l.
\]

We now introduce a technical lemma which is the key ingredient of the proofs in this section.

\begin{lemma}\label{block}
	Let $w, l, c\in\mathbb{Z}$ be integers. Let \[
	A:=A_{w,l}\qquad X:=X_{w,c}.
	\]
	 If
\[cw\equiv lp^s(-1)^\varepsilon -l \mod q-1,\]
	then we have
	\[
	A^{\phi^s\gamma^\varepsilon X}=\omega^{c}A.
	\]
\end{lemma}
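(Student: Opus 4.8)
The plan is to verify the claimed conjugation identity $A^{\phi^s\gamma^\varepsilon X}=\omega^c A$ by a direct computation, organized by first understanding how each of the three operations $\phi^s$, $\gamma^\varepsilon$, and conjugation by $X$ acts on the companion-type matrix $A=A_{w,l}$, and then composing them. Since $A$ has a very rigid shape (a subdiagonal of $1$'s together with a single nonzero entry $(-1)^{w-1}\omega^l$ in the top-right corner), I expect each operation to return a matrix of the same shape, so that the whole calculation reduces to tracking how the exponent of $\omega$ in the corner entry and the exponents along the subdiagonal transform.

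First I would compute $A^{\phi^s}$: since $\phi$ raises every entry to the $p$-th power, $\phi^s$ sends $\omega^l\mapsto\omega^{lp^s}$ and fixes the $1$'s and $0$'s, so $A^{\phi^s}=A_{w,lp^s}$. Next I would apply $\gamma^\varepsilon$. The graph automorphism $\gamma$ is inverse-transpose, so I would need the effect of $g\mapsto (g^{-1})^{\top}$ on $A_{w,*}$; the key observation is that $A_{w,*}$ is (up to the corner scalar) a permutation-like cyclic shift, whose inverse is again of the same companion shape, and transposition interchanges the subdiagonal with the superdiagonal. The cleanest route is to record directly that $\gamma^\varepsilon$ replaces $A_{w,l}$ by $A_{w,l'}$ for an appropriate $l'$, with the sign $(-1)^\varepsilon$ in the exponent accounting for the inversion; concretely I would check that $(A_{w,l})^{\gamma}=A_{w,-l}$ up to the bookkeeping of signs, which explains why the hypothesis carries the factor $(-1)^\varepsilon$ on $lp^s$. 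Thus after $\phi^s\gamma^\varepsilon$ we obtain $A_{w,\,lp^s(-1)^\varepsilon}$.

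Finally I would conjugate by the diagonal matrix $X=X_{w,c}$, whose diagonal entries are $\omega^{c(w-1)},\dots,\omega^{c},1$. Conjugating a matrix with nonzero entries only on the subdiagonal and in the top-right corner by a diagonal matrix simply multiplies each such entry by the ratio of the corresponding diagonal entries of $X$. A short check shows that each of the subdiagonal $1$'s is scaled by $\omega^{c}$ (the diagonal exponents decrease by $c$ as we move down), while the corner entry in row $1$, column $w$ is scaled by $\omega^{c(w-1)}/1=\omega^{c(w-1)}$. To conclude that the result equals $\omega^c A = A_{w,\,l+c\cdot(\text{something})}$ scaled uniformly, I would impose exactly the congruence $cw\equiv lp^s(-1)^\varepsilon-l\pmod{q-1}$: this is precisely the relation that forces the corner exponent after all three operations to match $l+c$ modulo $q-1$ while the subdiagonal entries each become $\omega^c$, so that the whole matrix is $\omega^c A_{w,l}$.

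The main obstacle I anticipate is the correct bookkeeping of the graph automorphism $\gamma$ on $A_{w,l}$, specifically pinning down the sign $(-1)^{w-1}$ and the effect of inverse-transpose on the cyclic structure, since this is where the $(-1)^\varepsilon$ and the modular arithmetic must line up exactly; the field and diagonal parts are essentially mechanical. Everything takes place modulo $q-1$ because the relevant quantities are exponents of the generator $\omega$ of $\mathbb{F}_q^\times$, so I would perform all exponent arithmetic in $\mathbb{Z}/(q-1)\mathbb{Z}$ throughout and simply read off the final equality from the hypothesis.
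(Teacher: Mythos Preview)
Your approach is essentially the same as the paper's: a direct computation tracking how $\phi^s$, $\gamma^\varepsilon$, and conjugation by the diagonal $X$ transform the entries of $A$, with the paper choosing to verify the final $X$-conjugation step via the action on basis vectors rather than entrywise. One minor slip to fix: conjugation $M\mapsto X^{-1}MX$ scales the $(1,w)$ entry by $X_{ww}/X_{11}=\omega^{-c(w-1)}$, not $\omega^{c(w-1)}$; with this correction the corner exponent becomes $lp^s(-1)^\varepsilon-c(w-1)$, and setting this equal to $l+c$ is exactly the stated congruence.
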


\begin{proof}
	First notice that
	\[
	A^{\phi^s\gamma^\varepsilon}=\begin{pmatrix}
	0 &0 &\dots & 0&(-1)^{w-1}\omega^{lp^s(-1)^\varepsilon}\\
	1 & 0  & \dots &0& 0\\
	0 & 1  & \dots &0& 0\\
	\vdots& \vdots &\ddots &\vdots &\vdots\\
	0 & 0  & \dots&1 & 0
	\end{pmatrix}.
	\]
	To prove the equality, we will check that $A^{\phi^s\gamma^\varepsilon X}$ and $\omega^{c}A$ act in the same way on the canonical basis' vectors $\{e_i\mid  1\leq i\leq w\}$.

	We have that \[Xe_i=\omega^{c(w-i)}e_i, \qquad1\leq i\leq w.\]
	If $i<w$, then \[A^{\phi^s\gamma^\varepsilon}e_i=e_{i+1}=Ae_i,\] therefore
	\[
	A^{\phi^s\gamma^\varepsilon X}e_i=X^{-1}A^{\phi^s\gamma^\varepsilon }Xe_i=\omega^{c(w-i)}X^{-1}A^{\phi^s\gamma^\varepsilon }e_i=	\]
	\[=\omega^{c(w-i)}X^{-1}e_{i+1}=\omega^{c(w-i)}\omega^{-c(w-i-1)}e_{i+1}=\omega^{c}e_{i+1}=\omega^{c}Ae_i.
	\]
	On the other hand, if $i=w$ we have
	\[
	A^{\phi^s\gamma^\varepsilon X}e_w=X^{-1}A^{\phi^s\gamma^\varepsilon }Xe_w=X^{-1}A^{\phi^s\gamma^\varepsilon }e_w=
	\]
	\[
	(-1)^{w-1}\omega^{lp^s(-1)^\varepsilon}X^{-1}e_1=(-1)^{w-1}\omega^{lp^s(-1)^\varepsilon}\omega^{-c(w-1)}e_1=
	\]
	\[
	(-1)^{w-1}\omega^{lp^s(-1)^\varepsilon-cw+c}e_1=(-1)^{w-1}\omega^{l+c}e_1,
	\]
	since \[cw\equiv lp^s(-1)^\varepsilon -l \mod q-1.\] Finally,
	\[
	\omega^{c}Ae_w=\omega^{c}\cdot (-1)^{w-1}\omega^{l}e_1=(-1)^{w-1}\omega^{l+c}e_1,
	\]
	so the two linear maps coincide on a basis and therefore they are equal in $\gl{w}{q}$.
\end{proof}

We are now able to present the existence of $T$-abelian supplements for some of the possible choices of $T$.

\begin{prop}\label{pgammal}
	Let 
	$T=\gen{\delta^k,\phi^s\gamma^\varepsilon\delta^j}$
	 with $k\mid d$ and $k\not=d$. Then there exists a $T$-abelian supplement.
\end{prop}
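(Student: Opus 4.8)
The plan is to realise $T$ as $\rho(\tilde T)$ for an explicit abelian subgroup $\tilde T\leq\aut(G_0)$ assembled from the matrices $A_{w,l}$ and $X_{w,c}$. Put $r:=p^s(-1)^\varepsilon-1$. A $T$-abelian supplement can exist only when $T$ is abelian, so I assume this; by the relations $\delta^\phi=\delta^p$ and $\delta^\gamma=\delta^{-1}$ the commuting of $\delta^k$ with $\phi^s\gamma^\varepsilon\delta^j$ is exactly the congruence $kr\equiv 0\pmod d$, which I will use repeatedly. I would fix a partition $n=w_1+\dots+w_t$, integers $l_1,\dots,l_t$ and one integer $c$, and form the block-diagonal matrix $A\in\gl{n}{q}$ with diagonal blocks $A_{w_1,l_1},\dots,A_{w_t,l_t}$ and the block-diagonal matrix $X\in\gl{n}{q}$ with diagonal blocks $X_{w_1,c},\dots,X_{w_t,c}$. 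Letting $\tilde a$ be the automorphism induced by conjugation by $A$ and $\tilde b$ the automorphism $\phi^s\gamma^\varepsilon$ followed by conjugation by $X$, the candidate is $\tilde T:=\langle\tilde a,\tilde b\rangle$.

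Then I would verify the two defining properties of a $T$-abelian supplement. Conjugation by a matrix $M\in\gl{n}{q}$ induces the diagonal automorphism $\delta^t$ whose exponent $t$ is the class of $\det M$ in $\FF_q^\times/(\FF_q^\times)^n\cong\langle\delta\rangle$; since $\det A=\omega^{\sum_i l_i}$ and $\det X=\omega^{c\sum_i\binom{w_i}{2}}$, and $\phi,\gamma$ project to the named generators of $\out(G_0)$ with no diagonal part, this gives $\rho(\tilde a)=\delta^{\sum_i l_i}$ and $\rho(\tilde b)=\phi^s\gamma^\varepsilon\delta^{c\sum_i\binom{w_i}{2}}$. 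Imposing
\[
\sum_i l_i\equiv k\pmod d,\qquad c\sum_i\binom{w_i}{2}\equiv j\pmod d
\]
therefore forces $\rho(\tilde T)=T$. For abelianness it is enough that $\tilde a$ and $\tilde b$ commute, i.e. that $A^{\phi^s\gamma^\varepsilon X}$ is a central scalar multiple of $A$. Applying Lemma \ref{block} blockwise (with $w=w_i$, $l=l_i$, and the common $c$) yields $A^{\phi^s\gamma^\varepsilon X}=\omega^{c}A$, one and the same scalar on every block, precisely when
\[
c\,w_i\equiv l_i r\pmod{q-1}\qquad(1\leq i\leq t)
\]
holds; blocks of size $1$, to which the same relation specialises and which contribute nothing to $\sum_i\binom{w_i}{2}$, can be admitted as well.

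The whole construction thus reduces to solving these congruences simultaneously, and this is where the real work lies. Summing the blockwise conditions gives $cn\equiv r\sum_i l_i\pmod{q-1}$; reducing modulo $d$ and using $d\mid n$ turns this into $rk\equiv 0\pmod d$, so the abelianness of $T$ is exactly what makes the system consistent (and the hypothesis $k\neq d$ merely excludes the cyclic case already settled by Lemma \ref{pr_cyclic}, so that $\langle\delta^k\rangle$ is a genuine nontrivial diagonal part). The main obstacle is realising the second determinant congruence $c\sum_i\binom{w_i}{2}\equiv j\pmod d$ for an arbitrary target $j$: the attainable values of $\sum_i\binom{w_i}{2}=\tfrac12(\sum_i w_i^2-n)$ depend on the chosen partition of $n$, so one must select the block sizes so that this quantity hits the residue forced by $j$ and $c$ modulo $d$, while the $w_i$ and $l_i$ still satisfy the blockwise equations pinning down $c$. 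I expect to handle this by first fixing $c$ from the blockwise relations and then exploiting the freedom in the partition to correct the residue modulo $d$, splitting into cases according to $\gcd(r,q-1)$ and the arithmetic of $n$, $d$ and $p^s$; carrying out this last adjustment is the crux of the argument.
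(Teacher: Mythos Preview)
Your setup is essentially the paper's: block-diagonal $A$ and $X$ built from the pieces $A_{w_i,l_i}$ and $X_{w_i,c}$, with Lemma~\ref{block} applied blockwise to force $[A,\phi^s\gamma^\varepsilon X]\in Z$, and the determinant of $A$ chosen to hit $\delta^k$. The divergence, and the gap, is in how you propose to hit the target $\delta^j$ in the second generator.

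You want to realise the congruence $c\sum_i\binom{w_i}{2}\equiv j\pmod d$ by varying the partition of $n$. But once the blockwise conditions $cw_i\equiv l_ir\pmod{q-1}$ and $\sum_i l_i\equiv k\pmod d$ are imposed, the value of $c$ is heavily constrained (essentially $cn\equiv kr\pmod{q-1}$ as you note), and the quantity $c\sum_i\binom{w_i}{2}$ lands in a fixed residue class modulo $d$ that you have no evident mechanism to move. Blocks of size $1$ contribute nothing, and splitting a block $w$ into $w'+w''$ changes $\sum\binom{w_i}{2}$ by $-w'w''$, but the blockwise congruences then force new relations on the $l_i$ which in turn feed back into $c$; there is no reason this should sweep out all of $\mathbb Z/d$. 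You acknowledge this is ``the crux'' and leave it unresolved; as stated the argument does not close.

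The paper sidesteps this entirely. It fixes a single two-block partition $n=t+(n-t)$ with $t=d/k$, chooses the exponent $y$ in $A_{t,y}$ so that $yn\equiv d\pmod{q-1}$ \emph{and} $(y,t)=1$, and then does \emph{not} try to control $\det X$. Instead it produces a third matrix $C\in\gl n q$ with $[A,C]=1$ and $\det C=\omega$: since $(y,t)=1$ one can write $ay+bt=1$ and take $C_0=\omega^bA_{t,y}^a$ in the first block (with identity elsewhere), which has determinant $\omega^{ay+bt}=\omega$. Then whatever exponent $u$ the matrix $X$ happens to give, one replaces $X$ by $XC^{\,j-u}$; this still commutes with $A$ modulo $Z$ and now has $\rho(XC^{\,j-u}Z)=\delta^j$. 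The point is that the correction lives in the centraliser of $A$, not in the structure of $X$ itself, and this is exactly the idea your proposal is missing.
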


\begin{proof}
	  Since $T$ is abelian, \[\delta^k=(\delta^k)^{\phi^s\gamma^\varepsilon\delta^j}=\delta^{kp^s(-1)^\varepsilon},\] which means $d\mid k((-1)^\varepsilon p^s-1)$ or, equivalently  \[t\mid 
	  (-1)^\varepsilon p^s-1, \qquad t:=d/k.\]
	  
	  First, suppose $t=n$, so $t=d=n$ and $k=1$. In this case $T=\gen{\delta,\phi^s\gamma^\varepsilon}$ and 
	  \[
	  \tilde T:=\gen{A_{n,1},\phi^s\gamma^\varepsilon X_{n,\frac{(-1)^\varepsilon p^s-1}{n}}}Z/Z
	  \]
	  is a $T$-abelian supplement, since $\rho(A_{n,1}Z)=\delta$ and by applying Lemma \ref{block} with $$(w,l,c)=\left(n,1,\frac{(-1)^\varepsilon p^s-1}{n}\right),$$ we have 
	  \[
	  \left[A_{n,1},\phi^s\gamma^\varepsilon X_{n,\frac{(-1)^\varepsilon p^s-1}{n}}\right]\in Z.
	  \]
	  
	  So in the sequel we can suppose $t\neq n$.
	\begin{step}
		We find an integer $y\in\mathbb{Z}$ \st $yn\equiv d \mod{q-1}$ and $(y,t)=1$.
	\end{step}
	Since $\left(\frac{n}{d},\frac{q-1}{d}\right)=1$, there exists $\ov{y}\in\mathbb{Z}$ \st $\ov{y}\frac{n}{d}\equiv 1 \mod \frac{q-1}{d}$. Now let \[
	t=p_1^{\al_1}\dots p_{\tilde l}^{\al_{\tilde l}}p_{\tilde l+1}^{\al_{\tilde l+1}}\dots p_l^{\al_l}
	\] its prime factorization, where we ordered the primes in a way \st $p_i$ divides $\ov{y}$ \ifa $1\leq i\leq \tilde l$.
	
	Let 
	\[
	y=\ov{y}+p_{\tilde l+1}\cdots p_l\frac{q-1}{d}.
	\]
	
	For every $p_i$, we have that $p_i$ does not divide $y$, because if $1\leq i\leq \tilde l$, then $\ov{y}$ is divisible by $p_i$ while $p_{\tilde l+1}\cdots p_l\frac{q-1}{d}$ is not (since $(\ov{y},\frac{q-1}{d})=1$) and if $\tilde l<i\leq l$, $p_i$ divides $p_{\tilde l+1}\cdots p_l\frac{q-1}{d}$ but not $\ov{y}$. Therefore $(y,t)=1$, moreover $y\equiv\ov{y}\mod \frac{q-1}{d}$ and $yn\equiv d\mod{q-1}$.
	
	\begin{step}\label{1:st2}
		We construct matrices $A,X\in\gl{n}{q}$ \st $\det{A}=\omega^k$ and $[A,\phi^s\gamma^\varepsilon X]\in Z$. 
	\end{step}
	Since $t\mid n$ and $t\neq1,n$, we have that both $t\geq2$ and $n-t\geq2$  and so we can define
	\[
	A:=\begin{pmatrix}
	A_{t,y} & 0\\
	0 & A_{n-t,k-y}
	\end{pmatrix}.
	\]
	
	First, notice that 
	\[
	\det{A}=\det{A_{t,y}}\det{A_{n-t,k-y}}=\omega^y\omega^{k-y}=\omega^k,
	\]
	therefore $\rho(AZ)=\delta^k$.
	
	Let $r:=((-1)^\varepsilon p^s-1)/t$ and define
	
	 \[
	 X:=\begin{pmatrix}
	 X_{t,yr} & 0\\
	 0 & X_{n-t,yr}
	 \end{pmatrix},
	 \]
	
	We have that 
	\[
	y(p^s(-1)^\varepsilon-1)=yrt \mod{q-1},
	\]
	
	so applying Lemma \ref{block} with $(w,l,c)=(t,y,yr)$ we get
	
	\[
	A_{t,y}^{\phi^s\gamma^\varepsilon X_{t,yr}}=\omega^{yr}A_{t,y}.
	\]
	
	Moreover, recalling that $kt=d\equiv ny \mod{q-1}$, we have that 
	\[
	(k-y)(p^s(-1)^\varepsilon-1)=(k-y)rt=krt-yrt=yr(n-t) \mod{q-1},
	\]
	
	so applying Lemma \ref{block} with $(w,l,c)=(n-t,k-y,yr)$ we get
	
	\[
	A_{n-t,k-y}^{\phi^s\gamma^\varepsilon X_{n-t,yr}}=\omega^{yr}A_{n-t,k-y}.
	\]

	Therefore we have
	\[
	A^{\phi^s\gamma^\varepsilon X}=\omega^{yr}A
	\]
	or, equivalently,
	\[
	[A,\phi^s\gamma^\varepsilon X]\in Z.
	\]
	
	\begin{step}\label{1:st3}
		We find a matrix  $C\in\gl{n}{q}$ \st $[A,C]=1$ with $\det{C}=\omega$.
	\end{step}

	Recall that $(y,t)=1$, so there exist $a,b\in\mathbb{Z}$ \st $ay+bt=1$.
	Let 
	\[
	C_0:=\omega^b A_{t,y}^a \in\gl{t}{q}.
	\]
	We have that $[A_{t,y},C_0]=1$ and 
	\[
	\det{C_0}=\det{A_{t,y}}^a\omega^{bt}=\omega^{ay+bt}=\omega.
	\]
	Let 
	\[
	C:=\begin{pmatrix}
	C_0 & 0\\
	0 & 1
	\end{pmatrix}.
	\]
	We have $[A,C]=1$ and $\det{C}=\det{C_0}=\omega$.
	\begin{step}
		We complete the proof by constructing a $T$-abelian supplement.
	\end{step}

	Let $u\in\mathbb{Z}$ \st $\rho(XZ)=\delta^u$. 
	Combining Steps \ref{1:st2} and \ref{1:st3}, we get
	\[
	[A,\phi^s\gamma^\varepsilon XC^{j-u}]\in Z,
	\]
	with $\rho(AZ)=\delta^k$ and $\rho(XC^{j-u}Z)=\delta^u\delta^{j-u}=\delta^j $.
	Therefore
	\[
	\tilde T:=\gen{A,\phi^s\gamma^\varepsilon  XC^{j-u}}Z/Z
	\]
	is a $T$-abelian supplement.
	
\end{proof}

To continue our investigation, we need another couple of small lemmas.

\begin{lemma}\label{comm}
	Let $A,B\in\gl{w}{q}$. Then
	\[
	[\phi^sA,\gamma B]=1
	\]
	\ifa
	\[
	B=A^TB^{\phi^s}A.
	\]
\end{lemma}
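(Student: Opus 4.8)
The plan is to treat this as a direct computation inside the semidirect product $\gl{w}{q}\rtimes\gen{\phi,\gamma}$, in which $\phi$ acts on a matrix by raising each entry to the $p$-th power, $\gamma$ acts by $M\mapsto(M^{-1})^T$, and these two automorphisms commute with $\gamma^2=1$. Since both $\phi^sA$ and $\gamma B$ have the same image $\gen{\phi,\gamma}$-component structure, their commutator automatically lands in $\gl{w}{q}$; so the whole claim reduces to computing that single matrix and deciding when it equals the identity. There is no biconditional subtlety to handle separately: once the commutator is expressed as an explicit matrix, both implications fall out of one equation.

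First I would expand $[\phi^sA,\gamma B]=A^{-1}\phi^{-s}B^{-1}\gamma\,\phi^sA\,\gamma B$ and normalize by repeatedly applying the rule $v\,h=h\,v^h$, where $v^h=h^{-1}vh$, to push every occurrence of $\phi$ and $\gamma$ to the left. Using that $\phi$ and $\gamma$ commute and that $\gamma^2=1$, the accumulated automorphism part collapses to $\phi^{-s}\gamma\phi^s\gamma=1$, confirming that the commutator is a pure matrix and leaving behind a product of suitably conjugated copies of $A$, $B$ and their inverses.

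Next I would simplify this matrix product using the two structural facts that make everything telescope: transpose is an anti-automorphism, so $(XY)^T=Y^TX^T$, and it commutes with the Frobenius $M\mapsto M^{\phi^s}$, together with the rule $v^\gamma=(v^{-1})^T$. Tracking the exponents of $\phi$ carefully — in particular that applying $\phi^s$ to $A^{\phi^{-s}}$ returns $A$ — the product should reduce to $[\phi^sA,\gamma B]=A^{-1}(B^{\phi^s})^{-1}(A^T)^{-1}B$. Setting this equal to the identity and clearing factors on the left, first by $A$, then by $B^{\phi^s}$, then by $A^T$, yields exactly $B=A^TB^{\phi^s}A$, establishing both directions simultaneously.

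I expect the only genuine obstacle to be bookkeeping rather than insight: one must fix a single convention for the order of factors in the semidirect product and for left-versus-right conjugation, and then apply the transpose anti-automorphism consistently, since a misplaced inverse or a swapped product order would produce a superficially different (and incorrect) relation. Beyond the commutativity of $\phi$ and $\gamma$ and the compatibility of transpose with Frobenius, no deeper idea is needed.
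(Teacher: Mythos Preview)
Your proposal is correct and is precisely the routine computation the paper has in mind; its own proof reads in full ``Easy computation.'' Your reduction to $[\phi^sA,\gamma B]=A^{-1}(B^{\phi^s})^{-1}(A^T)^{-1}B$ and the subsequent rearrangement to $B=A^TB^{\phi^s}A$ is exactly the intended argument.
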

\begin{proof}
	Easy computation.
\end{proof}

\begin{lemma}\label{gX}
	If $\alpha,\beta\in\mathbb{Z}$ are \st 
	\[
	\beta\equiv 2\alpha+p^s\beta \mod{q-1},
	\]
	then
	\[
	[\phi^sX_{w,\alpha},\gamma X_{w,\beta}]=1.
	\]
\end{lemma}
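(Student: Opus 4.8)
The plan is to apply the characterization from Lemma~\ref{comm} directly to the diagonal matrices $A=X_{w,\alpha}$ and $B=X_{w,\beta}$. Since both matrices are diagonal, they equal their own transposes, so the condition $B=A^TB^{\phi^s}A$ simplifies to $B=A\cdot B^{\phi^s}\cdot A$ as an equation among diagonal matrices in $\gl{w}{q}$, which can be checked entry by entry. This reduces the whole lemma to verifying a scalar congruence on each diagonal position.

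Concretely, I would first recall that $X_{w,c}$ has diagonal entries $\omega^{c(w-i)}$ for $1\leq i\leq w$. Applying $\phi^s$ to $X_{w,\beta}$ raises each entry to the power $p^s$, so the $i$-th entry of $X_{w,\beta}^{\phi^s}$ is $\omega^{\beta(w-i)p^s}$. The $i$-th diagonal entry of the product $X_{w,\alpha}X_{w,\beta}^{\phi^s}X_{w,\alpha}$ is therefore
\[
\omega^{\alpha(w-i)}\cdot\omega^{\beta(w-i)p^s}\cdot\omega^{\alpha(w-i)}=\omega^{(w-i)(2\alpha+p^s\beta)},
\]
while the $i$-th entry of $X_{w,\beta}$ is $\omega^{(w-i)\beta}$. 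These agree for every $i$ precisely when $(w-i)(2\alpha+p^s\beta)\equiv(w-i)\beta\pmod{q-1}$ holds for all $i$, and this is implied by the hypothesis $\beta\equiv 2\alpha+p^s\beta\pmod{q-1}$. Thus $X_{w,\beta}=X_{w,\alpha}^TX_{w,\beta}^{\phi^s}X_{w,\alpha}$, and Lemma~\ref{comm} yields $[\phi^sX_{w,\alpha},\gamma X_{w,\beta}]=1$.

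There is no real obstacle here; the lemma is designed so that the matrix identity collapses to a single congruence once the diagonal structure is exploited. The only point requiring a moment of care is bookkeeping the action of $\phi^s$ on the field elements $\omega^{c(w-i)}$ (it acts as the $p^s$-power map on each scalar entry, not on the matrix positions) and confirming that the hypothesis, being the same congruence for each index $i$, suffices uniformly across all diagonal entries. Since the congruence $\beta\equiv 2\alpha+p^s\beta$ is independent of $i$, multiplying it by the integer $(w-i)$ preserves it modulo $q-1$, so a single application of the hypothesis settles every coordinate at once.
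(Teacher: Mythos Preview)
Your proof is correct and essentially identical to the paper's own argument: both invoke Lemma~\ref{comm}, observe that the diagonal matrices equal their transposes so the condition becomes $X_{w,\beta}=X_{w,\alpha}X_{w,\beta}^{\phi^s}X_{w,\alpha}$, and then verify this entry by entry using the congruence hypothesis. The only cosmetic difference is that the paper writes $X_{w,\beta}^{p^s}$ rather than $X_{w,\beta}^{\phi^s}$ for the matrix with entries raised to the $p^s$-th power.
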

\begin{proof}
	Since $X_{w,\alpha},X_{w,\beta}$  are diagonal, for Lemma \ref{comm} we just need to check that
	\[X_{w,\beta}=X_{w,\alpha}X_{w,\beta}^{p^s}X_{w,\alpha}.\]
	
	By inspecting the coefficients on the diagonal, for every $1\leq i\leq w$ we have 
	\[
	\omega^{\beta(w-i)}=\omega^{\alpha(w-i)}\omega^{p^s\beta(w-i)}\omega^{\alpha(w-i)}=\omega^{(2\alpha+p^s\beta)(w-i)},
	\]
	which is true because of the hypothesis on $\alpha$ and $\beta$. 
\end{proof}

We now show the existence of $T$-abelian supplements for other choices of $T$.

\begin{prop}\label{3gen}
	Let $d$ be even, then we can find a $T$-abelian supplement for $T$ of the form $T=\gen{\delta^{d/2},\phi^s\delta^j,\gamma\delta^k}$. 
\end{prop}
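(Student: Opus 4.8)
The plan is to realise the three generators of $T$ by explicit elements of $\gl{n}{q}$, composed with $\phi^s$ and $\gamma$, that pairwise commute modulo $Z$, and then to correct their determinants so that they project onto the prescribed outer classes. First I would record the constraint imposed by abelianity of $T$. Using $\delta^\phi=\delta^p$ and $\delta^\gamma=\delta^{-1}$, the element $\delta^{d/2}$ commutes with $\phi^s\delta^j$ precisely when $2\mid p^s-1$, and it commutes with $\gamma\delta^k$ automatically since $\delta^{d/2}=\delta^{-d/2}$; finally $\phi^s\delta^j$ and $\gamma\delta^k$ commute \ifa
\[
k(1-p^s)\equiv 2j\pmod d.
\]
Since $d$ is even we have $q$ odd, hence $p$ odd and $p^s-1$ even, so the first condition is free and only the displayed congruence survives as a hypothesis.

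Next I would build the matrices. For the class $\delta^{d/2}$ I take $A:=A_{n,d/2}$, so that $\det A=\omega^{d/2}$ and $\rho(AZ)=\delta^{d/2}$. For the field and graph parts I look for diagonal matrices and set the field generator to be $\phi^sX_{n,\alpha}$ and the graph generator to be $\gamma X_{n,\beta}$. Applying Lemma \ref{block} with $(w,l,c)=(n,d/2,\alpha)$ and $\varepsilon=0$ gives $[A,\phi^sX_{n,\alpha}]\in Z$ provided $\alpha n\equiv\frac d2(p^s-1)\pmod{q-1}$; applying it with $(w,l,c)=(n,d/2,\beta)$, $s=0$ and $\varepsilon=1$ gives $[A,\gamma X_{n,\beta}]\in Z$ provided $\beta n\equiv-d\pmod{q-1}$; and Lemma \ref{gX} gives $[\phi^sX_{n,\alpha},\gamma X_{n,\beta}]=1$ provided $\beta(1-p^s)\equiv2\alpha\pmod{q-1}$.

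The technical heart, and the step I expect to be the main obstacle, is to solve these three congruences in $\alpha,\beta$ simultaneously. Each of the first two is solvable (the divisibility $d\mid\frac d2(p^s-1)$ uses $p$ odd, and $d\mid d$ is trivial), and a short computation shows that multiplying the third by $n$ is a consequence of the first two: reducing $n\beta(1-p^s)$ and $2\alpha n$ via the first two congruences both yield $d(p^s-1)$. Thus the third congruence holds up to an ambiguity which is a multiple of $(q-1)/d$, and I would exploit the freedom in choosing $\alpha$ and $\beta$ modulo $(q-1)/d$ to annihilate this ambiguity. This is the same kind of arithmetic bookkeeping already met in Step 1 of Proposition \ref{pgammal}; here the parity of $q-1$, again forced by $d$ even, is what makes $\beta(1-p^s)\equiv2\alpha$ solvable in $\alpha$.

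Finally I would adjust the determinants so that the field and graph generators project onto $\phi^s\delta^j$ and $\gamma\delta^k$ rather than onto the classes forced by $\det X_{n,\alpha}=\omega^{\alpha\binom n2}$ and $\det X_{n,\beta}=\omega^{\beta\binom n2}$. As in Step 3 of Proposition \ref{pgammal} I would multiply each by a matrix of determinant $\omega$ commuting with $A$ (constructed from a block of $A$ by the coprimality trick, splitting $A$ as $\diag(A_{t,\cdot},A_{n-t,\cdot})$ if necessary to secure a unit step in the determinant), chosen so as not to disturb the cross-commutator. The congruence $k(1-p^s)\equiv2j\pmod d$ is exactly what guarantees that the two corrections are compatible and that all three commutators remain in $Z$. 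The subgroup
\[
\tilde T:=\gen{A,\ \phi^sX_{n,\alpha}C_1,\ \gamma X_{n,\beta}C_2}Z/Z
\]
is then abelian and satisfies $\rho(\tilde T)=T$, hence it is a $T$-abelian supplement.
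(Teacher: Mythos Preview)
Your congruence framework is sound — in fact the choice $\beta=-y$, $\alpha=y(p^s-1)/2$ with $yn\equiv d\pmod{q-1}$ solves all three simultaneously — but the final \emph{determinant adjustment} step has a genuine gap, and this is where your approach diverges from the paper's.

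With $A=A_{n,d/2}$ a single companion block, the obvious elements of its centralizer are $\omega^bA^a$, whose determinants lie in $\omega^{\mathbb Z\cdot\gcd(d/2,n)}=\omega^{(d/2)\mathbb Z}$ since $d\mid n$. Hence the corrections $C_1,C_2$ you can write down shift the $\delta$-component only by multiples of $\delta^{d/2}$, so you cannot reach $\gamma\delta^k$ for arbitrary $k$ modulo $d/2$. Your parenthetical remedy ``splitting $A$ as $\diag(A_{t,\cdot},A_{n-t,\cdot})$ if necessary'' is inconsistent: once $A$ is changed, the congruences you solved for $\alpha,\beta$ (which used $w=n$ and $l=d/2$ in Lemma~\ref{block}) no longer apply. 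Moreover, even granting a $C_2$ of determinant $\omega$, you do not verify that $[\phi^sX_{n,\alpha}C_1,\gamma X_{n,\beta}C_2]\in Z$; this is not automatic, and the abelianity constraint $k(1-p^s)\equiv 2j\pmod d$ is a statement in $\out(G_0)$, not a relation among your matrices.

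The paper sidesteps both difficulties by taking $A=\diag(A_{2,y},A_{n-2,d/2-y})$ with $y$ odd from the outset. The $2\times2$ block is what makes the cross-commutator tractable: the paper builds \emph{two} abelian candidates $\hat T_1,\hat T_2$ whose $\gamma$-parts have determinants differing by $\omega^y$, and checks by an explicit computation with Lemma~\ref{comm} (using $Y_\gamma=\bigl(\begin{smallmatrix}0&-1\\1&0\end{smallmatrix}\bigr)$) that the modified triple in $\hat T_2$ is still abelian. Because $y$ is odd, one of the two lands in the correct $\gen{\delta,\gamma}$-conjugacy class of $\gamma\delta^k$; conjugating and then invoking the abelianity congruence $2j\equiv -k(p^s-1)\pmod d$ forces the $\phi^s$-part to be right as well. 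So the paper never tries to hit $T$ directly by centralizer corrections; it produces two explicit abelian lifts and uses a parity/conjugacy argument to select one.
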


\begin{proof} As in the previous case, this proof is articulated in different steps.
	\setcounter{step}{0}
	\begin{step}
		We find an integer $y\in\mathbb{Z}$ \st $yn\equiv d \mod{q-1}$ and $y$ is odd.
	\end{step}

	Since $\left(\frac{n}{d},\frac{q-1}{d}\right)=1$, there exists $\ov{y}\in\mathbb{Z}$ \st $\ov{y}\frac{n}{d}\equiv 1 \mod \frac{q-1}{d}$. If $\ov{y}$ is odd,  define $y:=\ov{y}$; if $\ov{y}$ is even, therefore $\frac{q-1}{d}$ is odd, define $y:=\ov{y}+\frac{q-1}{d}$ which is odd and \st $y\frac{n}{d}\equiv 1 \mod \frac{q-1}{d}$. So, we have $yn\equiv d \mod{q-1}$ with $y$ odd.
	
	\begin{step}
		We construct matrices $A,X_\phi,X_\gamma\in\gl{n}{q}$ \st $\det{A}=\omega^{d/2}$ and $\hat{T_1}:=\gen{A,\phi^sX_\phi,\gamma X_\gamma}Z/Z$ is abelian.
	\end{step}

	Since $n-2\geq2$ we can define 
	
	\[
	A:=\begin{pmatrix}
	A_{2,y} & 0\\
	0 & A_{n-2,d/2-y}
	\end{pmatrix},
	\]
	so that $\rho(AZ)=\delta^{d/2}$ since
	\[
	\det{A}=\det{A_{2,y}}\det{A_{n-2,d/2-y}}=\omega^y\omega^{d/2-y}=\omega^{d/2}.
	\]
	
	Let $r:=(p^s-1)/2$. 
	Considering the automrphisms $\phi^s$ and $\gamma$ let us now argue as in Step \ref{1:st2} of Propostion \ref{pgammal} and construct 
	\[
	X_\phi:=\begin{pmatrix}
	X_{2,yr} & 0\\
	0 & X_{n-2,yr}
	\end{pmatrix},
	\]
	
	so that
	\[
	A^{\phi^sX_\phi}=\omega^{yr}A
	\]
	and
	\[
	X_\gamma:=\begin{pmatrix}
		X_{2,-y} & 0\\
		0 & X_{n-2,-y}
	\end{pmatrix}
	\]
	
	so that
	
	\[
	A^{\gamma X_\gamma}=\omega^{-y}A.
	\]
	
	Since 
	\[
	-y\equiv 2yr-yp^s \mod{q-1},
	\]
	for Lemma \ref{gX} we have \[
	\left[\phi^sX_{2,yr},\gamma X_{2,-y}\right]=\left[\phi^sX_{n-2,yr},\gamma X_{n-2,-y}\right]=1\]
	and therefore
	\[
	\left[\phi^sX_\phi,\gamma X_\gamma\right]=1.
	\]
	
	From this, we obtain that
	\[
	\hat{T_1}:=\gen{A,\phi^sX_\phi,\gamma X_\gamma}Z/Z
	\]
	is abelian.

	\begin{step}
	We construct matrices $X_\phi',X_\gamma'\in\gl{n}{q}$ \st $\det{X_\gamma'}=\omega^y\det{X_\gamma}$ and  $\hat{T_2}:=\gen{A,\phi^sX_\phi',\gamma X_\gamma'}Z/Z$ is abelian. 
	\end{step}
	
	Let us define
	
	\[
	Y_\phi:=X_{2,yr}A_{2,y}^{-r},\quad Y_\gamma:=X_{2,-y}A_{2,y}=\begin{pmatrix}
	0 & -1\\
	1 & 0
	\end{pmatrix},\]
	
	\[
	C_\phi:=\begin{pmatrix}
	A_{2,y}^{-r} & 0\\
	0 & 1
	\end{pmatrix}, \quad 
	C_\gamma:=\begin{pmatrix}
	A_{2,y} & 0\\
	0 & 1
	\end{pmatrix}
	\]
	and
	\[
	 X_\phi':=X_\phi C_\phi=\begin{pmatrix}
	Y_\phi & 0\\
	0 & X_{n-2,yr}
	\end{pmatrix}, \quad X_\gamma':=X_\gamma C_\gamma=\begin{pmatrix}
	Y_\gamma & 0\\
	0 & X_{n-2,-y}
	\end{pmatrix}.
	\]
	
	Since $C_\phi,C_\gamma\in\C_{\gl{n}{q}}(A)$, we have
	\[
		A^{\phi^sX_\phi'}=\omega^{yr}A\qquad	A^{\gamma X_\gamma'}=\omega^{-y}A.
	\]
	
	Notice that 	
	
	\[
	X_{2,yr}^TY_\gamma X_{2,yr}=\begin{pmatrix}
	\omega^{yr} & 0\\
	0 & 1
	\end{pmatrix}\begin{pmatrix}
	0 & -1\\
	1 & 0
	\end{pmatrix}\begin{pmatrix}
	\omega^{yr} & 0\\
	0 & 1
	\end{pmatrix}=\omega^{yr}Y_\gamma
	\]
	
	and
	
		\[
	A_{2,y}^TY_\gamma A_{2,y}=\begin{pmatrix}
	0 & 1\\
	-\omega^y & 0
	\end{pmatrix}\begin{pmatrix}
	0 & -1\\
	1 & 0
	\end{pmatrix}\begin{pmatrix}
	0 & -\omega^y\\
	1 & 0
	\end{pmatrix}=\begin{pmatrix}
	0 & -\omega^y\\
	\omega^y & 0
	\end{pmatrix}=\omega^yY_\gamma.
	\]
	
	From this we deduce
	\[
	Y_\phi^TY_\gamma^{\phi^s}Y_\phi=\left(A_{2,y}^T\right)^{-r} X_{2,yr}^T Y_\gamma  X_{2,yr}  A_{2,y}^{-r}=
	\]
	\[
	=\omega^{yr}\left(A_{2,y}^T\right)^{-r} Y_\gamma  A_{2,y}^{-r}=Y_\gamma.
	\]
	Since $X_{n-2,yr}^TY_{n-2,-y}^{\phi^s}X_{n-2,yr}=Y_{n-2,-y}$, this means that
	\[
	\left[\phi^sX_\phi',\gamma X_\gamma'\right]=1.
	\]
	
	Therefore
		\[
	\det{X_\gamma'}=\det{X_\gamma}\det{C_\gamma}=\det{X_\gamma}\det{A_{2,y}}=\omega^y\det{X_\gamma}.
	\]
	and
	\[
	\hat{T_2}:=\gen{A,\phi^sX_\phi',\gamma X_\gamma'}Z/Z
	\]
	is abelian.
	
	\begin{step}
		We complete the proof by constructing a $T$-abelian supplement.
	\end{step}
	 
	 Let $\rho(X_\gamma Z)=\delta^u$ for some $u\in\mathbb{Z}$, so $\rho(X_\gamma' Z)=\delta^{y+u}$. Given that $y$ is odd, one of $u-k$ or $y+u-k$ is even. Since $\gamma\delta^{x}$ is conjugate to $\gamma\delta^{y}$ in $\gen{\delta,\gamma}$ if $y-x$ is even, one of $\gamma\delta^{u}$ or $\gamma\delta^{u+y}$ is conjugate to $\gamma\delta^k$. 
	
	Let 
	\[
	\hat{T}:=\begin{cases}
	\hat{T_1}\qquad \text{if $u-k$ is even}\\
	\hat{T_2}\qquad \text{if $u+y-k$ is even}
	\end{cases},
	\]
so that there exists a matrix $R\in\gl{n}{q}$ \st $\rho(\hat{T}^R)=\gen{\delta^{d/2},\phi^s\delta^l,\gamma\delta^k}$ for some $l\in\mathbb{Z}$. Notice that this group being abelian means $2l\equiv-k(p^s-1) \mod{d}$. In the same way, since $T=\gen{\delta^{d/2},\phi^s\delta^j,\gamma\delta^k}$ is abelian, it means that $2j\equiv-k(p^s-1) \mod{d}$, but then $2l\equiv 2j \mod{d}$, which means $l\equiv j \mod{d/2}$ and $\rho(\hat{T}^R)=T$. Therefore $\tilde{T}:=\hat{T}^R$ is a $T$-abelian supplement.
\end{proof}

We are now able to prove Theorem \ref{almost} in the linear case.

\begin{thm}\label{pr_psl}
Let $G$ a finite almost simple group with socle $G_0=\psl{n}{q}$. If $G/G_0$ is abelian, then $G$ contains an abelian subgroup $A$ such that $G=AG_0.$
\end{thm}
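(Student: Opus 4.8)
The plan is to combine the reductions of Section~\ref{sec_pre} with the two propositions already proved for $\psl{n}{q}$, $n\ge 3$. By Lemma~\ref{lemma_max} it suffices to exhibit a $T$-abelian supplement for every \emph{maximal} abelian subgroup $T\le\out(G_0)$, and this is exactly equivalent to the theorem. Write $\out(G_0)=\gen{\delta}\rtimes\gen{\phi,\gamma}$ and let $\pi\colon\out(G_0)\to\gen{\phi,\gamma}$ be the projection with kernel $\gen{\delta}$. I would split the argument according to whether the image $\pi(T)$ is cyclic.

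First, suppose $\pi(T)$ is cyclic, say $\pi(T)=\gen{\phi^s\gamma^\varepsilon}$. Since $\pi$ restricted to $T$ is onto a cyclic group with kernel $T\cap\gen{\delta}=\gen{\delta^k}$ for some $k\mid d$, the subgroup $T$ is generated by $\delta^k$ together with a single lift $\phi^s\gamma^\varepsilon\delta^j$ of the generator. If $k=d$ then $\delta^k=1$, so $T\cong\pi(T)$ is cyclic and Lemma~\ref{pr_cyclic} applies; otherwise $k\ne d$ and $T=\gen{\delta^k,\phi^s\gamma^\varepsilon\delta^j}$ is precisely of the form handled in Proposition~\ref{pgammal}. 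This settles all $T$ with cyclic image, in particular every $T$ when $m$ is odd, since then $\gen{\phi,\gamma}$ itself is cyclic.

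Next, suppose $\pi(T)$ is non-cyclic; then $m$ is even and $\pi(T)=\gen{\phi^s}\times\gen{\gamma}$ with $\gen{\phi^s}$ of even order. The structural point is that any $\gamma\delta^k\in T$ must centralise $T\cap\gen{\delta}$: as conjugation by $\gamma$ inverts $\gen{\delta}$, an element $\delta^{k_0}\in T$ commutes with $\gamma\delta^k$ only when $\delta^{2k_0}=1$, so $T\cap\gen{\delta}$ has order at most $2$. If $d$ is even I would enlarge $T$ to $S:=\gen{T,\delta^{d/2}}$; since $p$ is odd one has $(d/2)(p^s-1)\equiv 0\bmod d$, so $\delta^{d/2}$ is centralised by $\phi^s$, and it is centralised by $\gamma$ because $\delta^{-d/2}=\delta^{d/2}$. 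Thus $S$ is abelian of the shape $\gen{\delta^{d/2},\phi^s\delta^j,\gamma\delta^k}$ treated in Proposition~\ref{3gen}, and Lemma~\ref{lemma_max} concludes this case.

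The remaining case, $\pi(T)$ non-cyclic with $d$ odd, is the most delicate; it genuinely occurs (for instance $G_0=\psl{3}{64}$, where $m=6$, $d=3$ and $\aut(G_0)$ does not split). Here $T\cap\gen{\delta}=1$, so $T=\gen{\phi^s\delta^j,\gamma\delta^k}$ is a complement of $\gen{\delta}$ over $\pi(T)$. Because $d$ is odd, $2$ is invertible modulo $d$, so conjugating by a suitable power of $\delta$ (solve $2x+k\equiv 0$) replaces $\gamma\delta^k$ by $\gamma$ while turning $\phi^s\delta^j$ into some $\phi^s\delta^{j'}$; abelianity of the conjugate then forces $2j'\equiv 0\bmod d$, hence $j'\equiv 0$, and $T$ is $\out(G_0)$-conjugate to $\gen{\phi^s,\gamma}$. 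As the field and graph automorphisms commute as genuine automorphisms, $\gen{\phi^s,\gamma}\le\aut(G_0)$ is an abelian supplement for $\gen{\phi^s,\gamma}$, and transporting it back by the same conjugation produces a $T$-abelian supplement. I expect this last case to be the main obstacle, since it is not covered directly by either proposition and hinges on the observation that the diagonal twist can be conjugated away exactly when $d$ is odd, through the interplay of the commutation constraint with the invertibility of $2$ modulo $d$.
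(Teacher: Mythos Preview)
Your argument is correct and follows essentially the same route as the paper: split according to whether $\pi(T)$ is cyclic, invoke Proposition~\ref{pgammal} in the cyclic case, and in the non-cyclic case treat $d$ odd (conjugate away the diagonal twist, then $\gen{\phi^s,\gamma}$ lifts verbatim) and $d$ even (reduce to $\gen{\delta^{d/2},\phi^s\delta^j,\gamma\delta^k}$ and apply Proposition~\ref{3gen}) separately. Your treatment is in fact a bit tidier than the paper's in two places: you explicitly dispose of $k=d$ via Lemma~\ref{pr_cyclic} (the paper invokes Proposition~\ref{pgammal}, whose statement excludes $k=d$), and you handle the $d$ even case uniformly by enlarging to $S=\gen{T,\delta^{d/2}}$ rather than splitting into the $2$- and $3$-generator subcases.

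The one omission is $n=2$: the description $\out(G_0)=\gen{\delta}\rtimes\gen{\phi,\gamma}$ and both propositions you cite assume $n\ge 3$, so you should add a sentence deferring the case $n=2$ to Theorem~\ref{pr_psl2}, exactly as the paper does.
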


\begin{proof} If $n=2$, this follows from Theorem \ref{pr_psl2}, so we can suppose $n\geq3$. The statement in this case is equivalent to finding a $T$-abelian supplement for every abelian $T\leq\out(G_0)$.
	Let $\pi\colon\out(G_0)\to\out(G_0)/\gen{\delta}=\gen{\phi,\gamma}$. If $\pi(T)=\gen{\phi^s\gamma^\varepsilon}$ is cyclic, then $T$ is of the form $T=\gen{\delta^k,\phi^s\gamma^\varepsilon\delta^j}$ with $k\mid d$ and we conclude by Proposition \ref{pgammal}. If $\pi(T)$ is not cyclic, then $\pi(T)=\gen{\phi^{s},\gamma}$. Suppose $d$ is odd. Then $T$ is two generated and of the form $T=\gen{\phi^{s}\delta^j,\gamma\delta^k}$. Since $d$ is odd $\gamma\delta^k$ is conjugate to $\gamma$ in $\gen{\delta,\gamma}$, so, up to conjugation, we can assume $k=0$ and therefore $\delta^j=1$, since $[\phi^s\delta^j,\gamma]=1$; therefore $\tilde{T}:=T$ is a $T$-abelian supplement. Suppose $d$ is even. If $T$ is $2$-generated it is of the form $T=\gen{\phi^{s}\delta^j,\gamma\delta^k}$  and since $\delta^{d/2}\in\Z(\out(G_0))$, it is contained in an abelian subgroup of $\out(G_0)$ of the form $\gen{\delta^{d/2},\phi^{s}\delta^j,\gamma\delta^k}$ and we conclude by Proposition \ref{3gen}. If it is $3$-generated, it is of the form $T=\gen{\delta^{l},\phi^{s}\delta^j,\gamma\delta^k}$ and in order to be abelian we should have $[\delta^l,\gamma\delta^k]=1$, therefore $l=d/2$ and again we conclude by Proposition \ref{3gen}.    
\end{proof}

\renewcommand{\omega}{\om}
\section{Unitary groups}\label{sec_uni}

Throughout this section, $q=p^m$, $\nu$ is a primitive element of the field $\mathbb{F}_{q^2}$ and $\omega:=\nu^{q-1}$ so that $|\omega|=q+1$. Moreover, $G_0:=\psu{n}{q}$, and so $d=(n,q+1)$. Finally, we write $Z:=\Z(\gu{n}{q})$. 

We have
\[
\out(G_0)=\gen{\delta}\sd\gen{\phi},
\]
where $\delta$ is a diagonal automorphism with $|\delta|=d$ and $\phi$ is the field automorphism which raises the coefficients of every matrix to the power $p$. In particular we have $|\phi|=2m$ and $\delta^\phi=\delta^p$.

\

As in the linear case, to prove Theorem \ref{almost} for unitary groups, we use some special matrices defined from some integers $w,l,c	\in\mathbb{Z}$ with $w\geq2$, which play a similar role that the ones in Section \ref{sec_lin}, but with a slightly different definition, where we use $\omega$ in place of $\lambda$: 

\[
A_{w,l}:=\begin{pmatrix}
0 &0 &\dots & 0&(-1)^{w-1}\omega^{l}\\
1 & 0  & \dots &0& 0\\
0 & 1  & \dots &0& 0\\
\vdots& \vdots &\ddots &\vdots &\vdots\\
0 & 0  & \dots&1 & 0
\end{pmatrix}\in\gl{w}{q}
\]
and
\[
X_{w,c}:=\begin{pmatrix}
\omega^{c(w-1)} &0 &\cdots & 0&0\\
0 & \omega^{c(w-2)}  & \cdots &0& 0\\
\vdots & \vdots  & \ddots &\vdots& \vdots\\
0& 0 &\cdots &\omega^{c} &0\\
0 & 0  & \cdots&0 & 1
\end{pmatrix}\in\gl{w}{q}.
\]

Notice that, as in the previous case,

\[
\det{A_{w,l}}=\omega^l
\]

and moreover, keeping in mind that $|\omega|=q+1$, it can be easily checked that $A_{w,l},X_{w,c}\in\gu{w}{q}$ are unitary matrices, as well as the scalar matrix $\omega$ in every dimension. Before proving the main result of this section, we present an analog of Lemma \ref{block}.

\begin{lemma}\label{ublock}
	Let $w, l, c\in\mathbb{Z}$ be integers. Let \[
	A:=A_{w,l}\qquad X:=X_{w,c}.
	\]
	If
	\[cw\equiv l(p^s-1) \mod q+1,\]
	then we have
	\[
	A^{\phi^s X}=\omega^{c}A.
	\]
\end{lemma}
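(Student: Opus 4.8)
The plan is to mirror the proof of Lemma \ref{block}, which is the untwisted analogue, replacing the twisting action $\phi^s\gamma^\varepsilon$ by the single field automorphism $\phi^s$ acting in the unitary setting. Since the target equality $A^{\phi^s X}=\omega^c A$ is an identity between elements of $\gl{w}{q}$, it suffices to verify that the two linear maps $A^{\phi^s X}$ and $\omega^c A$ agree on the canonical basis vectors $\{e_i \mid 1\leq i\leq w\}$. This reduces everything to an explicit computation on each $e_i$, exactly as in Lemma \ref{block}.

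First I would record how $\phi^s$ acts on $A=A_{w,l}$. Here $\phi$ raises entries to the power $p$, so applying $\phi^s$ replaces the single nontrivial off-corner entry $(-1)^{w-1}\omega^l$ by $(-1)^{w-1}\omega^{lp^s}$, leaving the permutation-like $1$'s fixed (they lie in $\F_p$). Thus
\[
A^{\phi^s}=\begin{pmatrix}
0 & 0 & \dots & 0 & (-1)^{w-1}\omega^{lp^s}\\
1 & 0 & \dots & 0 & 0\\
0 & 1 & \dots & 0 & 0\\
\vdots & \vdots & \ddots & \vdots & \vdots\\
0 & 0 & \dots & 1 & 0
\end{pmatrix}.
\]
Note that this is the main place the proof differs from Lemma \ref{block}: there the graph automorphism $\gamma$ produced the extra sign $(-1)^\varepsilon$ in the exponent, whereas here the exponent is simply $lp^s$. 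Correspondingly the congruence hypothesis becomes $cw\equiv l(p^s-1)\pmod{q+1}$ rather than $cw\equiv lp^s(-1)^\varepsilon - l \pmod{q-1}$. The modulus is now $q+1$ because in the unitary case $|\omega|=q+1$, so that $\omega^a=\omega^b$ in $\F_{q^2}^\times$ precisely when $a\equiv b\pmod{q+1}$; this is what makes the congruence the correct one to compare scalar exponents.

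Next I would split the basis computation into the two cases treated in Lemma \ref{block}. Since $X=X_{w,c}$ is diagonal with $Xe_i=\omega^{c(w-i)}e_i$, conjugating by $X$ and using that $A^{\phi^s}e_i=e_{i+1}=Ae_i$ for $i<w$ gives, after cancelling the diagonal scalars $\omega^{c(w-i)}\omega^{-c(w-i-1)}=\omega^c$, the equality $A^{\phi^s X}e_i=\omega^c e_{i+1}=\omega^c A e_i$. For the remaining index $i=w$, one has $A^{\phi^s}e_w=(-1)^{w-1}\omega^{lp^s}e_1$, and pushing this through the conjugation by $X$ produces the factor $\omega^{lp^s-c(w-1)}=\omega^{lp^s-cw+c}$; invoking the hypothesis $cw\equiv l(p^s-1)\pmod{q+1}$ (equivalently $lp^s-cw\equiv l$) collapses this exponent to $l+c$, so $A^{\phi^s X}e_w=(-1)^{w-1}\omega^{l+c}e_1=\omega^c A e_w$. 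Agreement on a basis then forces equality in $\gl{w}{q}$.

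I do not anticipate a genuine obstacle here: the statement is essentially a transcription of Lemma \ref{block} with $\varepsilon$ removed and the modulus switched from $q-1$ to $q+1$, and the author's note preceding the lemma already confirms that the relevant matrices are unitary so that the computation takes place inside $\gu{w}{q}$. The only point demanding care is bookkeeping with the modulus: one must be consistent in reading all $\omega$-exponents modulo $q+1$ rather than $q-1$, since that is the order of $\omega$ in this section, and ensure the congruence is applied in the correct form at the $i=w$ step. Everything else is the routine basis-by-basis verification the author flags with ``Easy computation''-style arguments elsewhere in the paper.
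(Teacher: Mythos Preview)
Your proposal is correct and follows exactly the approach the paper indicates: its proof of Lemma~\ref{ublock} simply says to carry out the same computations as in Lemma~\ref{block}, and you have done precisely that, correctly adjusting the exponent to $lp^s$ (no $\gamma$ present) and the modulus to $q+1$ (since $|\omega|=q+1$ here). The only cosmetic slip is writing $\gl{w}{q}$ where the ambient group is $\gl{w}{q^2}$ (or $\gu{w}{q}$), but you clearly understand this from your closing remarks.
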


\begin{proof}
	To prove the statement is sufficient to carry out the same type of computations of the proof of Lemma \ref{block}.
\end{proof}

We are now ready to prove Theorem \ref{almost} in the case in which $G_0$ is a unitary group.

\begin{prop}\label{pr_psu}
	Let $G$ a finite almost simple group with socle $G_0=\psu{n}{q}$. If $G/G_0$ is abelian, then $G$ contains an abelian subgroup $A$ such that $G=AG_0.$
\end{prop}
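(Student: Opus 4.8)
The plan is to mirror the linear case, reducing to the construction of $T$-abelian supplements for every abelian $T\leq\out(G_0)$, where now $\out(G_0)=\gen{\delta}\rtimes\gen{\phi}$ is metacyclic with $|\delta|=d=(n,q+1)$, $|\phi|=2m$ and $\delta^\phi=\delta^p$. By Lemma \ref{lemma_max} it suffices to treat the maximal abelian subgroups. Projecting modulo $\gen\delta$, any such $T$ has cyclic image $\gen{\phi^s}$, so $T=\gen{\delta^k,\phi^s\delta^j}$ with $k\mid d$. If $k=d$ then $T=\gen{\phi^s\delta^j}$ is cyclic and Lemma \ref{pr_cyclic} applies directly, so the substantive case is $k\neq d$.

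For the case $k\mid d$, $k\neq d$, I would follow the four-step scheme of Proposition \ref{pgammal}, using Lemma \ref{ublock} in place of Lemma \ref{block} throughout. Abelianness of $T$ forces $\delta^k=(\delta^k)^{\phi^s}=\delta^{kp^s}$, i.e. $t\mid(p^s-1)$ where $t:=d/t=d/k$; here, crucially, there is no graph automorphism and hence no $(-1)^\varepsilon$ factor, which should simplify the congruences. First I would dispose of the extreme subcase $t=n$ (forcing $d=n$, $k=1$) by setting $\tilde T:=\gen{A_{n,1},\phi^s X_{n,(p^s-1)/n}}Z/Z$ and invoking Lemma \ref{ublock} with $(w,l,c)=(n,1,(p^s-1)/n)$. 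For $t\neq n$, since $t\mid n$ and $t\neq 1,n$ give $t\geq 2$ and $n-t\geq 2$, I would choose $y$ with $yn\equiv d\pmod{q+1}$ and $(y,t)=1$ (available because $(n/d,(q+1)/d)=1$, adjusting $y$ by a multiple of $(q+1)/d$ times the product of the relevant primes exactly as in Step 1 of Proposition \ref{pgammal}), set $r:=(p^s-1)/t$, and take the block matrices
\[
A:=\begin{pmatrix} A_{t,y} & 0\\ 0 & A_{n-t,k-y}\end{pmatrix},\qquad
X:=\begin{pmatrix} X_{t,yr} & 0\\ 0 & X_{n-t,yr}\end{pmatrix}.
\]
Then $\det A=\omega^y\omega^{k-y}=\omega^k$, so $\rho(AZ)=\delta^k$, and two applications of Lemma \ref{ublock} (to the two diagonal blocks, using $y(p^s-1)=yrt$ and $(k-y)(p^s-1)=yr(n-t)$ modulo $q+1$, the latter via $kt=d\equiv ny$) give $A^{\phi^s X}=\omega^{yr}A$, hence $[A,\phi^s X]\in Z$. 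A Bézout relation $ay+bt=1$ then yields a centralizing matrix $C$ of determinant $\omega$, so that adjusting $X$ by a power of $C$ corrects $\rho(XZ)=\delta^u$ to the required $\delta^j$, and $\tilde T:=\gen{A,\phi^s XC^{j-u}}Z/Z$ is the desired $T$-abelian supplement.

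The main simplification relative to the linear case is that $\out(G_0)$ has no graph-automorphism factor, so only the single generator family $\gen{\delta^k,\phi^s\delta^j}$ occurs and Propositions \ref{3gen} and its conjugacy gymnastics are not needed. The point that requires genuine care, rather than mere transcription, is the unitary constraint: one must verify throughout that the matrices $A_{w,l}$, $X_{w,c}$, $C$, and the scalar $\omega$ all lie in $\gu{n}{q}$ and not merely in $\gl{n}{q}$, which is why $\omega$ was chosen of order $q+1$. I expect the only real obstacle to be checking compatibility of the number-theoretic choice of $y$ modulo $q+1$ (rather than $q-1$) with the unitarity of the resulting matrices; once the analogue of Lemma \ref{ublock} is in hand and the membership in $\gu n q$ is confirmed, the argument is formally identical to Proposition \ref{pgammal}.
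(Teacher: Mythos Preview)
Your proposal is correct and follows essentially the same approach as the paper: the paper's proof also reduces to $T=\gen{\delta^k,\phi^s\delta^j}$, disposes of the case $t=n$ via $\tilde T=\gen{A_{n,1},\phi^s X_{n,(p^s-1)/n}}Z/Z$, and then runs exactly the four-step scheme you describe (choice of $y$ with $(y,t)=1$, block matrices $A$ and $X$, Lemma \ref{ublock} on each block, B\'ezout construction of $C$, and the final adjustment $XC^{j-u}$). The unitarity of $A_{w,l}$, $X_{w,c}$, and the scalar $\omega$ is indeed the only extra verification needed, and the paper dispatches it in a single sentence before the proof, just as you anticipate.
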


\begin{proof}The statement is equivalent to finding a $T$-abelian supplement for every abelian $T\leq\out(G_0)$. The abelian subgroups of $\out(G_0)$ are of the form $T=\gen{\delta^k,\phi^s\delta^j}$ with $k\mid d$, $k\neq1$, and since $T$ is abelian, $t\mid p^s-1$  with $t:=d/k$.
	
	In its structure, this proof resembles the one of Proposition \ref{pgammal} and again is articulated in different steps which focus on one such $T$ and build a $T$-abelian supplement accordingly.
	
	 First, suppose $t=n$, so $t=d=n$ and $k=1$. In this case $T=\gen{\delta,\phi^s}$ and 
	\[
	\tilde T:=\gen{A_{n,1},\phi^s X_{n,\frac{p^s-1}{n}}}Z/Z
	\]
	is a $T$-abelian supplement, since $\rho(A_{n,1}Z)=\delta$ and by applying Lemma \ref{ublock} with $$(w,l,c)=\left(n,1,\frac{ p^s-1}{n}\right),$$ we have 
	\[
	\left[A_{n,1},\phi^s X_{n,\frac{ p^s-1}{n}}\right]\in Z.
	\]
	
	So in the sequel we can suppose $t\neq n$.
	
	\setcounter{step}{0}
	\begin{step}
		We find an integer $y\in\mathbb{Z}$ \st $yn\equiv d \mod{q+1}$ and $(y,t)=1$.
	\end{step}
	Since $\left(\frac{n}{d},\frac{q+1}{d}\right)=1$, there exists $\ov{y}\in\mathbb{Z}$ \st $\ov{y}\frac{n}{d}\equiv 1 \mod \frac{q+1}{d}$. Now let \[
	t=p_1^{\al_1}\dots p_{\tilde l}^{\al_{\tilde l}}p_{\tilde l+1}^{\al_{\tilde l+1}}\dots p_l^{\al_l}
	\] its prime factorization, where we ordered the primes in a way \st $p_i$ divides $\ov{y}$ \ifa $1\leq i\leq \tilde l$.
	
	Let 
	\[
	y=\ov{y}+p_{\tilde l+1}\cdots p_l\frac{q+1}{d}.
	\]
	
	For every $p_i$, we have that $p_i$ does not divide $y$, because if $1\leq i\leq \tilde l$, then $\ov{y}$ is divisible by $p_i$ while $p_{\tilde l+1}\cdots p_l\frac{q+1}{d}$ is not (since $(\ov{y},\frac{q+1}{d})=1$) and if $\tilde l<i\leq l$, $p_i$ divides $p_{\tilde l+1}\cdots p_l\frac{q+1}{d}$ but not $\ov{y}$. Therefore $(y,t)=1$, moreover $y\equiv\ov{y}\mod \frac{q+1}{d}$ and $yn\equiv d\mod{q+1}$.
	
	\begin{step}\label{u:st2}
		We construct matrices $A,X\in\gu{n}{q}$ \st $\det{A}=\omega^k$ and $[A,\phi^s X]\in Z$. 
	\end{step}
	Since $t\mid n$ and $t\neq1,n$, we have that both $t\geq2$ and $n-t\geq2$  and so we can define  
	\[
	A:=\begin{pmatrix}
	A_{t,y} & 0\\
	0 & A_{n-t,k-y}
	\end{pmatrix}\in\gu{n}{q}.
	\]
	
	First, notice that 
	\[
	\det{A}=\det{A_{t,y}}\det{A_{n-t,k-y}}=\omega^y\omega^{k-y}=\omega^k,
	\]
	therefore $\rho(AZ)=\delta^k$.
	
	Let $r:=(p^s-1)/t$ and define
	
	\[
	X:=\begin{pmatrix}
	X_{t,yr} & 0\\
	0 & X_{n-t,yr}
	\end{pmatrix}\in\gu{n}{q},
	\]
	
	We have that 
	
	\[
	y(p^s-1)=yrt \mod{q+1},
	\]
	
	so applying Lemma \ref{ublock} with $(w,l,c)=(t,y,yr)$ we get
	
	\[
	A_{t,y}^{\phi^s X_{t,yr}}=\omega^{yr}A_{t,y}.
	\]
	
	Moreover, recalling that $kt=d\equiv ny \mod{q+1}$, we have that 
	
	\[
	(k-y)(p^s-1)=(k-y)rt=krt-yrt=yr(n-t) \mod{q+1},
	\]
	
	so applying Lemma \ref{ublock} with $(w,l,c)=(n-t,k-y,yr)$ we get
	
	\[
	A_{n-t,k-y}^{\phi^s X_{n-t,yr}}=\omega^{yr}A_{n-t,k-y}.
	\]

	Therefore we have
	\[
	A^{\phi^s X}=\omega^{yr}A
	\]
	or, equivalently,
	\[
	[A,\phi^s X]\in Z.
	\]
	
	\begin{step}\label{u:st3}
		We find a matrix  $C\in\gu{n}{q}$ \st $[A,C]=1$ with $\det{C}=\omega$.
	\end{step}

	Recall that $(y,t)=1$, so there exist $a,b\in\mathbb{Z}$ \st $ay+bt=1$.
	Let 
	\[
	C_0:=\omega^b A_{t,y}^a \in\gu{t}{q}.
	\]
	We have that $[A_{t,y},C_0]=1$ and 
	\[
	\det{C_0}=\det{A_{t,y}}^a\omega^{bt}=\omega^{ay+bt}=\omega.
	\]
	Let 
	\[
	C:=\begin{pmatrix}
	C_0 & 0\\
	0 & 1
	\end{pmatrix}\in\gu{n}{q}.
	\]
	We have $[A,C]=1$ and $\det{C}=\det{C_0}=\omega$.
	\begin{step}
		We complete the proof by constructing a $T$-abelian supplement.
	\end{step}
	
	Let $u\in\mathbb{Z}$ \st $\rho(XZ)=\delta^u$. 
	Combining Steps \ref{u:st2} and \ref{u:st3}, we get
	\[
	[A,\phi^s XC^{j-u}]\in Z,
	\]
	with $\rho(AZ)=\delta^k$ and $\rho(XC^{j-u}Z)=\delta^u\delta^{j-u}=\delta^j $.
	Therefore
	\[
	\tilde T:=\gen{A,\phi^s  XC^{j-u}}Z/Z
	\]
	is a $T$-abelian supplement.
\end{proof}

By Theorem \ref{th_comp} and Table \ref{table_d}, to prove Theorem \ref{almost} we are left to deal with the following cases:
$$
B_n(q),C_n(q) , D_n(q), E_7(q),\ q=p^m, p\not=2
$$
$$
{}^2D_n(q),\ q=p^m, p\not=2, n\ \text{even}
$$
$$
E_6(q),\ q=p^m, q\equiv 1 \mod 3
$$
and
$$
{}^2\!E_6(q),\ q=p^m, q\equiv -1 \mod 3
$$
In the next sections we shall deal with these cases. We start by briefly recalling the notation we shall use.

\section{Notation for groups of Lie type}\label{sec_lie}

For the definitions and automorphisms of simple groups of Lie type we refer  to \cite{Carter} (see also \cite{Steinberg}). We briefly recall that
the Chevalley group (or untwisted group of Lie type) $L(q)$, viewed as a group of automorphisms of a Lie algebra $L_k$ over the field $k=\mathbb{F}_q$ of characteristic $p$, obtained from a complex finite dimensional simple Lie algebra $L$, is the group generated by certain automorphisms $x_r(t)$, where $t$ runs over $\mathbb{F}_q$ and $r$ runs over the root system $\Phi$ associated to $L$. For every $r\in \Phi$, $t\in k^\times$, one defines 
$n_r(t)=x_r(t)x_{-r}(-t^{-1})x_r(t)$, $n_r=n_r(1)$ and the subgroup $N=\<{n_r(t)\mid r\in \Phi, t\in k^\times}$ of $L(q)$.

Let $\Delta=\{\alpha_1,\ldots,\alpha_n\}$ be a system of simple roots of $\Phi$. We shall use the numbering and the description of the simple roots in terms of the canonical basis $(e_1,\ldots, e_k)$ of an appropriate $\mathbb{R}^k$ as in \cite{bourbaki}, Planches I-IX.
We denote by $Q$ the root lattice, by $P$  the weight lattice and by $W$ the Weyl
group; $s_i$ is the simple reflection associated to $\alpha_i$,
$\{\om_1,\ldots,\om_n\}$ are the fundamental weights, $w_0$ is the longest element of $W$, 
$A = (a_{ij})$ is the
Cartan matrix (hence $\alpha_i=\sum_j a_{ij}\om_j$). 

Let $\Hom(Q,\mathbb{F}_q^\times)$ be the group of $\mathbb{F}_q$-characters of $Q$ (i.e. group homomorphisms of $Q$ into $\mathbb{F}_q^\times$). For any $\chi\in \Hom(Q,\mathbb{F}_q^\times)$, one defines the automorphism $h(\chi)$ of $L_k$. Let $\hat H=\{h(\chi)\mid \chi\in \Hom(Q,\mathbb{F}_q^\times)\}$. The map $\chi\mapsto h(\chi)$ is an isomorphism of $\Hom(Q,\mathbb{F}_q^\times)$ onto $\hat H$. We have $\hat H\leq N_{\Aut L_k}(L(q))$. Let $H=\hat H\cap L(q)$. Then $h(\chi)$ lies in $H$ if and only if $\chi$ can be extended to an $\mathbb{F}_q$-character of $P$. The number $d$ in Table \ref{table_d} relative to the untwisted case is the order of $\hat H/H$. We have $H\triangleleft N$ and $N/H\cong W$. For $w\in W$, we denote by $\dot w$ a representative of $w$ in $N$; for each $i=1,\ldots,n$, $n_{\alpha_i}$ is a representative of $s_i$ in $N$. For short we denote
$n_{\alpha_i}$ by $n_i$. Note that $n_i$ lies in $L(p)$, so that it is fixed by field automorphisms of $L(q)$.

We give a short description of the twisted groups. Assume that the Dynkin diagram of $L$ has a non-trivial symmetry $\tau$ of order $s$. We stick to the cases $D_n$, $E_6$ and $s=2$ since we do not need to deal with ${}^3\!D_4(q)$, ${}^2\!B_2(2^r)$, ${}^2\!F_4(2^r)$ and ${}^2G_2(3^r)$. One defines the twisted group ${}^2\!L(q)$ as a certain subgroup of the Chevalley group $L(q^2)$. 
Let $E$ be the real vector space spanned by the roots (or the weights). Then $\tau$ induces an automorphism (in fact an isometry) $\sigma$ of $E$ fixing both $Q$ and $P$. Let $\chi$ be an $\mathbb{F}_{q^2}$-character of $Q$ (or $P$). We say that $\chi$ is {\it self-conjugate} if $\chi(\tau(x))=\chi(x)^q$ for every $x$ in $Q$ (or $P$). Let
$\hat H^1=\{h(\chi)\mid  \chi~:~Q~\to~\mathbb{F}_{q^2}^\times  \ \text{is a self-conjugate character of} \ Q\}$. We have $\hat H^1\leq N_{\Aut L_{\mathbb{F}_{q^2}}}({}^2\!L(q))$. Let $H^1=\hat H^1\cap {}^2\!L(q)$. Then $h(\chi)$ lies in $H^1$ if and only if $\chi$ can be extended to a self-conjugate $\mathbb{F}_{q^2}$-character of $P$. The number $d$ in Table \ref{table_d} relative to the twisted case is the order of $\hat H^1/H^1$.

In general we have $(1-z)P\leq Q$ for every $z\in W$. For Coxeter elements equality holds:
\begin{lemma}\label{Coxeter}
 Let $\alpha_1,\ldots\alpha_n$ be the simple roots (in any fixed order), $\om_1\ldots,\om_n$ the corresponding fundamental weights. Then 
$$
(1-s_1\cdots s_n)\om_i=\alpha_i+z_1\alpha_1+\cdots +z_{i-1}\alpha_{i-1}
$$
with $z_1,\ldots,z_{i-1}\in \mathbb{Z}$. In particular $(1-s_1\cdots s_n)P=Q$.
\end{lemma}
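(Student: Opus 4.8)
The plan is to compute $(1-c)\om_i$ explicitly, where $c:=s_1\cdots s_n$, by tracking the action of the successive simple reflections on $\om_i$. The only input I would use is the standard action of a simple reflection on the fundamental weights: $s_j$ fixes every $\om_i$ with $i\neq j$ and sends $\om_j$ to $\om_j-\alpha_j$. From this and the relation $\alpha_i=\sum_l a_{il}\om_l$ one gets, for free, the action on simple roots, namely $s_j(\alpha_i)=\alpha_i-a_{ij}\alpha_j$, so that $s_j$ alters $\alpha_i$ only by an integer multiple of $\alpha_j$.

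First I would read the product $c=s_1\cdots s_n$ from the right. The reflections $s_n,\dots,s_{i+1}$ all fix $\om_i$; then $s_i$ sends $\om_i$ to $\om_i-\alpha_i$; and the remaining reflections $s_{i-1},\dots,s_1$ again fix $\om_i$. By linearity this yields
$$
c(\om_i)=s_{i-1}\cdots s_1(\om_i-\alpha_i)=\om_i-s_{i-1}\cdots s_1(\alpha_i),
$$
and hence $(1-c)\om_i=s_{i-1}\cdots s_1(\alpha_i)$.

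Next I would prove that $s_{i-1}\cdots s_1(\alpha_i)=\alpha_i+z_1\alpha_1+\cdots+z_{i-1}\alpha_{i-1}$ with $z_1,\dots,z_{i-1}\in\mathbb Z$. The key observation is that each $s_j$ with $j<i$ adds only an integer multiple of $\alpha_j$, so the subgroup generated by $s_1,\dots,s_{i-1}$ preserves the integer span $\Lambda:=\langle\alpha_1,\dots,\alpha_{i-1}\rangle$ and never disturbs the $\alpha_i$-coefficient. Concretely, by induction on the length of the word: if $w(\alpha_i)=\alpha_i+\mu$ with $\mu\in\Lambda$, then for $j<i$ we have $s_jw(\alpha_i)=(\alpha_i-a_{ij}\alpha_j)+s_j(\mu)$, where $a_{ij}\alpha_j\in\Lambda$ and $s_j(\mu)\in\Lambda$ because $s_j$ stabilises $\Lambda$. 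Thus the leading coefficient on $\alpha_i$ stays equal to $1$ and the correction term remains in $\Lambda$, which gives the displayed expression.

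Finally, for the equality $(1-c)P=Q$, the formula expresses the vectors $(1-c)\om_1,\dots,(1-c)\om_n$ in terms of the $\mathbb Z$-basis $\alpha_1,\dots,\alpha_n$ of $Q$ through a lower-triangular integer matrix whose diagonal entries are all $1$. Such a matrix is unimodular, so the vectors $(1-c)\om_i$ themselves form a $\mathbb Z$-basis of $Q$; since the $\om_i$ form a $\mathbb Z$-basis of $P$, the lattice $(1-c)P$ is the $\mathbb Z$-span of the $(1-c)\om_i$, which is exactly $Q$. The only delicate point is the induction in the third paragraph, where one must verify both that the reflections $s_j$ with $j<i$ stabilise $\Lambda$ and that they leave the unit coefficient of $\alpha_i$ untouched; once the weight action is in hand, everything else is routine.
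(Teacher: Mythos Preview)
Your approach is essentially identical to the paper's: both compute $c\om_i$ by using $s_j\om_i=\om_i$ for $j\neq i$, then observe that the remaining reflections send $\alpha_i$ to $\alpha_i$ plus an integer combination of $\alpha_1,\dots,\alpha_{i-1}$, and deduce $(1-c)P=Q$ from the resulting unitriangular transition matrix. One small slip: after $s_i$ has acted, the remaining reflections are applied in the order $s_{i-1}$, then $s_{i-2}$, \dots, then $s_1$, so the correct expression is $c(\om_i)=s_1\cdots s_{i-1}(\om_i-\alpha_i)$, not $s_{i-1}\cdots s_1(\om_i-\alpha_i)$; however, your induction argument applies to any word in $s_1,\dots,s_{i-1}$, so the conclusion is unaffected.
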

\begin{proof} We have $s_i(\om_j)=\om_j-\delta_{ij}\alpha_i$ for every $i$, $j$.
For $i=1$ we have $s_1\cdots s_n\om_1=s_1\om_1=\om_1-\alpha_1$, hence $(1-s_1\cdots s_n)\om_1=\alpha_1$. Let $1<i\leq n$. Then $s_1\cdots s_{i-1}(\alpha_i)=\alpha_i+z_1\alpha_1+\cdots+z_{i-1}\alpha_{i-1}$, with $z_k\in \mathbb{Z}$ for $k=1,\ldots, i-1$. Then
$$
(1-s_1\cdots s_n)\om_i=
\om_i-s_1\cdots s_i\om_i=
\om_i-s_1\cdots s_{i-1}(\om_i-\alpha_i)=
$$
$$
=\om_i-(\om_i-s_1\cdots s_{i-1}\alpha_i)=s_1\cdots s_{i-1}\alpha_i=\alpha_i+z_1\alpha_1+\cdots+z_{i-1}\alpha_{i-1}.\qedhere
$$
\end{proof}

Let $\chi$ be a character of $Q$, $w$ in $W$.
We define the character $w\chi$ in the following way. Let $x\in Q$: we put $w\chi(x):=\chi(w^{-1}x)$, i.e. $w\chi=\chi\circ w^{-1}$.  We also define
$\tau\chi$, where $\tau$ is a graph-automorphism, by $(\tau\chi)x:=\chi(\tau^{-1}x)$ for $x\in Q$ (hence $\tau\chi=\chi\circ \tau^{-1}$). Note that for $w\in W$, we have (\cite[Theorem 7.2.2]{Carter})
$$
\dot w h(\chi)\dot w^{-1}=h(w\chi).
$$
Since we are assuming $\Phi$ of type $D_n$ or $E_6$, there is a Coxeter element $w$ in $W$ fixed by the graph-automorphisms. We may choose a representative $\dot w$ of $w$ in $N$ over the prime field and fixed by the graph-automorphisms. 
Let $F$ be a field or a graph-field automorphism of $L(q)$. Then $F$ fixes $\dot w$ and acts on $\hat H$, hence it induces an automorphism $g$ of $\Hom(Q,k^\times)$, $F(h(\chi))=h(g(\chi))$. Let $\chi:Q\to k^\times$ be a fixed character, $x=\dot w h(\chi)$. We shall look for an element $y=h(\chi')\in \hat H$ such that $[x,Fy]=1$, i.e.
$$
xF y=F yx\iff
y^{-1}F^{-1}xFy=x\iff
y^{-1}F(x)y=x
$$
so that
$$
h(\chi')^{-1}\dot w h(g(\chi))h(\chi')=\dot w h(\chi).
$$
We have
$h(\chi')^{-1}\dot w =\dot w\dot w^{-1}h(\chi')^{-1}\dot w=\dot wh(w^{-1}\chi')^{-1}=\dot wh(-w^{-1}\chi')$, hence
$$
\dot wh(-w^{-1}\chi') h(g(\chi))h(\chi')=\dot w h(\chi)
$$
$$
h(-w^{-1}\chi') h(g(\chi))h(\chi')=h(\chi)
$$
and finally
$$
h((1-w^{-1})\chi')=h((1-g)\chi)\quad,\quad  (1-w^{-1})\chi'=(1-g)\chi
$$
$$
\chi'\circ (1-w)=(1-g)\chi.
$$
We shall be interested in the following cases:

$F:s\mapsto s^{[p^i]}$, then 
$$
\chi'\circ (1-w)=(1-p^i)\chi,
$$

$F:s\mapsto s^{[p]\tau}$ then 
$$
\chi'\circ (1-w)=(1-p\tau)\chi.
$$
\vskip20pt

By Lemma \ref{Coxeter} we have $(1-w)^{-1}Q=P$.
Let $\Delta=|\ \! P/Q\!\ |=\det A$. Then $\Delta P\leq Q$. Note that if $\Phi=D_n$ with $n$ even, then $2 P\leq Q$ since $P/Q\cong C_2\times C_2$ (the inverses of the Cartan matrices my be explicitly found in \cite{WZ}).
We put $\Delta_1=|\ \! P/Q\!\ |$ unless $\Phi=D_n$, $n$ even, in which case we put $\Delta_1=2$. Then
$$
\Delta_1(1-w)^{-1}Q\leq Q
$$
and we may define the character 
$$
\zeta_\chi=\chi\circ \Delta_1(1-w)^{-1}:Q\to k^\times
$$
and $h(\zeta_\chi)\in \hat H$. 

We start with the cases $B_n(q),C_n(q), E_7(q)$.

\section{$C_n(q)$, $B_n(q)$, $n\geq 2$, $E_7(q)$}\label{sec_BCE7}

Here $L$ is of type $C_n$, $B_n$ or $E_7$, $G_0=L(q)$, $q=p^m$,  $d=(q-1,2)$ and we assume that $\aut(G_0)$ does not split over $G_0$, $(\frac{q-1}d,d,m)\not=1$. Therefore $d=2$ and $p$ is odd:
$$
\out(G_0)=\gen{\delta}\times\gen{\phi},
$$
$|\ \! \delta\!\ |=2$, $|\ \! \phi\!\ |=m$. We fix an $\mathbb{F}_q$-character $\chi$ of $Q$ which can not be extended to a character of $P$, so that $h(\chi)$ induces $\delta$ in $\out(G_0)$. We look for an $\mathbb{F}_q$-character $\chi'$ so that
$[\dot w h(\chi),\phi h(\chi')]=1$, i.e.
$$
\chi'\circ (1-w)=(1-p)\chi.
$$

We have $\Delta_1=2$, so  $\zeta_\chi=\chi\circ 2(1-w)^{-1}$. We take
$$
\chi'=\frac{1-p}2\ \zeta_\chi
$$
so $h(\chi')=h(\zeta_\chi)^{\frac{1-p}2}$.
Therefore
$$
\tilde T=\gen{\dot w h(\chi),\phi h(\chi')}
$$
is an $\out(G_0)$-abelian supplement (arguing as in the $\psl{2}{q}$ case).

We have proved
\begin{thm}\label{CnBnE7}
Let $G$ a finite almost simple group with socle $G_0=C_n(q), B_n(q)$ or $E_7(q)$. Then $G$ contains an abelian subgroup $A$ such that $G=AG_0$.
\end{thm}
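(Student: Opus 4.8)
The plan is to establish Theorem \ref{CnBnE7} by exhibiting, in each case $G_0 = C_n(q), B_n(q)$ or $E_7(q)$, an explicit $\out(G_0)$-abelian supplement. Since we are reduced (by Theorem \ref{th_comp}, which tells us $\aut(G_0)$ does not split precisely when $(\frac{q-1}{d},d,m)\neq 1$) to the non-split case, we have $d=2$ and $p$ odd, so that $\out(G_0) = \gen{\delta}\times\gen{\phi}$ is abelian with $|\delta|=2$. By the remark following Lemma \ref{lemma_max}, it suffices to produce a single abelian subgroup $\tilde T \leq \aut(G_0)$ with $\rho(\tilde T) = \out(G_0)$. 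The whole strategy rests on the diagonalization machinery set up in Section \ref{sec_lie}: we realize $\delta$ via an element $\dot w h(\chi)$ built from a Coxeter representative $\dot w$ and a character $\chi$ of $Q$ not extending to $P$, and we realize the field automorphism $\phi$ via $\phi h(\chi')$ for a suitable correcting character $\chi'$.

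First I would fix an $\mathbb{F}_q$-character $\chi$ of the root lattice $Q$ that cannot be extended to a character of the weight lattice $P$; such $\chi$ exists precisely because $d=2$ forces $H \subsetneq \hat H$, and by construction $h(\chi)$ induces $\delta$. The core commutation requirement, as derived in the general computation of Section \ref{sec_lie} for $F\colon s\mapsto s^{[p]}$, is that $[\dot w h(\chi),\phi h(\chi')]=1$ is equivalent to the character identity
$$
\chi'\circ(1-w)=(1-p)\chi.
$$
To solve this for $\chi'$, I would invoke the key lattice fact from Lemma \ref{Coxeter}, namely $(1-w)^{-1}Q = P$ for a Coxeter element $w$, together with $\Delta_1 P \leq Q$ where $\Delta_1 = |P/Q|$. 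Here is the one numerical point that makes the argument work uniformly across $C_n$, $B_n$, and $E_7$: in each of these types $|P/Q| = 2$, so $\Delta_1 = 2$, and hence $\zeta_\chi := \chi\circ 2(1-w)^{-1}$ is a genuine character $Q\to k^\times$ with $h(\zeta_\chi)\in\hat H$.

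The resolution is then to set $\chi' = \frac{1-p}{2}\,\zeta_\chi$, which makes sense as an integer multiple of a character because $p$ is odd so $\frac{1-p}{2}\in\mathbb{Z}$. A direct check gives $\chi'\circ(1-w) = \frac{1-p}{2}\,\chi\circ 2(1-w)^{-1}\circ(1-w) = (1-p)\chi$, as required, so $h(\chi') = h(\zeta_\chi)^{(1-p)/2}$ lies in $\hat H$ and commutes appropriately with $\dot w h(\chi)$ under conjugation by $\phi$. Consequently $\tilde T = \gen{\dot w h(\chi),\,\phi h(\chi')}$ is abelian, and its image under $\rho$ is all of $\out(G_0)$ since $\rho(\dot w h(\chi)) = \delta$ and $\rho(\phi h(\chi'))$ covers the $\phi$-direction — exactly as in the $\psl{2}{q}$ case of Theorem \ref{pr_psl2}, where the commutator landing in the center translates to a commuting pair modulo $G_0$.

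The step I expect to be the genuine substance rather than routine bookkeeping is the verification that $\Delta_1 = 2$ simultaneously handles all three families and that the parity of $p$ exactly compensates: the factor $2$ coming from $|P/Q|$ and the factor $\frac{1-p}{2}$ coming from the Frobenius twist must cancel to return the clean identity $(1-p)\chi$. The remaining ingredients — existence of a Coxeter element fixed by graph symmetries, the action formula $\dot w h(\chi)\dot w^{-1}=h(w\chi)$, and the lattice inclusion $\Delta_1(1-w)^{-1}Q\leq Q$ — are all supplied by Section \ref{sec_lie}, so no new structural input is needed; the argument is essentially a clean specialization of the general Coxeter-element mechanism to the case $|P/Q|=2$.
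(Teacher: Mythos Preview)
Your proposal is correct and follows essentially the same approach as the paper: reduce to the non-split case where $d=2$ and $p$ is odd, fix a non-extendable character $\chi$ so that $h(\chi)$ induces $\delta$, solve $\chi'\circ(1-w)=(1-p)\chi$ via $\chi'=\frac{1-p}{2}\,\zeta_\chi$ using $\Delta_1=2$, and conclude that $\tilde T=\gen{\dot w h(\chi),\phi h(\chi')}$ is an $\out(G_0)$-abelian supplement. The only superfluous remark is the reference to a Coxeter element ``fixed by graph symmetries'': for $B_n$, $C_n$, $E_7$ the Dynkin diagram has no nontrivial symmetry, so this condition is vacuous here and plays no role.
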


\section{$E_6(q)$}\label{sec_E6}

Here $L$ is of type $E_6$, $G_0=L(q)$, $q=p^m$,  $d=(q-1,3)$ and we assume that $\aut(G_0)$ does not split over $G_0$, $(\frac{q-1}d,d,m)\not=1$. Therefore $d=3$ and $p\not=3$:
$$
\out(G_0)=\gen{\delta}\sd\gen{\phi,\tau}
$$
$|\ \! \delta\!\ |=3$, $|\ \! \phi\!\ |=m$, $\delta^\phi=\delta^p$, $\delta^\tau=\delta^{-1}$, $[\phi,\tau]=1$.
We fix an $\mathbb{F}_q$-character $\chi$ of $Q$ which can not be extended to a character of $P$.

Let $\pi\colon\out(G_0)\to\out(G_0)/\gen{\delta}=\gen{\phi,\tau}$. Let $T$ be a non-cyclic abelian subgroup of $\out(G_0)$.
If $\pi(T)$ is not cyclic, then $\pi(T)=\<{\phi^s,\tau}$. Therefore $T= \<{\phi^s\delta^i,\tau\delta^k}$. But $\tau\delta^k$ is conjugate to $\tau$ under $\<\delta$, hence we may assume $T= \<{\phi^s\delta^i,\tau}$, so $T= \<{\phi^s,\tau}\leq \<{\phi,\tau}$ and so $T$ is itself a $T$-abelian supplement.

We are left with $\pi(T)$ is cyclic, $
\pi(T)=\<{\phi^s\tau^\epsilon}
$. Then $T=\<{\delta,\phi^s\tau^\varepsilon}$ and
we distinguish the two cases:

$p\equiv 1 \mod 3$.
Since $[\delta,\phi]=1$, we get 
$\varepsilon=0$, $T\leq\<{\delta,\phi}$, so it is enough to consider
$$
p\equiv 1 \mod 3\ ,\ T= \<{\delta,\phi}\quad\quad\text{case 1}
$$

$p\equiv -1 \mod 3$.
Let $\varepsilon=1$, $T=\<{\delta,\varphi^s\tau}$.  Since $[\delta,\varphi\tau]=1$, $s$ must be odd. Therefore
$T\leq \<{\delta,\varphi\tau,\varphi^2}=\<{\delta,\varphi\tau}$. It is enough to consider
$$
p\equiv -1 \mod 3\ ,\ T=\<{\delta,\varphi\tau}\quad\quad\text{case 2}
$$
If $\varepsilon=0$, $T=\<{\delta,\varphi^s}$, so $s$ is even and again $T\leq \<{\delta,\varphi^2} < \<{\delta,\varphi\tau}$.

\noindent
Summarising, we only have to deal with cases 1, 2.

We consider the Coxeter element $w=s_1s_4s_6s_3s_2s_5$, fixed by the graph-automorphism $\tau$. We choose a representative $\dot w$ of $w$ in $N$ over the prime field and fixed by $\tau$, $\dot w=n_1n_4n_6n_3n_2n_5$ for instance. Hence $\tau \dot w=\dot w\tau$, $\dot w\phi=\phi\dot w$. Here $\phi$ is the field automorphism of $G_0$ sending $x$ to $x^{[p]}$. We use the notation $\phi^{-1}x\phi=x^{[p]}$. We have $\Delta_1=3$, so  $\zeta_\chi=\chi\circ 3(1-w)^{-1}$. 
\subsection{$p\equiv 1\mod 3$, $T=\<{\delta,\varphi}$}
\vskip10pt
We take
$$
\chi'=\frac{1-p}3\ \zeta_\chi
$$
so $h(\chi')=h(\zeta_\chi)^{\frac{1-p}3}$.
Therefore
$$
\tilde T=\<{\dot w h(\chi),\phi h(\chi')}
$$
is a $T$-abelian supplement.

\subsection{$p\equiv -1\mod 3$, $T=\<{\delta, \varphi\tau}$}
\vskip10pt
Since $\tau w_0=-1$, we have
$$
(1+\tau)P=(1+\tau)w_0P=(w_0+\tau w_0)P=(w_0-1)P=(1-w_0)P\leq Q
$$
hence, by Lemma \ref{Coxeter}
$$
(1+\tau)(1-w)^{-1}Q=(1+\tau)P\leq Q
$$
so $\chi\circ(1+\tau)(1-w)^{-1}$ is an $\mathbb{F}_q$-character of $Q$.
We look for an $\mathbb{F}_q$-character $\chi'$ so that
$[\dot w h(\chi),\phi\tau h(\chi')]=1$, i.e.
$$
\chi'\circ (1-w)=(1-p\tau)\chi
$$
We have $1-p\tau=1+p-p-p\tau=1+p-p(1+\tau)$, and we may define
$$
\chi'=\frac{1+p}3\ \zeta_\chi-p\,\chi\circ (1+\tau)(1-w)^{-1}
$$
obtaining a character which isatisfies $\chi'\circ (1-w)=(1-p\tau)\chi$. Therefore
$$
\tilde T=\<{\dot w h(\chi),\phi\tau h(\chi')}
$$
is a $T$-abelian supplement.	

We have proved
\begin{thm}\label{E6}
Let $G$ a finite almost simple group with socle $G_0=E_6(q)$. If $G/G_0$ is abelian, then $G$ contains an abelian subgroup $A$ such that $G=AG_0.$
\end{thm}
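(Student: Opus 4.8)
The plan is to exploit the equivalence established in Section~\ref{sec_pre}: producing an abelian supplement $A$ with $G=AG_0$ amounts to exhibiting a $T$-abelian supplement for every abelian $T\leq\out(G_0)$. First I would dispose of the split case. By Theorem~\ref{th_comp}, $\aut(G_0)$ splits over $G_0$ unless $d=3$ and $(\frac{q-1}{3},3,m)\neq1$; whenever it splits, Lemma~\ref{pr_complement} already yields a supplement for every abelian $T$. So I may assume $\aut(G_0)$ does not split, which forces $d=3$ and $p\neq3$ and fixes the structure $\out(G_0)=\gen{\delta}\rtimes\gen{\phi,\tau}$ with $|\delta|=3$, $\delta^\phi=\delta^p$, $\delta^\tau=\delta^{-1}$, $[\phi,\tau]=1$.

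Next, by Lemma~\ref{lemma_max} it suffices to treat the maximal abelian subgroups $T$. I would stratify these through the projection $\pi\colon\out(G_0)\to\out(G_0)/\gen{\delta}=\gen{\phi,\tau}$. If $\pi(T)$ is non-cyclic then $\pi(T)=\gen{\phi^s,\tau}$ and, after conjugating $\tau\delta^k$ to $\tau$ by an element of $\gen{\delta}$, $T$ lands inside $\gen{\phi,\tau}$, which is abelian and hence its own supplement. If $\pi(T)$ is cyclic, the relations $\delta^\phi=\delta^p$ and $\delta^{\phi\tau}=\delta^{-p}$ together with $p\equiv\pm1\bmod3$ collapse the possibilities to exactly two maximal cases: $p\equiv1\bmod3$ with $T=\gen{\delta,\phi}$, and $p\equiv-1\bmod3$ with $T=\gen{\delta,\phi\tau}$.

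For both cases the mechanism is the Coxeter-element computation of Section~\ref{sec_lie}. I would fix the Coxeter element $w=s_1s_4s_6s_3s_2s_5$, which is fixed by $\tau$, choose a representative $\dot w\in N$ over the prime field with $\tau\dot w=\dot w\tau$ and $\phi\dot w=\dot w\phi$, and take an $\mathbb{F}_q$-character $\chi$ of $Q$ not extending to $P$, so that $\dot w h(\chi)$ induces $\delta$. The task is then to find an $\mathbb{F}_q$-character $\chi'$ of $Q$ making $[\dot w h(\chi),\phi\tau^\epsilon h(\chi')]=1$, which by the conjugation formula $\dot w h(\chi)\dot w^{-1}=h(w\chi)$ reduces to the linear equation $\chi'\circ(1-w)=(1-p\tau^\epsilon)\chi$. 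Since $(1-w)^{-1}Q=P$ by Lemma~\ref{Coxeter} and $\Delta_1=3$ satisfies $3P\leq Q$, the character $\zeta_\chi=\chi\circ3(1-w)^{-1}$ is a genuine $\mathbb{F}_q$-character of $Q$, and in case~1 one simply sets $\chi'=\frac{1-p}{3}\zeta_\chi$.

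The hard part is case~2, where the operator on the right is $1-p\tau$ rather than a scalar, so dividing by $3$ no longer suffices. I would resolve this by the factorisation $1-p\tau=(1+p)-p(1+\tau)$ together with the observation, using $\tau w_0=-1$, that $(1+\tau)(1-w)^{-1}Q=(1+\tau)P=(1-w_0)P\leq Q$; this makes $\chi\circ(1+\tau)(1-w)^{-1}$ an $\mathbb{F}_q$-character of $Q$, and the explicit solution $\chi'=\frac{1+p}{3}\zeta_\chi-p\,\chi\circ(1+\tau)(1-w)^{-1}$ satisfies the required equation. In either case $\tilde T=\gen{\dot w h(\chi),\phi\tau^\epsilon h(\chi')}$ is abelian and projects onto $T$, so it is the desired $T$-abelian supplement, which completes the proof.
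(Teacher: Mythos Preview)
Your proposal is correct and follows essentially the same route as the paper: the same reduction via Lemma~\ref{pr_complement} and Lemma~\ref{lemma_max}, the same case split through $\pi$ into the non-cyclic image (where $T$ conjugates into $\gen{\phi,\tau}$) and the two maximal cyclic-image cases $T=\gen{\delta,\phi}$ for $p\equiv1\bmod3$ and $T=\gen{\delta,\phi\tau}$ for $p\equiv-1\bmod3$, and then the identical Coxeter-element construction with $w=s_1s_4s_6s_3s_2s_5$ and the same formulas $\chi'=\tfrac{1-p}{3}\zeta_\chi$ and $\chi'=\tfrac{1+p}{3}\zeta_\chi-p\,\chi\circ(1+\tau)(1-w)^{-1}$ (the latter justified via $\tau w_0=-1$ exactly as in the paper).
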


\section{${}^2\!E_6(q)$}\label{sec_2E6}	

Here $L$ is of type $E_6$, $G_0={}^2\!E_6(q)\leq E_6(q^2)$, $q=p^m$,  $d=(q+1,3)$ and we assume that $\aut(G_0)$ does not split over $G_0$, $(\frac{q+1}d,d,m)\not=1$. Therefore $d=3$ and $q\equiv -1 \mod 3$, so $p\equiv -1 \mod 3$ and $m$ is odd:
$$
\out(G_0)=\gen{\delta}\sd\gen{\phi}
$$
$|\ \! \delta\!\ |=3$, $|\ \! \phi\!\ |=2m$, $\delta^\phi=\delta^{-1}$.

It is enough to consider the case $T=\< {\delta, \phi^{2}}$.
We fix a self-conjugate $\mathbb{F}_{q^2}$-character $\chi$ of $Q$ which can not be extended to a self-conjugate $\mathbb{F}_{q^2}$-character of $P$ (so that $h(\chi)\in \hat H^1\setminus H^1$).

We consider the same Coxeter element $w=s_1s_4s_6s_3s_2s_5$ as in the previuos section, and the same representative $\dot w=n_1n_4n_6n_3n_2n_5$, which lies in $G_0$.

We look for an element $h(\chi')\in \hat H^1$ so that $[\dot w h(\chi),\phi^2h(\chi')]=1$, i.e.
$$
\chi'\circ (1-w)=(1-p^2)\chi.
$$

We have $\Delta_1=3$, so  $\zeta_\chi=\chi\circ 3(1-w)^{-1}$. We take
$$
\chi'=\frac{1-p^2}3\ \zeta_\chi
$$
so $h(\chi')=h(\zeta_\chi)^{\frac{1-p^2}3}$.

Note that since $\chi$ is self-conjugate and $\tau w=w\tau$, $\zeta_\chi$ and $\chi'$ are self-conjugate, so $h(\chi')$ lies in $\hat H^1$.
Therefore
$$
\tilde T=\gen{\dot w h(\chi),\phi^2 h(\chi')}
$$
is a $T$-abelian supplement.

We have proved
\begin{thm}\label{E6tw}
Let $G$ a finite almost simple group with socle $G_0={}^2\!E_6(q)$. If $G/G_0$ is abelian, then $G$ contains an abelian subgroup $A$ such that $G=AG_0.$
\end{thm}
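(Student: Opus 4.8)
The plan is to transplant the argument of Section~\ref{sec_E6} for untwisted $E_6$ to the twisted setting, working throughout with self-conjugate characters. The first task is a reduction to a single subgroup. Here $\out(G_0)=\gen{\delta}\sd\gen{\phi}$ with $|\delta|=3$, $\delta^\phi=\delta^{-1}$ and $|\phi|=2m$, so $\phi^2$ is central and $\gen{\delta}$ is normal of prime order $3$. For any abelian $T\leq\out(G_0)$ the image of $T$ in the cyclic quotient $\out(G_0)/\gen{\delta}=\gen{\phi}$ is cyclic; if $T\cap\gen{\delta}=1$ this forces $T$ itself to be cyclic, and Lemma~\ref{pr_cyclic} applies, while if $T\cap\gen{\delta}\neq1$ then $\delta\in T$ and hence $T\leq\C_{\out(G_0)}(\delta)=\gen{\delta,\phi^2}$ (the centralizer equals $\gen{\delta,\phi^2}$ because $\phi$ inverts $\delta$ whereas $\phi^2$ centralizes it). By Lemma~\ref{lemma_max} it therefore suffices to construct a $\gen{\delta,\phi^2}$-abelian supplement.

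For that I would realise $\delta$ by $\dot w\,h(\chi)$ and $\phi^2$ by $\phi^2 h(\chi')$. Fix a self-conjugate $\mathbb{F}_{q^2}$-character $\chi$ of $Q$ that does not extend to $P$, so $h(\chi)\in\hat H^1\setminus H^1$ induces $\delta$; let $\dot w=n_1n_4n_6n_3n_2n_5\in G_0$ be the prime-field representative of the Coxeter element $w=s_1s_4s_6s_3s_2s_5$, chosen fixed by the graph automorphism $\tau$. Since $\dot w$ is inner, $\dot w\,h(\chi)$ still induces $\delta$. The computation of Section~\ref{sec_lie} converts the commuting condition $[\dot w\,h(\chi),\phi^2 h(\chi')]=1$ into the character identity
\[
\chi'\circ(1-w)=(1-p^2)\chi.
\]
As $w$ is a Coxeter element, Lemma~\ref{Coxeter} yields $(1-w)P=Q$, so $\Delta_1(1-w)^{-1}$ maps $Q$ into $Q$ with $\Delta_1=|P/Q|=3$; setting $\zeta_\chi=\chi\circ 3(1-w)^{-1}$ one then solves the identity by $\chi'=\tfrac{1-p^2}{3}\,\zeta_\chi$. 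The scalar $\tfrac{1-p^2}{3}$ is an integer exactly because $q\equiv-1\pmod 3$ gives $p\equiv-1\pmod 3$ and hence $p^2\equiv1\pmod 3$.

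The step I expect to be the real obstacle, and the only point where the twisted case genuinely departs from the untwisted one, is checking that the solution $\chi'$ remains admissible for the twisted group, i.e.\ that it is again self-conjugate so that $h(\chi')\in\hat H^1$ rather than merely in the larger torus of $E_6(q^2)$. This is precisely why $w$ is chosen to commute with $\tau$: since $\chi$ is self-conjugate and $\tau w=w\tau$, both $3(1-w)^{-1}$ and multiplication by the integer $\tfrac{1-p^2}{3}$ preserve self-conjugacy, so $\zeta_\chi$ and $\chi'$ are self-conjugate and $\phi^2 h(\chi')$ is a genuine automorphism of $G_0$. Granting this, $\tilde T=\gen{\dot w\,h(\chi),\phi^2 h(\chi')}$ is generated by two commuting elements, hence abelian, and $\rho(\dot w\,h(\chi))=\delta$ together with $\rho(\phi^2 h(\chi'))=\phi^2\delta^j$ for some $j$ gives $\rho(\tilde T)=\gen{\delta,\phi^2}=T$, completing the proof.
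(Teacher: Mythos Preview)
Your proposal is correct and follows essentially the same route as the paper: the same reduction to $T=\gen{\delta,\phi^2}$, the same $\tau$-fixed Coxeter element and representative $\dot w=n_1n_4n_6n_3n_2n_5$, the same solution $\chi'=\tfrac{1-p^2}{3}\zeta_\chi$ of $\chi'\circ(1-w)=(1-p^2)\chi$, and the same verification that self-conjugacy is preserved so that $h(\chi')\in\hat H^1$. Your write-up simply makes explicit the reduction argument and the integrality of $\tfrac{1-p^2}{3}$, which the paper leaves terse.
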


\section{${}^2\!D_n(q)$, $n$ even}	\label{sec_2D}

Here $L$ is of type $D_n$, $n$ even, $G_0={}^2\!D_n(q)\leq D_n(q^2)$, $q=p^m$,  $d=(q+1,2)$ and we assume that $\aut(G_0)$ does not split over $G_0$, $d\not=1$. Therefore $d=2$ and $p\not=2$,
$$
\out(G_0)=\gen{\delta}\times\gen{\phi}
$$
$|\ \! \delta\!\ |=2$, $|\ \! \phi\!\ |=2m$.

It is enough to consider the case $T=\out(G_0)$.
We fix a self-conjugate $\mathbb{F}_{q^2}$-character $\chi$ of $Q$ which can not be extended to a self-conjugate $\mathbb{F}_{q^2}$-character of $P$ (so that $h(\chi)\in \hat H^1\setminus H^1$).

We consider the Coxeter element $w=s_1s_2\cdots s_{n-1}s_n$, fixed by $\tau$ (which exchanges $\a_{n-1}$ and $\a_n$), and the representative $\dot w=n_1n_2\cdots n_{n-1}n_n$, which lies in $G_0$.
We look for an element $h(\chi')\in \hat H^1$ so that $[\dot w h(\chi),\phi h(\chi')]=1$, i.e.
$$
\chi'\circ (1-w)=(1-p)\chi
$$
We have $\Delta_1=2$ (since $n$ is even), so  $\zeta_\chi=\chi\circ 2(1-w)^{-1}$. We take
$$
\chi'=\frac{1-p}2\ \zeta_\chi
$$
so $h(\chi')=h(\zeta_\chi)^{\frac{1-p}2}$.

Since $\chi$ is self-conjugate and $\tau w=w\tau$, $\zeta_\chi$ and $\chi'$ are self-conjugate, so $h(\chi')$ lies in $\hat H^1$.
Therefore
$$
\tilde T=\gen{\dot w h(\chi),\phi h(\chi')}
$$
is an $\out(G_0)$-abelian supplement.

We have proved
\begin{thm}\label{Dntw}
Let $G$ a finite almost simple group with socle $G_0={}^2\!D_n(q)$. Then $G$ contains an abelian subgroup $A$ such that $G=AG_0$.
\end{thm}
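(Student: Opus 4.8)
The plan is to exploit that $\out(G_0)=\gen{\delta}\times\gen{\phi}$ is abelian, so by the remark following Lemma \ref{lemma_max} it suffices to produce a single $\out(G_0)$-abelian supplement, that is, to treat only $T=\out(G_0)$. Following the machinery set up in Section \ref{sec_lie}, I would represent the diagonal generator $\delta$ by an element of the form $\dot w\, h(\chi)$ and the field generator $\phi$ by $\phi\, h(\chi')$, and then force these two representatives to commute; since they project onto a generating set of $\out(G_0)$, the subgroup they generate will be the required abelian supplement.

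Concretely, first I would fix a self-conjugate $\mathbb{F}_{q^2}$-character $\chi$ of $Q$ that does not extend to a self-conjugate character of $P$, so that $h(\chi)\in\hat H^1\setminus H^1$ and hence $h(\chi)$ induces $\delta$. Next I would take the Coxeter element $w=s_1s_2\cdots s_n$, which is fixed by the graph symmetry $\tau$ (the symmetry that swaps $\a_{n-1}$ and $\a_n$), together with a representative $\dot w=n_1n_2\cdots n_n$ lying in $G_0$ and fixed by $\tau$; this guarantees that $\dot w$ commutes with $\phi$. By the character computation carried out in Section \ref{sec_lie}, the commutation relation $[\dot w\, h(\chi),\phi\, h(\chi')]=1$ is then equivalent to the linear equation $\chi'\circ(1-w)=(1-p)\chi$ for characters of $Q$. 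Since $n$ is even we have $\Delta_1=2$, so by Lemma \ref{Coxeter} the map $2(1-w)^{-1}$ sends $Q$ into $Q$ and $\zeta_\chi=\chi\circ 2(1-w)^{-1}$ is a genuine character; as $p$ is odd, $\tfrac{1-p}{2}$ is an integer, and I would simply set $\chi'=\tfrac{1-p}{2}\,\zeta_\chi$, which solves the equation.

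The step I expect to require the most care is verifying that $h(\chi')$ actually lies in $\hat H^1$, that is, that $\chi'$ is again self-conjugate; this is what makes $\phi\, h(\chi')$ an honest automorphism of $G_0$ rather than merely of the ambient group $D_n(q^2)$. It should follow from the fact that $\chi$ is self-conjugate and the chosen Coxeter element satisfies $\tau w=w\tau$, so that $\zeta_\chi$, and therefore $\chi'$, inherit self-conjugacy---this is exactly why one insists on a $\tau$-fixed $w$. Equally essential is the value $\Delta_1=2$ in place of the full $|P/Q|=4$: for $D_n$ with $n$ even one has $P/Q\cong C_2\times C_2$ and hence $2P\leq Q$, which is precisely what renders $\tfrac{1-p}{2}$ integral and lets the construction go through. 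With these points settled, $\tilde T=\gen{\dot w\, h(\chi),\phi\, h(\chi')}$ is the desired $\out(G_0)$-abelian supplement.
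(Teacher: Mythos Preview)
Your proposal is correct and follows essentially the same route as the paper: reduce to $T=\out(G_0)$ since the latter is abelian, pick the $\tau$-fixed Coxeter element $w=s_1\cdots s_n$ with representative $\dot w\in G_0$, solve $\chi'\circ(1-w)=(1-p)\chi$ via $\chi'=\tfrac{1-p}{2}\zeta_\chi$ using $\Delta_1=2$, and invoke $\tau w=w\tau$ to ensure $\chi'$ is self-conjugate so that $h(\chi')\in\hat H^1$. The only thing you leave implicit is the preliminary reduction to the case $n$ even and $p$ odd (the other cases being covered by Theorem~\ref{th_comp} and Lemma~\ref{pr_complement}), but the paper handles that reduction separately as well.
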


In the next sections we shall deal with the remaining cases: $D_n(q)$, $q=p^m$. We shall use the identifications with classical groups as in \cite[Theorem 11.3.2]{Carter} and \cite[1.11, 1.19]{Carter2}. Here $\lambda$ is a generator of $\F_q^\times$.

We have $G_0=P\Omega_{2n}^+(q)$, ${\rm Inndiag}(G_0)=P(CO_{2n}(k)^\circ)$, where $CO_{2n}(k)$ if the conformal orthogonal group, i.e. the group of orthogonal similitudes of $k^{2n}$, $k=\F_q$;  $CO_{2n}(k)^\circ$ is the subgroup of index 2 of $CO_{2n}(k)$ of elements which do not interchange the two  families of maximal isotropic subspaces of $k^{2n}$. If
$(e_1,\ldots,e_n,f_1,\ldots,f_n)$ is the canonical basis of $k^{2n}$, the bilinear form on $k^{2n}$ corresponds to the matrix 
$$
K_n=
\left(
\begin{array}{cccccc}
0_{n}&I_{n}\\
I_{n}&0_{n}\\
\end{array}
\right)
$$
We define the homomorphism $\eta:CO_{2n}(k)^\circ\to k^\times$ by
$$
\eta(X)=\mu \quad \text{if}\quad  {}^tXK_nX=\mu K_n
$$
For $\mu\in k^\times$, let $o_\mu=\left(
\begin{smallmatrix}
I_n&0\\
0&\mu I_n
\end{smallmatrix}
\right)$, so that $\eta(o_\mu)=\mu$.

The graph automorphism $\tau$ of $D_n$ exchanging $\a_{n-1}$ and $\a_n$ is induced by conjugation with
$$
\tau_n=
\left(
\begin{array}{cccccc}
I_{n-1}&0&0_{n-1}&0\\
0&0&0&1\\
0_{n-1}&0&I_{n-1}&0\\
0&1&0&0\\
\end{array}
\right)\in O_{2n}(q)
$$
$\tau_n^2=1$,
$x^\tau=\tau_n x\tau_n$.
$$
x_{\a_i}(z)^\t=x_{\a_i}(z), i=1,\ldots, n-2\quad ,\quad
x_{\a_{n-1}}(z)^\t=x_{\a_n}(z),
x_{\a_n}(z)^\t=x_{\a_{n-1}}(z)
$$
We shall deal with tha cases $n$ odd and even separately.

\section{$D_n(q)$, $n\geq 3$, $n$ odd}\label{sec_Dodd}

Here $L$ is of type $D_n$, $n$ odd, $G_0=D_n(q)$, $q=p^m$,  $d=(4 ,q-1)$ and we assume that $\aut(G_0)$ does not split over $G_0$, hence $( \frac{q^n-1}d,d,m)\not=1$. In particular, $d\not=1$, hence $p$ is odd and $d=2$ or $4$. Moreover $m$ is even, hence $4$ divides $q-1$. Therefore  $d=4$.
$$
\Out(G_0)=\<{\d, \tau, \varphi\mid \d^4=\tau^2=1, \d^\t=\d^{-1}, \varphi^m=[\t,\varphi]=1, \d^\varphi=\d^p}
$$
In $\Omega^+_{2n}(k)$ we choose
$$
\dot w_0=
\left(
\begin{array}{cccccc}
0_{n-1}&0&I_{n-1}&0\\
0&1&0&0\\
I_{n-1}&0&0_{n-1}&0\\
0&0&0&1\\
\end{array}
\right)
$$
a representative of the longest element $w_0$ of the Weyl group.
We have $\dot w_0^2=1$, $\dot w_0\tau_n=\tau_n\dot w_0=K_n$.
Let $X\in CO_{2n}(k)^\circ$, $\eta(X)=\mu$, i.e. ${}^tXK_nX=\mu K_n$. Then
${}^tX=\mu K_nX^{-1}K_n$, so that
\begin{equation}\label{generico}
{}^tX^{-1}=\eta(X)^{-1}\dot w_0 \tau_n X\tau_n \dot w_0=\eta(X)^{-1}\dot w_0  X^\tau \dot w_0
\end{equation}
We start with $D_3$, exploiting the fact that $D_3=A_3$.
Let $k=\F_q$, $K=\overline k$, $V=k^4$ with canonical basis ${\ca B}=(v_1,\ldots,v_4)$, $\overline V=K^4$ with the same basis. Let
$$
\sigma:GL(\overline V)\to GL(\wedge^2 \overline V)\quad,\quad f\mapsto \wedge^2 f 
$$
We choose the basis $\ca B$ for $\overline V$, and the basis ${\ca C}=(v_{12}, v_{13}, v_{23}, v_{34}, v_{42}, v_{14})$, where $v_{ij}=v_i\wedge v_j$, for $\wedge^2 \overline V$. We endow $\wedge^2\overline V$ with the symmetric bilinear form with matrix $K_3$
with respect to ${\ca C}$. Then $\sigma(GL(\overline V))\leq CO(\wedge^2\overline V)^\circ$, $\sigma(GL(V))\leq CO(\wedge^2V)^\circ$ and, by considering bases, we obtain the homomorphism $\sigma:GL_4(k) \to CO_6(k)^\circ$.
 We have
  $$
 \sigma:
\left(
\begin{array}{cc}
I_3&0\\
0&\mu
\end{array}
\right)\mapsto
\left(
\begin{array}{cc}
I_3&0_3\\
0_3&\mu I_3
\end{array}
\right)=o_\mu
$$
in particular
$$
\det \left(
\begin{array}{cc}
I_3&0\\
0&\mu
\end{array}
\right)=\mu=\eta(o_\mu)
$$
Moreover
$
 \sigma:
\mu I_4
\mapsto
\mu^2 I_6
$.
If $X\in GL_4(k)$, $\det X=\mu$, then 
$
X=Y\left(
\begin{smallmatrix}
I_3&0\\
0&\mu
\end{smallmatrix}
\right)
$
with $Y\in SL_4(k)$,
$
\sigma(X)=\sigma(Y)o_\mu$
with $\sigma(Y)\in \Omega_6^+(k)$ (\cite[Theorem 12.20]{Taylor}), hence
\begin{equation}\label{det}
\eta(\sigma(X))=\mu=\det X
\end{equation}
From (\ref {generico}) and (\ref {det}) we get
\begin{equation}\label{comparison}
\sigma({}^tX^{-1})={}^t(\sigma(X))^{-1}=(\det X)^{-1}\dot w_0\sigma(X)^\tau\dot w_0
\end{equation}
For $X$, $Y\in GL_4(k)$, $z\in k^\times$
we get
\begin{equation}\label{equival}
\begin{split}
Y^{-1}X^{[p]} Y = z  X
\quad &\Rightarrow\quad
\sigma(Y)^{-1}\sigma(X)^{[p]} \sigma(Y)= z^2 \sigma(X)\\
Y^{-1}({}^tX^{-1} )Y = z  X
\quad &\Rightarrow\quad
Z^{-1}  \sigma(X)^\tau Z= z^2\det (X) \sigma(X)\ , \ Z=\dot w_0\sigma(Y )
\\
{}^tX^{-1} Y = z Y^{[p]} X
\quad &\Rightarrow\quad
\sigma(X)^\tau Z= z^2 \det(X) Z^{[p]}\sigma(X)\ ,\  Z=\dot w_0\sigma(Y )
\end{split}
\end{equation}
since $\dot w_0^{[p]}=\dot w_0$.

In section \ref{sec_lin}, for a given abelian subgroup $T$ of $\out(\psl{4}{q})$ we have exhibited a $T$-abelian supplement $\tilde T$ by giving matrices in $\gl{4}{q}$: 
the map $\sigma$ allows to solve the problem for $G_0=P\Omega^+_6(q)$, by giving matrices in $CO_{6}(k)^\circ$. Now we consider $D_n$, $n$ odd, $n=1+2m$, $n>3$. The space $k^{2n}$ is the orthogonal direct sum
$k^{2n}=U\oplus U^\perp$, where $U=\<{e_1,\ldots,e_{n-3},f_1,\ldots,f_{n-3}}$, $U^\perp=\<{e_{n-2},e_{n-1},e_n,f_{n-2},f_{n-1},f_n}$, with $\dim U=2n-6=4(m-1)$.
Moreover $U$ is the direct orthogonal sum of subspaces of dimension 4:
$$
U_1=\<{e_1,e_2,f_1,f_2},\ldots, U_{m-1}=\<{e_{n-4}, e_{n-3},f_{n-4},f_{n-3}}
$$
To define an isometry or more generally an orthogonal similitude of $k^{2n}$ we may give matrices 
$X_i\in CO_4(q)^\circ$, $\eta(X_i)=\mu$, $i=1,\ldots,m-1$, $X\in CO_6(q)^\circ$, $\eta(X)=\mu$ and define $Y$ in ${\rm GL}_{2n}(q)$ by
$$
Y=X_1\oplus \cdots \oplus X_{m-1}\oplus X
$$
Then $Y\in CO_{2n}(q)^\circ$, with $\eta(Y)=\mu$. If $Y\in CO_{2n}(q)^\circ$ fixes $U^\perp$, then it fixes $U$ and if we write $Y=X\oplus Z$, with $X\in CO_6(q)^\circ$, $Z\in CO_{2n-6}(q)^\circ$, and consider the action of $\varphi$ and $\tau$, we get
$$
Y^{[p]}=X^{[p]}\oplus Z^{[p]}\quad,\quad
Y^\tau=X^\tau \oplus Z
$$
since $\tau_n$ acts on the basis $(e_1,\ldots,e_n,f_1,\ldots,f_n)$ just switching $e_n$ and $f_n$ (here $Y^\tau =\tau_n Y\tau_n$,
$X^\tau = \tau_3 X\tau_3$).

We shall proceed as follows. Assume $T$ is an abelian subgroup of $\Out (G_0)$. We consider the analogous subgroup $T$ of $\Out (\psl{4}{q})$. From the $\psl{4}{q}$ case, we have an abelian subgroup of $\aut (\psl{4}{q})$ given by explicit matrices in ${\rm GL}_4(q)$. By using $\sigma$ we obtain corresponding matrices in $CO_6(q)^\circ$ satisfying certain relations. For each such matrix $X$ we define a matrix $X_1\in CO_4(q)^\circ$ and finally define the matrix $Y=X_1\oplus\cdots\oplus X_1\oplus X$ in $CO_{2n}(q)^\circ$ ($m-1$ copies of $X_1$). We shall then obtain a $T$-abelian supplement $\tilde T$ in $\Aut (G_0)$.

%\vskip 20pt
Let $A$, $B\in GL_2(k)$ with 
$$
B^{-1}A^{[p]}B=z A\quad , \quad \det A=\mu, \ z=\mu^{\frac12(p-1)}
$$
and let $\nu\in k^\times$.
Our aim is to define orthogonal similitudes of $k^4$ (with respect to the form given by $K_2$). We put
$$
a=a(A)=
\left(
\begin{array}{cc}
A & 0_2 \\
 0_2 & {}^tA^{-1} \\
\end{array}
\right)\left(
\begin{array}{cc}
I_2& 0_2 \\
 0_2 &(\det A) I_2 \\
\end{array}
\right)\in CO_4(q)^\circ \quad,\quad
\eta(a)=\det A
$$
$$
b=b(B,\nu)=
\left(
\begin{array}{cc}
B & 0_2 \\
 0_2 & {}^tB^{-1} \\
\end{array}
\right)\left(
\begin{array}{cc}
I_2& 0_2 \\
 0_2 & \nu I_2 \\
\end{array}
\right)\in CO_4(q)^\circ \quad,\quad
\eta(b)=\nu
$$
From $
B^{-1}A^{[p]}B=\mu^{\frac12(p-1) }A
$
we get
$$
b^{-1}a^{[p]}b=\mu^{\frac12(p-1)} a
\quad,\quad \eta(a)=\det A=\mu, \eta(b)=\nu
$$
We shall take
$$
A=\left(
\begin{array}{cc}
0 & -\mu \\
 1 & 0 \\
\end{array}
\right)\quad,\quad
B=\left(
\begin{array}{cc}
\mu^{\frac12(p-1)}& 0 \\
 0 & 1 \\
\end{array}
\right)\quad,\quad
B^{-1}A^{[p]}B=\mu^{\frac12(p-1)} A
$$
\begin{equation}\label{A}
a=a(A)=a(\mu)=\left(
\begin{array}{cccc}
 0 & -\mu& 0 & 0 \\
 1 & 0 & 0 & 0 \\
 0 & 0 & 0 & -1 \\
 0 & 0 & \mu & 0 \\
\end{array}
\right)\
, \ \eta(a)=\mu
\end{equation}
\begin{equation}\label{B}
b=b(B,\nu)=b(\mu,\nu)=
\left(
\begin{array}{cccc}
 \mu^{\frac{1}{2}(p-1)} & 0 & 0 & 0 \\
 0 & 1 & 0 & 0 \\
 0 & 0 & \mu^{-\frac{1}{2}(p-1)} & 0 \\
 0 & 0 & 0 & 1 \\
\end{array}
\right)
\left(
\begin{array}{cccc}
 1 & 0 & 0 & 0 \\
 0 & 1 & 0 & 0 \\
 0 & 0 & \nu & 0 \\
 0 & 0 & 0 & \nu \\
\end{array}
\right)
\ ,\
\eta(b)=\nu
\end{equation}
Then
$$
b^{-1}a^{[p]}b=\mu^{\frac12(p-1)} a\quad,\quad \eta(a(\mu))=\mu,\ \eta(b(\mu,\nu))=\nu
$$
Note that for any $i\in {\mathbb Z}$ we have
$$
B^{-1}(A^i)^{[p]}B=\mu^{\frac12i(p-1)} A^i\quad,\quad \det A^i=\mu^i
$$
$$
b(\det A,\nu)^{-1}a(A^i)^{[p]}\,b(\det A,\nu)=\mu^{\frac12i(p-1)} a(A^i)
\quad,\quad \eta(a(A^i))=\det A^i=\mu^i, \ \eta(b)=\nu
$$
We shall make use of the explicit matrices in $\gl{4}{q}$ from section \ref{sec_lin}.

\subsection{$p\equiv1 \mod{4}$}
{ {\vskip  0mm plus .5mm } \noindent }

$T=\gen{\delta,\phi}$. In the $\psl{4}{q}$ case
 we took
$$
L=\begin{pmatrix}
0 & 0 & 0 & -\lambda\\
1 & 0 & 0 & 0\\
0 & 1 & 0 & 0\\
0 & 0 & 1 & 0
\end{pmatrix}, 
M=\begin{pmatrix}
\lambda^{\frac{3(p-1)}{4}} & 0 & 0 & 0\\
0 & \lambda^{\frac{2(p-1)}{4}} & 0 & 0\\
0 & 0 & \lambda^{\frac{p-1}{4}} & 0\\
0 & 0 & 0 & 1
\end{pmatrix}
$$
$$
\tilde T=\<{L,\varphi M}Z(\gl{4}{q})/Z(\gl{4}{q})
$$
We have
$
M^{-1}L^{[p]} M=\lambda^{\frac{p-1}{4}}L
$
hence in $CO_6(k)^\circ$, with $\ell=\sigma(L)$, $m=\sigma(M)$,
by (\ref{det}), (\ref{equival}):
$$
m^{-1}\ell^{[p]} m= \lambda^{\frac{p-1}{2}}\ell
$$
$$
\eta(\ell)=\det L=\lambda\quad,\quad
\eta(m)=\det M=\lambda^{\frac{3 (p-1)}{2}}
$$
We look for $a$, $b\in CO_4(k)^\circ$ satisfying the same relations using the above procedure. We take $\mu=\lambda$,  $\nu=\lambda^{\frac{3 (p-1)}{2}}$, $a=a(\lambda)$, $b=b(\lambda,\lambda^{\frac{3 (p-1)}{2}})$: if we put
$A_1=a\oplus\cdots \oplus a \oplus \ell$,
$B_1 =b\oplus\cdots\oplus b\oplus m$
then
$$
A_1, B_1\in CO_{2n}(q)^\circ, 
B_1^{-1}A_1^{[p]}B_1= \lambda^{\frac{p-1}{2}} A_1
$$
and
$$
\tilde T=\<{A_1,\varphi B_1}Z(CO_{2n}(k)^\circ)/Z(CO_{2n}(k)^\circ)
$$
is a $T$-abelian supplement.

 $T=\gen{\delta^2,\phi,\tau}$.
In the $\psl{4}{q}$ case for
$\gen{\delta^2,\phi,\gamma}$ we took
\[
L=\begin{pmatrix}
	0 & -\lambda & 0 & 0\\
	1 & 0 & 0 & 0\\
	0 & 0 & 0 & -\lambda\\
	0 & 0 & 1 & 0
	\end{pmatrix},
	M=\begin{pmatrix}
	\lambda^{\frac{p-1}{2}} & 0 & 0 & 0\\
	0 & 1 & 0 & 0\\
	0 & 0 & \lambda^{\frac{p-1}{2}} & 0\\
	0 & 0 & 0 & 1
	\end{pmatrix},
	N=\begin{pmatrix}
	\lambda^{-1} & 0 & 0 & 0\\
	0 & 1 & 0 & 0\\
	0 & 0 & \lambda^{-1} & 0\\
	0 & 0 & 0 & 1
	\end{pmatrix}
\]
$$
\tilde T=\gen{L,\phi
	M,\gamma
	N}Z/Z
$$
with
$$
M^{-1}L^{[p]}M = z_1 L\ ,\ 
N^{-1}({}^tL^{-1})N = z_2 L\ , \
({}^tM^{-1})N=z_3 N^{[p]} M
$$
$$
z_1=\lambda^{\frac12(p-1)}, z_2= \lambda^{-1}, z_3=1, \det L=\lambda^2, \det M=\lambda^{p-1}, \det N=\lambda^{-2}
$$
Hence in $CO_6(k)^\circ$, with $\ell=\sigma(L)$, $m=\sigma(M)$, $n=\dot w_0\sigma(N)$:
$$
m^{-1}\ell^{[p]}m = \lambda^{p-1} \ell\ , \ 
n^{-1}\ell^\tau n =   \ell\ , \ 
m^\tau n= \lambda^{p-1} n^{[p]} m
$$
$$
\eta(\ell)=\det L=\lambda^2,
\eta(m)=\det M=\lambda^{p-1},
\eta(n)=\eta(\dot w_0)\eta(\sigma(N))=\det N=\lambda^{-2}
$$
Recall that $\tau_n$ acts trivially on $U$, hence we have to define matrices $a$, $b$, $c\in CO_4(q)^\circ$ such that
$$
b^{-1}a^{[p]}b = \lambda^{p-1}a\ , \ 
c^{-1}a c =  a\ , \ 
b c= \lambda^{p-1} c^{[p]} b\quad,\ \text {i.e.} \quad b^{-1}c^{[p]} b=\lambda^{-(p-1)} c
$$
$$
\eta(a)=\lambda^2,
\eta(b)=\lambda^{p-1}, 
\eta(c)=\lambda^{-2}
$$
Once we have solved $b^{-1}a^{[p]}b = \lambda^{p-1}a$ we may take $c=a^{-1}$. We take
$$
a=a(\lambda^2),
b=b(\lambda^2,\lambda^{p-1}), c=a^{-1}
$$
If we put
$A_1=a\oplus\cdots \oplus a \oplus \ell$,
$B_1 =b\oplus\cdots\oplus b\oplus m$,
$C_1=c\oplus\cdots\oplus c\oplus n$
then
$
A_1, B_1, C_1\in CO_{2n}(q)^\circ, 
$
with
$$
B_1^{-1}A_1^{[p]}B_1 = \lambda^{p-1} A_1
\ ,\ 
C_1^{-1}A_1^\tau C_1 =   A_1
\ , \
B_1^\tau C_1= \lambda^{p-1} C_1^{[p]} B_1
$$
$$
\eta(A_1)=\lambda^2,
\eta(B_1)=\lambda^{p-1},
\eta(C_1)=\lambda^{-2}
$$
so that
$$
\tilde T=\<{A_1,\varphi B_1,\tau C_1}Z/Z
$$
is a $T$-abelian supplement.

 $T=\gen{\delta^2,\phi,\tau\delta}$.
In the $\psl{4}{q}$ case for 
$\gen{\delta^2,\phi,\gamma\delta}$ we took
\[
L=\begin{pmatrix}
	0 & -\lambda & 0 & 0\\
	1 & 0 & 0 & 0\\
	0 & 0 & 0 & -\lambda\\
	0 & 0 & 1 & 0
	\end{pmatrix}, M=\begin{pmatrix}
	\lambda^{\frac{p-1}{2}} & 0 & 0 & 0\\
	0 & 1 & 0 & 0\\
	0 & 0 & \lambda^{\frac{p-1}{2}} & 0\\
	0 & 0 & 0 & 1
	\end{pmatrix}\begin{pmatrix}
	0 & -\lambda & 0 & 0\\
	1 & 0 & 0 & 0\\
	0 & 0 & 1 & 0\\
	0 & 0 & 0 & 1
	\end{pmatrix}^{\frac{1-p}{2}},N=
	\begin{pmatrix}
	0 & -1 & 0 & 0\\
	1 & 0 & 0 & 0\\
	0 & 0 & \lambda^{-1} & 0\\
	0 & 0 & 0 & 1
	\end{pmatrix}
\]
$$
\tilde T=\gen{L,\phi
	M,\gamma
	N}Z/Z
$$
with
$$
M^{-1}L^{[p]}M = z_1 L\ ,\ 
N^{-1}({}^tL^{-1})N = z_2 L\ , \
({}^tM^{-1})N=z_3 N^{[p]} M
$$
$$
z_1=\lambda^{\frac12(p-1)}, z_2= \lambda^{-1}, z_3=1, \det L=\lambda^2, \det M=\lambda^{\frac12(p-1)}, \det N=\lambda^{-1}
$$
Hence in $CO_6(k)^\circ$, with $\ell=\sigma(L)$, $m=\sigma(M)$, $n=\dot w_0\sigma(N)$:
$$
m^{-1}\ell^{[p]}m = \lambda^{p-1} \ell
\ , \ 
n^{-1}\ell^\tau n =   \ell
\ , \ 
m^\tau n= \lambda^{\frac12(p-1)} n^{[p]} m
$$
$$
\eta(\ell)=\det L=\lambda^2,
\eta(m)=\det M=\lambda^{\frac12(p-1)},
\eta(n)=\eta(\dot w_0)\eta(\sigma(N))=\det N=\lambda^{-1}
$$
We have to define matrices $a$, $b$, $c\in CO_4(q)^\circ$ such that
$$
b^{-1}a^{[p]}b = \lambda^{p-1}a
\ , \ 
c^{-1}a c =  a
\ , \ 
b c= \lambda^{\frac12(p-1)} c^{[p]} b\quad,\ \text {i.e.} \quad b^{-1}c^{[p]} b=\lambda^{-\frac12(p-1)} c
$$
$$
\eta(a)=\lambda^2,
\eta(b)=\lambda^{\frac12(p-1)}, 
\eta(c)=\lambda^{-1}
$$
Once we have solved $b^{-1}c^{[p]}b = \lambda^{-\frac12(p-1)}c$ we may take $a=c^{-2}$. We take
$$
c=a(\lambda^{-1}),
b=b(\lambda^{-1},\lambda^{\frac12(p-1)}), a=c^{-2}
$$
If we put
$A_1=a\oplus\cdots \oplus a \oplus \ell$,
$B_1 =b\oplus\cdots\oplus b\oplus m$,
$C_1=c\oplus\cdots\oplus c\oplus n$
then
$
A_1, B_1, C_1\in CO_{2n}(q)^\circ, 
$ with
$$
B_1^{-1}A_1^{[p]}B_1 = \lambda^{p-1} A_1
\ , \ 
C_1^{-1}A_1^\tau C_1 =   A_1
\ , \ 
B_1^\tau C_1= \lambda^{\frac12(p-1)} C_1^{[p]} B_1
$$
$$
\eta(A_1)=\lambda^2,
\eta(B_1)=\lambda^{\frac12(p-1)},
\eta(C_1)=\lambda^{-1}
$$
so that
$$
\tilde T=\<{A_1,\varphi B_1,\tau C_1}Z/Z
$$
is a $T$-abelian supplement.

\subsection{$p\equiv-1 \mod{4}$}
{ {\vskip  0mm plus .5mm } \noindent }

$T=\gen{\delta,\phi\tau}$. In the $\psl{4}{q}$ case for
$\gen{\delta,\phi\gamma}$ we took

	\[
		L=\begin{pmatrix}
		0 & 0 & 0 & -\lambda\\
		1 & 0 & 0 & 0\\
		0 & 1 & 0 & 0\\
		0 & 0 & 1 & 0
		\end{pmatrix}, M=
		\begin{pmatrix}
		\lambda^{\frac{3(-p-1)}{4}} & 0 & 0 & 0\\
		0 & \lambda^{\frac{2(-p-1)}{4}} & 0 & 0\\
		0 & 0 & \lambda^{\frac{-p-1}{4}} & 0\\
		0 & 0 & 0 & 1
		\end{pmatrix}
	\]
$$
\tilde T=\<{L,\varphi \gamma M}Z/Z
$$
with
$$
M^{-1}({}^t(L^{[p]})^{-1}) M=\lambda^{-\frac{p+1}{4}}L
\quad,\quad
\det L=\lambda, \det M=\lambda^{-\frac32(p+1)}
$$
Hence in $CO_6(k)^\circ$, with $\ell=\sigma(L)$, $m=\dot w_0\sigma(M)$:
$$
m^{-1}(\ell^{[p]})^\tau m = \lambda^{\frac12(p-1)} \ell
$$
$$
\eta(\ell)=\det L=\lambda,
\eta(m)=\eta(\dot w_0)\eta(\sigma(M))=\det M=\lambda^{-\frac32(p+1)}
$$
We have to define matrices $a$, $b\in CO_4(q)^\circ$ such that
$$
b^{-1}a^{[p]}b = \lambda^{\frac12(p-1)}a
\quad,\quad
\eta(a)=\lambda,
\eta(b)=\lambda^{-\frac32(p+1)}
$$
We take
$$
a=a(\lambda),
b=b(\lambda,\lambda^{-\frac32(p+1)})
$$
If we put
$A_1=a\oplus\cdots \oplus a \oplus \ell$,
$B_1 =b\oplus\cdots\oplus b\oplus m$
then
$
A_1, B_1\in CO_{2n}(q)^\circ, 
$ with
$$
B_1^{-1}(A_1^{[p]})^\tau B_1 = \lambda^{\frac12(p-1)} A_1
\quad,\quad
\eta(A_1)=\lambda,
\eta(B_1)=\lambda^{-\frac32(p+1)}
$$
so that
$$
\tilde T=\<{A_1,\varphi \tau B_1}Z/Z
$$
is a $T$-abelian supplement.

 $T=\gen{\delta^2,\phi,\tau}$.
In the $\psl{4}{q}$ case for
$\gen{\delta^2,\phi, \gamma}$, we took
$$
\tilde T= \<{L,\varphi M, \gamma N}Z/Z
$$
with the same $L$, $M$, $N$ as in the case $p\equiv 1\mod 4$, $T=\gen{\delta^2,\phi, \gamma}$. We define $A_1, B_1, C_1\in CO_{2n}(q)^\circ$ as in this case and $\tilde T=\<{A_1,\varphi B_1,\tau C_1}Z/Z$
is a $T$-abelian supplement.

 $T=\gen{\delta^2,\phi\delta,\tau\delta}$.
In the $\psl{4}{q}$ case for
$\gen{\delta^2,\phi\delta, \gamma\delta}$, we took
$$
\tilde T= \<{L,\varphi M, \gamma N}Z/Z
$$
with the same $L$, $M$, $N$ as in the case $p\equiv 1\mod 4$, $T=\gen{\delta^2,\phi, \gamma\delta}$. Again, we define $A_1, B_1, C_1\in CO_{2n}(q)^\circ$ in the same way and $\tilde T=\<{A_1,\varphi B_1,\tau C_1}Z/Z$
is a $T$-abelian supplement.

We have proved
\begin{thm}\label{pr_Dnodd}
	Let $G$ be an almost simple group with socle $G_0=D_n(q)$, $n$ odd. If $G/G_0$ is abelian, then there exists an abelian subgroup $A$ such that $G=AG_0$.
\end{thm}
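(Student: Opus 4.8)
The plan is to reduce, by Lemma~\ref{lemma_max}, to producing a $T$-abelian supplement only for the \emph{maximal} abelian subgroups $T$ of $\Out(G_0)$, and to invoke Lemma~\ref{pr_complement} to dispose of the split case, so that I may assume $\aut(G_0)$ does not split. As recorded above this forces $d=4$, $p$ odd and $m$ even, with $\Out(G_0)=\langle\delta,\tau,\varphi\rangle$ subject to $\delta^\varphi=\delta^p$, $\delta^\tau=\delta^{-1}$, $[\tau,\varphi]=1$ and $|\delta|=4$. First I would enumerate the maximal abelian subgroups. Since $\langle\delta,\tau\rangle$ is dihedral of order $8$ and $\varphi$ acts on $\delta$ by the $p$-th power, the classification splits according to $p\bmod 4$. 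When $p\equiv1\pmod 4$ one has $[\delta,\varphi]=1$, and the maximal abelian subgroups are $\langle\delta,\varphi\rangle$ together with the two subgroups $\langle\delta^2,\varphi,\tau\delta^k\rangle$ coming from the two $\langle\delta\rangle$-conjugacy classes of graph involutions, with representatives $\tau$ and $\tau\delta$. When $p\equiv-1\pmod 4$ the element commuting with $\delta$ is $\varphi\tau$ rather than $\varphi$, and a parallel count yields $\langle\delta,\varphi\tau\rangle$, $\langle\delta^2,\varphi,\tau\rangle$ and $\langle\delta^2,\varphi\delta,\tau\delta\rangle$. This produces exactly the six cases to be treated.

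The central device is the exceptional isomorphism $D_3\cong A_3$, i.e.\ $P\Omega_6^+(q)\cong\psl{4}{q}$, realised concretely by the second exterior power $\sigma=\wedge^2\colon GL_4(k)\to CO_6(k)^\circ$. The point is that $\sigma$ transports an already constructed $T$-abelian supplement in $\aut(\psl{4}{q})$, given by explicit matrices in $GL_4(q)$, into matrices in $CO_6(k)^\circ$: by~\eqref{det} and~\eqref{equival} the field automorphism is carried to the field automorphism, the inverse-transpose graph automorphism is carried to the $D_n$ graph automorphism $\tau$ after the twist by $\dot w_0$ coming from~\eqref{comparison}, the determinant is carried to the similitude character $\eta$, and each scalar relation $M^{-1}L^{[p]}M=zL$ becomes $m^{-1}\ell^{[p]}m=z^2\ell$. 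Thus the linear-case supplements recorded in Section~\ref{sec_lin} directly furnish orthogonal supplements when $n=3$.

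To pass from $n=3$ to general odd $n$, I would write $k^{2n}$ as an orthogonal direct sum of $(n-3)/2$ hyperbolic $4$-spaces and one hyperbolic $6$-space. On the $6$-space I place the $\sigma$-images $\ell,m,n$ of the $\psl{4}{q}$ generators, and on each $4$-space I place the explicit similitudes $a(\mu)$ and $b(\mu,\nu)$ of~\eqref{A} and~\eqref{B}, chosen to satisfy the same relation $b^{-1}a^{[p]}b=z^2a$ and to carry the same value of $\eta$ as on the $6$-block; a third generator, where present, is taken to be $c=a^{-1}$ or $a=c^{-2}$ as dictated by the relation to be matched. Because $\tau_n$ merely interchanges $e_n$ and $f_n$, it acts trivially on every $4$-block, so the graph relations localise to the $6$-block while the field and diagonal relations act diagonally across the sum. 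Assembling $A_1=a\oplus\cdots\oplus a\oplus\ell$, and similarly $B_1,C_1$, then yields elements of $CO_{2n}(q)^\circ$ whose images modulo the centre generate the desired abelian supplement $\tilde T$ with $\rho(\tilde T)=T$.

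The main obstacle is the uniform bookkeeping of the conformal factor $\eta$ across the summands: to obtain a genuine element of $CO_{2n}(q)^\circ$ all blocks must share a single value of $\eta$, and at the same time the \emph{squared} scalar relations produced by $\sigma$ on the $6$-block must be reproduced verbatim on the $4$-blocks. It is precisely this double constraint that forces the specific exponents of $\lambda$ in $a(\mu)$ and $b(\mu,\nu)$ and the choices $c=a^{-1}$, respectively $a=c^{-2}$, in the three-generator cases. Once these are pinned down, what remains is to check, for each of the six maximal subgroups, that the transported relations make $\tilde T$ abelian and that its image under $\rho$ is $T$; this is the routine but case-heavy part of the argument, and carrying it out explicitly for all six choices of $T$ completes the proof.
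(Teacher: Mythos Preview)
Your proposal is correct and follows essentially the same approach as the paper: reduce to the non-split case, enumerate the six maximal abelian subgroups of $\Out(G_0)$ according to $p\bmod 4$, transport the $\psl{4}{q}$ supplements to $CO_6(q)^\circ$ via $\sigma=\wedge^2$ using \eqref{det}--\eqref{equival}, and extend to $CO_{2n}(q)^\circ$ by padding with the $4$-blocks $a(\mu),b(\mu,\nu)$ of \eqref{A}--\eqref{B}, matching both the conformal factor $\eta$ and the squared scalar relations. Your identification of the key constraint (simultaneous matching of $\eta$ and of the relation $b^{-1}a^{[p]}b=z^2a$) and of the resulting choices $c=a^{-1}$ or $a=c^{-2}$ in the three-generator cases is exactly what the paper does.
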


\section{$D_n(q)$, $n$ even}\label{sec_Deven}	

Here $L$ is of type $D_n$, $n$ even, $G_0=D_n(q)$, $q=p^m$,  $d=(2 ,q-1)^2$ and we assume that $\aut(G_0)$ does not split over $G_0$, hence $( \frac{q^n-1}d,d,m)\not=1$. In particular, $d\not=1$, hence $p$ is odd, $m$ is even and $d=4$, $\hat H/H\cong C_2\times C_2$. 

If $n=4$ 
$$
\out(G_0)=(\<{\delta_1, \delta_2, \delta_3}\times \<\varphi) :\ S_3
$$
$S_3=\<{\rho, \tau}$, $\tau^2=1$, $\rho^3=1$, $\delta_1\delta_2=\delta_3$, $\delta_i^2=\varphi^m=[\rho, \varphi]=[\tau,\varphi]=1$, $\delta_1^\tau=\delta_2$, $\delta_3^\tau=\delta_3$, $
\delta_1^\rho=\delta_2$, $\delta_2^\rho=\delta_3$, $\delta_3^\rho=\delta_1$.

If $n\not=4$
$$
\out(G_0)=(\<{\delta_1, \delta_2, \delta_3}\times \<\varphi) :\<\tau
$$
$\tau^2=1$, $\delta_1\delta_2=\delta_3$, $\delta_i^2=\varphi^m=[\tau,\varphi]=1$, $\delta_1^\tau=\delta_2$, $\delta_3^\tau=\delta_3$.

Note that $(\tau\delta_1)^2=\tau\delta_1\tau\delta_1=\delta_2\delta_1=\delta_3$,  $(\tau\delta_2)^2=\delta_3$, hence
$$
\<{\delta_3,\varphi,\tau\delta_1}=\<{\varphi,\tau\delta_1}=\<{\varphi,\tau\delta_2}
$$
We have to consider the following cases. Assume $T$ is an abelian, non-cyclic subgroup of $ \<{\delta_1,\delta_2,\varphi,\tau}$ (which is $\out(G_0)$ if $n\not=4$).

Let $D=\<{\delta_1,\delta_2}$, $\pi:\out(G_0)\to\out(G_0)/D=\<{\varphi,\tau}$. If $\pi(T)$ is cyclic, then
$
\pi(T)=\<{\varphi^s\tau^\epsilon}
$.
If $\varepsilon=0$, $T\leq\<{D,\varphi^s}$, so
$$
T\leq \<{\delta_1,\delta_2,\varphi}
$$
If $\varepsilon=1$, $T\leq\<{D,\varphi^s\tau}$, and $T$ contains an element $\alpha=\varphi^s\tau \delta$, $\delta\in D$, $\delta\not=1$, so 
either
$T=\<{\delta_3,\varphi^s\tau}$ or $T=\<{\delta_3,\varphi^s\tau\delta_1}=\<{\delta_3,\varphi^s\tau\delta_2}$. In the first case
$$
T\leq \<{\delta_3,\varphi,\tau}
$$
In the second case
$$
T\leq \<{\varphi,\tau\delta_1}=\<{\varphi,\tau\delta_2}
$$

If $\pi(T)$ is not cyclic, then $\pi(T)=\<{\varphi^s,\tau}$. Therefore either
$$
T\leq \<{\delta_3,\varphi,\tau}
$$
or 
$$
T\leq \<{\delta_3,\varphi,\tau\delta_1}=\<{\varphi,\tau\delta_1}=\<{\varphi,\tau\delta_2}
$$
Therefore if $T\leq \<{\delta_1,\delta_2,\varphi,\tau}$, we only have to deal with cases:
\begin{equation*}
\begin{split}
\text{case 1:}&\quad T= \<{\delta_1,\delta_2,\varphi}\\
\text{case 2:}&\quad T= \<{\delta_3,\varphi,\tau}
\\
\text{case 3:}&\quad T= \<{\varphi,\tau\delta_1}=\<{\varphi,\tau\delta_2}
\end{split}
\end{equation*}
\vskip10pt
Assume $n=4$.
Let $M=\<{\delta_1,\delta_2,\varphi}$, $\zeta:\out(G_0)\to\out(G_0)/M=\<{\rho,\tau}$, $T$ an abelian, not cyclic subgroup of $\out(G_0)$, $T$ not contained in $\<{\varphi,\rho,\tau}$ (otherwise we are done, by taking $\tilde T=T$). Hence $\zeta(T)=\{1\}$,  $\<{\rho^i\tau}$ or $\<\rho$. However $\rho^i\tau$ is conjugate to $\tau$, therefore we may assume
$\zeta(T)=\{1\}$, $\<{\tau}$ or $\<\rho$.

If $\zeta(T)=\{1\}$, $\<\tau$ we are in the previous case $T\leq \<{\delta_1,\delta_2,\varphi,\tau}$. We are left with $\zeta(T)=\<\rho$, $T\leq \<{\delta_1,\delta_2,\varphi,\rho}$, so $T=\<{\varphi^s,\varphi^t\rho\delta}$, $\delta\in D$, $\delta\not=1$ since $T$ is abelian and not contained in $\<{\varphi,\rho,\tau}$. It follows that $T\leq \<{\varphi,\rho\delta}$. Moreover, since $\<\rho$ acts transitively on $\{\delta_1,\delta_2,\delta_3\}$ and $[\rho,\varphi]=1$, we may assume
$$
\text{case 4:}\quad T= \<{\varphi,\rho\delta_2}\quad\quad\text{only for $D_4$}
$$
We use the same procedure used to deal with the odd $n$ case. It is convenient to start with $G_0=D_2(q)=P\Omega_4^+(q)\cong \psl{2}{q} \times \psl{2}{q}$.

\vskip10pt
We have
$n_1=\left(
\begin{smallmatrix}
 0 & 1 & 0 & 0 \\
 -1 & 0 & 0 & 0 \\
 0 & 0 & 0 & 1 \\
 0 & 0 & -1 & 0 \\
\end{smallmatrix}
\right)
$,
$n_2=
\left(
\begin{smallmatrix}
 0 & 0 & 0 & 1 \\
 0 & 0 & -1 & 0 \\
 0 & 1 & 0 & 0 \\
 -1 & 0 & 0 & 0 \\
\end{smallmatrix}
\right)
$
in $\Omega^+_4(q)$. Note that $n_2=\tau_2n_1\tau_2$ and $n_1n_2=n_2n_1$.
If
$
g=
\left(
\begin{smallmatrix}
f_1& 0 & 0 & 0 \\
 0 & f_2 & 0 & 0 \\
 0 & 0 & \frac{\mu} {f_1}& 0 \\
 0 & 0 & 0 & \frac{\mu}{f_2} \\
\end{smallmatrix}
\right)
$
is a diagonal matrix in $CO_4(q)^\circ$
then $\alpha_1(g)=\frac{f_1}{f_2}$, $\alpha_2(g)=\frac{f_1f_2}{\mu}$. We define $\delta_1$, $\d_2$, $\d_3$.
Let
$
h_1=
\left(
\begin{smallmatrix}
 1 & 0 & 0 & 0 \\
 0 & \lambda & 0 & 0 \\
 0 & 0 & \lambda & 0 \\
 0 & 0 & 0 & 1 \\
\end{smallmatrix}
\right)
$
in $CO_4(q)^\circ$.
Then $\a_1(h_1)=\lambda^{-1}$, $\a_2(h_1)=1$. We write for short $h_1\mapsto h(\chi_1)\in \hat H$ where $\chi_1=(\lambda^{-1},1)$ is the $k$-character of $Q$ with $\chi_1(\a_1)=\lambda^{-1}$, $\chi_1(\a_2)=1$. We define $\delta_1:=h(\chi_1)G_0$. Moreover $\chi_2:=\chi_1\circ \tau=(1,\lambda^{-1})$, $h_2:=h_1^\tau$, $h_2\mapsto h(\chi_2)\in \hat H$, 
$\delta_2:=h(\chi_2)G_0$; finally $h_3:=h_1h_2$, 
hence
$
h_3\mapsto h(\chi_3)$,
$\chi_3=\chi_1+\chi_2=(\lambda^{-1},\lambda^{-1})$,
$\delta_3:=h(\chi_3)G_0$,
so 
$\delta_3=\delta_1\delta_2$.

Let $
x_1=n_1h_1=
\left(
\begin{smallmatrix}
 0 & \lambda & 0 & 0 \\
 -1 & 0 & 0 & 0 \\
 0 & 0 & 0 & 1 \\
 0 & 0 & -\lambda & 0 \\
\end{smallmatrix}
\right)
$, $
y=
\left(
\begin{smallmatrix}
 \lambda^{p-1} & 0 & 0 & 0 \\
 0 & \lambda^{\frac{p-1}{2}} & 0 & 0 \\
 0 & 0 & 1 & 0 \\
 0 & 0 & 0 & \lambda^{\frac{p-1}{2}} \\
\end{smallmatrix}
\right)$. Then $x_1$, $y$ are in $CO_4(q)^\circ$ with
$$
\eta(x_1)=\eta(h_1)=\lambda\ , \
\eta(y)=\lambda^{p-1}
\ ,\ 
y^{-1}x_1^{[p]} y=\lambda^{\frac{p-1}{2}} x_1\ , \ 
y^\tau=y
$$

We take $x_2:= x_1^\tau=n_2h_2$, Then 
$$
\eta(x_2)=\eta(h_2)=\lambda
\ , \
y^{-1}x_2^{[p]} y=\lambda^{\frac{p-1}{2}} x_2
\ , \
x_1x_2=x_2x_1
$$
We put $x_3:=x_1x_2$. We have $x_3^\tau=x_3$, $\eta(x_3)=\lambda^2$.
 Since $n_1$, $n_2$ are in $\Omega_4^+(q)$, $x_i$ induces $\delta_i$ for $i=1,2,3$. On the other hand, we have
$
y\mapsto h(\chi)$, 
$\chi=(\lambda^{\frac{p-1}{2}},\lambda^{\frac{p-1}{2}})$, 
so
$y$ induces 
$\delta_3$
if $p\equiv -1 \mod 4$ and
the identity if $p\equiv 1\mod 4$.

We are in a position to deal with the 3 cases for $D_2$:

\noindent
Case 1: $T=\<{\d_1, \d_2}\times \<\varphi$.
From the above we have
$$
x_1x_2=x_2x_1\quad,\quad
y^{-1}x_1^{[p]}y=\lambda^{\frac{p-1}{2}}x_1\quad,\quad
y^{-1}x_2^{[p]}y=\lambda^{\frac{p-1}{2}}x_2
$$
hence
$$
\tilde T=\<{x_1,x_2,\varphi y}Z/Z
$$
is a $T$-abelian supplement.

\noindent
Case 2: $T=\<{\d_3}\times \<\varphi\times \<\tau$.
We have 
$$
x_3^\tau=x_3\quad,\quad y^\tau=y\quad,\quad
y^{-1}x_3^{[p]}y=\lambda^{p-1}x_3\quad,\quad
$$
hence
$$
\tilde T=\<{x_3,\varphi y,\tau}Z/Z
$$
is a $T$-abelian supplement.

\noindent
Case 3: $T=\<{\tau\delta_1,\varphi}$.
We have
$$
y^\tau=y\quad,\quad
y^{-1}x_1^{[p]}y=\lambda^{\frac12(p-1)}x_1
$$
hence
$$
\tilde T=\<{\tau x_1,\varphi y}Z/Z
$$
is a $T$-abelian supplement. Note that $y$ induces the identity if $p\equiv 1\mod 4$ and $\delta_3$ if $p\equiv -1\mod 4$, but $x_3=(\tau x_1)^2$.

We now deal with $G_0=D_n(q)$, $n$ even, $n=2m$, $n\geq 4$.
Let $
c_i=\alpha_i, i=1,\ldots,n-2
$,
$$
c_{n-1}=\alpha_{n-1}-(\alpha_1+\alpha_3+\cdots+\alpha_{n-3})\quad,\quad
c_{n}=\alpha_{n}-(\alpha_1+\alpha_3+\cdots+\alpha_{n-3})
$$
Then
$
\frac12c_{n-1}$, $\frac12c_n$ are in $P
$
hence $(c_1,\ldots,c_n)$ is a $\mathbb Z$-basis of $Q$ and $(c_1,\ldots,c_{n-2},\frac12c_{n-1},\frac12c_n)$ is a $\mathbb Z$-basis of $P$. If $\chi:Q\to k^\times$ is a character, then $\chi$ can be extended to a character of $P$ if and only if $\chi(c_{n-1})$ and $\chi(c_n)$ are in $(k^\times)^2$.

We define the characters $\psi_1$, $\psi_2$, $\psi_3:Q\to k^\times$. As usual, $\lambda$ is a generator of $k^\times$.
$$
\psi_1(\alpha_i)=1,\  i=1,\ldots,n-2\ ,\  \psi_1(\alpha_{n-1})=\lambda\ , \ \psi_1(\alpha_{n})=1
$$
hence
$\psi_1(c_{n-1})=\lambda$, $\psi_1(c_{n})=1$. Then we put $\psi_2=\psi_1\circ\tau$, so $\psi_2(c_{n-1})=1$, $\psi_2(c_{n})=\lambda$, and $\psi_3=\psi_1+\psi_2$, so $\psi_3(c_{n-1})=\psi_3(c_{n})=\lambda$. Finally
$\delta_1:=h(\psi_1)G_0$,
$\delta_2:=h(\psi_2)G_0$,
$\delta_3:=h(\psi_3)G_0$, hence $\delta_3=\delta_1\delta_2$. Each $\d_i$ induces the corresponding diagonal automorphism of $D_2(q)$ relative to $\alpha_{n-1}$, $\alpha_n$ (denoted above with the same symbols). 

Let $U=\<{e_1,\ldots,e_{n-2},f_1,\ldots,f_{n-2}}$. Then 
$k^{2n}$ is the orthogonal direct sum
$k^{2n}=U\oplus U^\perp$, $U^\perp=\<{e_{n-1},e_n,f_{n-1},f_n}$, with $\dim U=2n-4=4(m-1)$.
Moreover $U$ is the direct orthogonal sum of subspaces of dimension 4:
$$
U_1=\<{e_1,e_2,f_1,f_2},\ldots, U_{m-1}=\<{e_{n-3}, e_{n-2},f_{n-3},f_{n-2}}
$$
To define an isometry or more generally an orthogonal similitude of $k^{2n}$ we give matrices 
$X_i\in CO_4(q)^\circ$, $\eta(X_i)=\mu$, $i=1,\ldots,m-1$, $X\in CO_4(q)^\circ$, $\eta(X)=\mu$ and define $Y$ in ${\rm GL}_{2n}(q)$ by
$$
Y=X_1\oplus \cdots \oplus X_{m-1}\oplus X
$$
Then $Y\in CO_{2n}(q)^\circ$, with $\eta(Y)=\mu$. If $Y\in CO_{2n}(q)^\circ$ fixes $U^\perp$, then it fixes $U$ and if we write $Y=X\oplus Z$, with $X\in CO_4(q)^\circ$, $Z\in CO_{2n-4}(q)^\circ$, and consider the action of $\varphi$ and $\tau$, we get
$$
Y^{[p]}=X^{[p]}\oplus Z^{[p]}\quad,\quad
Y^\tau=X^\tau \oplus Z
$$
since $\tau$ acts on the basis $(e_1,\ldots,e_n,f_1,\ldots,f_n)$ just switching $e_n$ and $f_n$ (here $Y^\tau =\tau_n Y\tau_n$,
$X^\tau = \tau_2 X\tau_2$). 

We shall proceed as follows. Assume $T$ is an abelian subgroup of $\Out (G_0)$, $G_0$ of type $D_n$ ($T\leq \<{\delta_1,\delta_2,\varphi,\tau}$ if $G_0=D_4(q)$). We consider the analogous subgroup $T$ of $\Out (D_2(q))$. From the $D_2$ case, we have an abelian subgroup of $\aut (D_2(q))$ given by explicit matrices in $CO_4(q)^\circ$. For each such matrix $X$ we define a matrix $X_1\in CO_4(q)^\circ$ and finally define a matrix $Y=X_1\oplus\cdots\oplus X_1\oplus X$ in $CO_{2n}(q)^\circ$ ($m-1$ copies of $X_1$). We shall then obtain a $T$-abelian supplement $\tilde T$ in $\Aut (G_0)$.

\vskip 20pt
Recall the matrices $a(\mu)$, $b(\mu,\nu)$ in $CO_4(q)^\circ$ defined in (\ref{A}), (\ref{B}) and the matrices $x_1$, $x_2$, $x_3$, $y\in CO_4(q)^\circ$ defined to deal with $D_2$. We had
$$
x_1x_2=x_2x_1\quad,\quad
y^{-1}x_1^{[p]}y=\lambda^{\frac{p-1}{2}}x_1\quad,\quad
y^{-1}x_2^{[p]}y=\lambda^{\frac{p-1}{2}}x_2
$$
$\eta(x_1)=\eta(x_2)=\lambda$, $\eta(y)=\lambda^{p-1}$.
$$
x_3^\tau=x_3\quad,\quad y^\tau=y\quad,\quad
y^{-1}x_3^{[p]}y=\lambda^{p-1}x_3
\quad,\quad
\eta(x_3)=\lambda^2
$$
We take $\mu=\lambda$,  $\nu=\lambda^{p-1}$, i.e.
$$
a=a(\lambda)=
\left(
\begin{array}{cccc}
 0 & \lambda& 0 & 0 \\
 -1 & 0 & 0 & 0 \\
 0 & 0 & 0 & 1 \\
 0 & 0 & -\lambda & 0 \\
\end{array}
\right) (=x_1)\ 
, \ \eta(a)=\lambda
$$
$$
b=b(\lambda,\lambda^{p-1})
=\left(
\begin{array}{cccc}
  \lambda^{\frac{1}{2}(p-1)} & 0 & 0 & 0 \\
 0 & 1 & 0 & 0 \\
 0 & 0 &  \lambda^{\frac{1}{2}(p-1)} & 0 \\
 0 & 0 & 0 & \lambda^{p-1} \\
\end{array}
\right)
\ , \
\eta(b)=\lambda^{p-1}
$$
so
$
b^{-1}a^{[p]}b=\lambda^{\frac12(p-1)} a$.
We put
$A_1=\underbrace{a\oplus\cdots \oplus a}_{m-1} \oplus x_1$,
$A_2=A_1^\tau=\underbrace{a\oplus\cdots \oplus a}_{m-1} \oplus x_2$,
$B =\underbrace{b\oplus\cdots\oplus b}_{m-1}\oplus y$.
Then $A_1, A_2, B\in CO_{2n}(q)^\circ$, $\eta(A_1)=\eta(A_2)=\lambda$, $\eta(B)=\lambda^{p-1}$ and
$$
A_1A_2=A_2A_1\quad,\quad
B^{-1}A_1^{[p]}B=\lambda^{\frac{1}{2}(p-1)} A_1\quad,\quad
B^{-1}A_2^{[p]}B=\lambda^{\frac{1}{2}(p-1)} A_2
$$
If moreover $A_3=A_1A_2$, then
$$
A_3^\tau=A_3\quad,\quad
B^\tau=B\quad,\quad
B^{-1}A_3^{[p]}B=\lambda^{p-1}A_3
$$
$\eta(A_3)=\lambda^2$. Moreover
$(\tau A_1)^2=A_1^\tau A_1=A_2A_1=A_3$.

We have (recall that $n=2m$) for $\gamma=\lambda^{\frac12(p-1)}$
$$
(\alpha_i(B))_{i=1,\ldots,n}=(\underbrace{\gamma,\gamma^{-1},\ldots,\gamma,\gamma^{-1}}_{n-4},\gamma,\gamma^{-2},\gamma,\gamma)
$$
$$
c_{n-1}(B)=c_n(B)=\gamma^{2-m}=\lambda^{\frac12(p-1)(2-m)}
$$
so $B$ induces $\delta_3$ if $m$ is odd and $p\equiv -1\mod 4$, and the identity otherwise. 

For $\mu\in k^\times$ let 
$$
h(\mu)=
\left(
\begin{array}{cccc}
 1 & 0 & 0 & 0 \\
0 & \mu & 0 & 0 \\
 0 & 0 & 1 & 0\\
 0 & 0 & 0 & \mu^{-1}  \\
\end{array}
\right)
\left(
\begin{array}{cccc}
 1 & 0 & 0 & 0 \\
 0 & 1 & 0 & 0 \\
 0 & 0 & \mu & 0 \\
 0 & 0 & 0 & \mu \\
\end{array}
\right)=
\left(
\begin{array}{cccc}
 1 & 0 & 0 & 0 \\
 0 & \mu & 0 & 0 \\
 0 & 0 & \mu & 0 \\
 0 & 0 & 0 & 1 \\
\end{array}
\right)
, \eta(h(\mu))=\mu
$$

If $H(\mu)=\underbrace{h(\mu)\oplus\cdots \oplus h(\mu)}_m $ in $CO_{2n}(q)^\circ$, then
$$
(\alpha_i(H(\mu)))_{i=1,\ldots,n}=(\underbrace{\mu^{-1},\mu,\ldots,\mu^{-1},\mu}_{n-2},\mu^{-1},1)
$$
$$
c_{n-1}(H(\mu))=\mu^{m-2}
\quad,\quad
c_n(H(\mu))=\mu^{m-1}
$$
Note that $A_1$ induces the same diagonal automorphism in $\out(G_0)$ as $H(\lambda)$ since $A_1H(\lambda)^{-1}\in N$.
Therefore $A_1$ induces $\delta_1$ if $m$ is odd, $\delta_2$ if $m$ is even. Hence $A_2$ induces $\delta_2$ if $m$ is odd, $\delta_1$ if $m$ is even. It follows that $A_3$ induces $\delta_3$.

\noindent
Case 1: $T=\<{\d_1, \d_2}\times \<\varphi$.  In the $D_2(q)$ case
we took
$
\tilde T=\<{x_1,x_2,\varphi y}Z(CO_{4}(k)^\circ)/Z(CO_{4}(k)^\circ)
$.
Then
$$
\tilde T=\<{A_1,A_2,\varphi B}Z(CO_{2n}(k)^\circ)/Z(CO_{2n}(k)^\circ)
$$
is a $T$-abelian supplement in $\aut(G_0)$.

\noindent
Case 2: $T=\<{\d_3}\times \<\varphi\times \<\tau$.  In the $D_2(q)$ case
we took
$
\tilde T=\<{x_3,\varphi y,\tau}Z/Z
$.
Then
$$
\tilde T=\<{A_3,\varphi B,\tau}Z/Z
$$
is a $T$-abelian supplement in $\aut(G_0)$.

\noindent
Case 3: $T=\<{\tau\delta_1,\varphi}$.  In the $D_2(q)$ case
we took
$
\tilde T=\<{\tau x_1,\varphi y}Z/Z
$.
Then
$$
\tilde T=\<{\tau A_1,\varphi B}Z/Z
$$
is a $T$-abelian supplement in $\aut(G_0)$.

We finally deal with the last case

\noindent
Case 4: $T=\<{\varphi,\rho\delta_2}$, only for $D_4(q)$. We have defined the matrices $A_1$, $B$ in $CO_{2n}(q)^\circ$: in the case $n=4$ they are
$$
A_1=
\left(
\begin{smallmatrix}
 0 & \lambda & 0 & 0 & 0 & 0 & 0 & 0 \\
 -1 & 0 & 0 & 0 & 0 & 0 & 0 & 0 \\
 0 & 0 & 0 & \lambda & 0 & 0 & 0 & 0 \\
 0 & 0 & -1 & 0 & 0 & 0 & 0 & 0 \\
 0 & 0 & 0 & 0 & 0 & 1 & 0 & 0 \\
 0 & 0 & 0 & 0 & -\lambda & 0 & 0 & 0 \\
 0 & 0 & 0 & 0 & 0 & 0 & 0 & 1 \\
 0 & 0 & 0 & 0 & 0 & 0 & -\lambda & 0 \\
\end{smallmatrix}
\right)\ , \
B=
\left(
\begin{smallmatrix}
 \lambda^{\frac{p-1}{2}} & 0 & 0 & 0 & 0 & 0 & 0 & 0 \\
 0 & 1 & 0 & 0 & 0 & 0 & 0 & 0 \\
 0 & 0 & \lambda^{p-1} & 0 & 0 & 0 & 0 & 0 \\
 0 & 0 & 0 & \lambda^{\frac{p-1}{2}} & 0 & 0 & 0 & 0 \\
 0 & 0 & 0 & 0 & \lambda^{\frac{p-1}{2}} & 0 & 0 & 0 \\
 0 & 0 & 0 & 0 & 0 & \lambda^{p-1} & 0 & 0 \\
 0 & 0 & 0 & 0 & 0 & 0 & 1 & 0 \\
 0 & 0 & 0 & 0 & 0 & 0 & 0 & \lambda^{\frac{p-1}{2}} \\
\end{smallmatrix}
\right)
$$
We have
$$
A_1=n_1n_3
\left(
\begin{smallmatrix}
 1 & 0 & 0 & 0 & 0 & 0 & 0 & 0 \\
 0 & \lambda & 0 & 0 & 0 & 0 & 0 & 0 \\
 0 & 0 & 1 & 0 & 0 & 0 & 0 & 0 \\
 0 & 0 & 0 & \lambda & 0 & 0 & 0 & 0 \\
 0 & 0 & 0 & 0 & 1 & 0 & 0 & 0 \\
 0 & 0 & 0 & 0 & 0 & \frac{1}{\lambda} & 0 & 0 \\
 0 & 0 & 0 & 0 & 0 & 0 & 1 & 0 \\
 0 & 0 & 0 & 0 & 0 & 0 & 0 & \frac{1}{\lambda} \\
\end{smallmatrix}
\right)
\left(
\begin{smallmatrix}
 1 & 0 & 0 & 0 & 0 & 0 & 0 & 0 \\
 0 & 1 & 0 & 0 & 0 & 0 & 0 & 0 \\
 0 & 0 & 1 & 0 & 0 & 0 & 0 & 0 \\
 0 & 0 & 0 & 1 & 0 & 0 & 0 & 0 \\
 0 & 0 & 0 & 0 & \lambda & 0 & 0 & 0 \\
 0 & 0 & 0 & 0 & 0 & \lambda & 0 & 0 \\
 0 & 0 & 0 & 0 & 0 & 0 & \lambda & 0 \\
 0 & 0 & 0 & 0 & 0 & 0 & 0 & \lambda \\
\end{smallmatrix}
\right)=n_1n_3 H(\lambda)
$$
In $P(CO_8(q)^\circ)=G_0\hat H$ we obtain the elements
$$
A_1\mapsto
n_1n_3 h(\xi_1)\in G_0\hat H\ , \ 
B\mapsto h(\xi)\in \hat H
$$
where $
\xi_1
$ is the $\F_q$-character of $Q$
$$
\a_1\mapsto \lambda^{-1},
\a_2\mapsto \lambda,
\a_3\mapsto \lambda^{-1},
\a_4\mapsto 1\quad
$$
In particular
$
c_3\mapsto 1$,
$c_4\mapsto \lambda
$
so that $n_1n_3 h(\xi_1)$ induces $\delta_2$ in $\out{G_0}$, while $\xi$ is the $\F_q$-character of $Q$
$$
\a_1\mapsto \lambda^{\frac{p-1}{2}},
\a_2\mapsto \lambda^{1-p},
\a_3\mapsto \lambda^{\frac{p-1}{2}},
\a_4\mapsto \lambda^{\frac{p-1}{2}}
$$
In particular
$c_3\mapsto 1$,
$c_4\mapsto 1
$, so $\xi$
can be extended to a character of $P$, hence $h(\xi)\in H$.
From
$
B^{-1}A_1^{[p]}B=\lambda^{\frac{1}{2}(p-1)} A_1
$ 
we get
$
[\varphi h(\xi),n_1n_3 h(\xi_1)]=1
$.
Moreover $h(\xi)^\rho=h(\xi)$, hence
$$
\tilde T=\<{\varphi h(\xi),\rho n_1n_3 h(\xi_1)} 
$$
is a $T$-abelian supplement in $\aut(G_0)$.

We have proved
\begin{thm}\label{pr_Dneven}
	Let $G$ be an almost simple group with socle $G_0=D_n(q)$, $n$ even. If $G/G_0$ is abelian, then there exists an abelian subgroup $A$ such that $G=AG_0$.
\end{thm}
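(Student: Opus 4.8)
The plan is to reduce the theorem, exactly as in Section~\ref{sec_pre}, to exhibiting a $T$-abelian supplement for every abelian $T\leq\out(G_0)$. First I would dispose of the split case: if $(\frac{q^n-1}{d},d,m)=1$ then $\aut(G_0)$ splits over $G_0$ by Theorem~\ref{th_comp}, and Lemma~\ref{pr_complement} finishes. So I may assume the non-split case, which forces $p$ odd, $m$ even and $d=4$ with $\hat H/H\cong C_2\times C_2$, and gives the presentation of $\out(G_0)$ recorded above. Cyclic subgroups are handled by Lemma~\ref{pr_cyclic}, and by Lemma~\ref{lemma_max} it suffices to treat maximal abelian subgroups. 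The lattice computation performed above, via the projection $\pi\colon\out(G_0)\to\<{\varphi,\tau}$ (and, for $D_4$, the further projection $\zeta$ onto $\<{\rho,\tau}$), shows that every non-cyclic abelian $T$ is contained in one of the four explicit subgroups of cases~1--4. Hence I only need a supplement for each of $\<{\delta_1,\delta_2,\varphi}$, $\<{\delta_3,\varphi,\tau}$, $\<{\varphi,\tau\delta_1}$ and (for $D_4$ only) $\<{\varphi,\rho\delta_2}$.

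The heart of the argument is the reduction to rank two via the exceptional isomorphism $D_2(q)=P\Omega_4^+(q)\cong\psl{2}{q}\times\psl{2}{q}$, realised inside the conformal orthogonal group. I would first solve each case in $CO_4(q)^\circ$ using the matrices $x_1,x_2,x_3,y$ (built from the auxiliary $a(\mu),b(\mu,\nu)$), which satisfy $x_1x_2=x_2x_1$, $y^{-1}x_i^{[p]}y=\lambda^{(p-1)/2}x_i$ for $i=1,2$, $x_3^\tau=x_3$ and $y^\tau=y$, and which induce $\delta_1,\delta_2,\delta_3$ in $\out(D_2(q))$. Then I would promote these to $CO_{2n}(q)^\circ$ through the orthogonal decomposition $k^{2n}=U\oplus U^\perp$ with $U^\perp=\<{e_{n-1},e_n,f_{n-1},f_n}$, setting $A_j=a\oplus\cdots\oplus a\oplus x_j$ (with $m-1$ copies of $a=a(\lambda)$) and $B=b\oplus\cdots\oplus b\oplus y$. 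Since $\tau_n$ acts only on $U^\perp$, switching $e_n$ and $f_n$, and trivially on $U$, while $\varphi$ acts coordinatewise, every relation among $x_1,x_2,x_3,y$ lifts verbatim to $A_1,A_2,A_3,B$. The required supplements are then $\<{A_1,A_2,\varphi B}Z/Z$ for case~1, $\<{A_3,\varphi B,\tau}Z/Z$ for case~2, $\<{\tau A_1,\varphi B}Z/Z$ for case~3, and $\<{\varphi h(\xi),\rho\,n_1n_3\,h(\xi_1)}$ in case~4.

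The hard part will be the bookkeeping of which outer automorphism each matrix actually induces, since with $\hat H/H\cong C_2\times C_2$ an element of $\hat H$ lies in $H$ precisely when the associated character of $Q$ extends to $P$. Using the $\mathbb{Z}$-basis $c_1,\dots,c_{n-2},\tfrac12 c_{n-1},\tfrac12 c_n$ of $P$, that extension condition amounts to the character taking square values on $c_{n-1}$ and $c_n$, so I must compute $c_{n-1}(B),c_n(B)$ and the analogous values for $A_1,A_2,A_3$ explicitly. These computations show that $A_1,A_2$ induce $\delta_1,\delta_2$ (interchanged according to the parity of $m$), that $A_3$ induces $\delta_3$, and that $B$ induces $\delta_3$ or the identity depending on the parity of $m$ and the residue of $p\bmod 4$; these are exactly the facts needed to match the prescribed images. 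I expect the genuinely delicate point to be case~4 for $D_4$, where $\rho$ cyclically permutes $\delta_1,\delta_2,\delta_3$: there I must verify from the explicit characters $\xi,\xi_1$ that $h(\xi)\in H$ (so $\varphi h(\xi)$ induces $\varphi$), that $n_1n_3\,h(\xi_1)$ induces $\delta_2$ (whence $\rho\,n_1n_3\,h(\xi_1)$ induces $\rho\delta_2$), and that $h(\xi)^\rho=h(\xi)$, so that the two generators commute and $\<{\varphi h(\xi),\rho\,n_1n_3\,h(\xi_1)}$ is indeed a $T$-abelian supplement.
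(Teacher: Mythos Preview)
Your proposal is correct and follows essentially the same approach as the paper: the reduction to the non-split case, the lattice analysis yielding the four maximal abelian cases, the solution first in $CO_4(q)^\circ$ via the explicit matrices $x_1,x_2,x_3,y$, the block-diagonal promotion $A_j=a\oplus\cdots\oplus a\oplus x_j$, $B=b\oplus\cdots\oplus b\oplus y$ to $CO_{2n}(q)^\circ$, the bookkeeping via $c_{n-1},c_n$, and the treatment of case~4 for $D_4$ through the characters $\xi,\xi_1$ are all exactly as in the paper. One small wording point: in the paper $x_1,y$ are defined directly from $n_1,h_1$ and a diagonal matrix, and only afterwards is it observed that $a(\lambda)=x_1$; they are not built from $a(\mu),b(\mu,\nu)$ but coincide with particular choices of them.
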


This completes the proof of Theorem \ref{almost}.

\section{Proof of Corollary \ref{fittrivial}}\label{sec_corollary}

In the following we will denote by $F(G)$ and $F^*(G)$ respectively, the Fitting subgroup and the generalized Fitting subgroup of $G.$

%\begin{cor}	Let $G$ be a finite group and suppose that $F(G)=1$. Let  $N=\soc(G).$  If $a, b$ are two elements of $G$ and $[a,b]\in N,$ then there exist $n, m \in N$ such that $[an,bm]=1.$\end{cor}
\begin{proof}[Proof of Corollary \ref{fittrivial}]Notice that $F(G)=1$ implies $N=\soc(G)=F^*(G).$ Let
	$H=\langle a, b, N\rangle.$ If $M$ is a minimal normal subgroup of
	$H,$ then either $M \leq N$ or $M\cap N=1.$ However in the second case we would have $M\leq C_G(N)=C_G(F^*(G))=Z(F^*(G))=1,$ a contradiction. This implies $N=\soc(H)=F^*(H),$ and therefore it is not restrictive to assume $G=\langle a, b, N\rangle.$ 
	
	We decompose $N=N_1\times \dots \times N_u$ as a product of minimal normal subgroups of $G$ and for $1\leq i\leq u$ we denote by $\rho_i: G\to \aut(N_i)$ the map induced by the conjugation action of $G$ on $N_i.$ The map $\rho: G \to \prod_{1\leq i \leq u}\aut(N_i)$ which sends $g$ to
	$(g^{\rho_1},\dots,g^{\rho_u})$	is an injective homomorphism, since
	$\ker \rho= \bigcap_{1\leq i\leq u}C_G(N_i)=C_G(N)=1.$
	If $u\neq 1,$ then by induction there exist $n_i, m_i \in N_i$ such that $[(an_i)^{\rho_i},(bm_i)^{\rho_i}]=1.$ But then, setting
	$n=(n_1,\dots,n_u)$ and $m=(m_1,\dots,m_u),$ we have that
	$[(an)^\rho,(bm)^\rho]=1,$ and consequently, since $\rho$ is injective, $[an,bm]=1.$

	Hence it is not restrictive to assume that $N$ is a minimal normal subgroup of $G=\langle a, b, N\rangle.$ Write $N=S_1\times \dots \times S_t,$ where $S_1,\dots,S_t$ are isomorphic non-abelian simple groups, and let
	$X=N_G(S_1)/C_G(S_1).$ We may identify $G$ with a subgroup of $X \wr \perm(t),$
	so $a=x\sigma,$ $b=y\tau,$ with $x,y \in X^t$ and $\langle \sigma, \tau \rangle$ is an abelian regular subgroup of $\perm(t).$ Notice that
	$$\frac{X}{S_1} \cong \frac{N_G(S_1)/C_G(S_1)}{S_1C_G(S_1)/C_G(S_1)}\cong \frac{N_G(S_1)}{S_1C_G(S_1)}.
	$$
	Since $S_1C_G(S_1)\geq N,$ it follows that $X/S_1$ is isomorphic to a section of $G/N.$ Since $G/N$ is an abelian group, $X/S_1$ is abelian and therefore by Theorem \ref{almost} there exists an abelian subgroup $Y$ of $X$ such that
	$X=YS_1$. Then it is not restrictive to assume $\langle a, b \rangle \leq Y \wr \langle \sigma ,\tau \rangle.$ Let $K=\langle a, b\rangle$ and $Z=Y\cap S_1.$ The group $KZ^t/Z^t$ is abelian and we have reduced our problem to find 
	$n, m \in Z^t$ such that  $\langle xn\sigma, ym\tau \rangle$ is abelian. We have
	\begin{align*}
	[xn\sigma,ym\tau]&=[xn\sigma,\tau][xn\sigma,ym]^\tau=[xn,\tau]^\sigma[\sigma,\tau][xn,ym]^{\sigma\tau}[\sigma,ym]^\tau\\&=[xn,\tau]^\sigma [\sigma,ym]^\tau
	=[x,\tau]^\sigma[\sigma,y]^\tau[n,\tau]^\sigma[\sigma,m]^\tau.
	\end{align*}
	Since $[n,\tau]^\sigma  [\sigma,m]^\tau=[n^\sigma,\tau][\sigma,m^\tau],$
	we are looking for $n, m\in Z^t$ such that
	$$[x,\tau]^\sigma[\sigma,y]^\tau=[x\sigma,y\tau]=[\tau,n^\sigma][m^\tau, \sigma].$$
	Notice that $[x\sigma,y\tau]=(z_1,\dots,z_t)\in Z^t,$ with $z_1z_2\cdots z_t=1.$ Let $$\Lambda:=\{(z_1,\dots,z_t)\in Z^t\mid 
	z_1z_2\cdots z_t=1\}.$$
	In order to conclude our proof, it suffices to prove that for every $(z_1,\dots,z_t)\in \Lambda$  there exist $\tilde n, \tilde m \in Z^t$ such that $(z_1,\dots,z_t)=[\tau,\tilde n][\tilde m, \sigma].$

	Since $\langle \sigma, \tau\rangle$ is a regular subgroup of $\perm(t),$ $\sigma=\sigma_1\cdots\sigma_r$ is the product of $r$ disjoint cycles of the same length $s$, with $rs=t.$ First assume $r=1.$ In that case for every $\lambda \in \Lambda,$ there exists $\tilde m \in Z^t$ such that $[\tilde m,\sigma]=\lambda,$ and our conclusion follows taking $\tilde n=1.$
	Finally, assume $r\neq 1.$ In this case $\tau=\tau_1\cdots \tau_u$ is the product of $u$ disjoint cycles of the same length  and $\tau$ must permute cyclically the orbits $\Sigma_1, \dots, \Sigma_r$ of $\sigma.$ It is not restrictive to assume that $i \in \Sigma_i$ for $1\leq i \leq r$ and that $\tau_1(j)=j+1$ for $1\leq j\leq r-1.$ Notice that $[Z^t,\sigma]$ consists of the elements $(k_1,\dots,k_t)\in Z^t$ with the property that, for any $1\leq i\leq r$, $\prod_{\omega\in \Sigma_i}k_\omega=1.$ Given $\lambda \in \Lambda,$
	we may choose $\tilde m$ so that
	$\lambda[\tilde m, \sigma]^{-1}=(v_1,\dots,v_t)\in Z^t$ with
	$v_1\cdots v_r=1$ and $v_j=1$ if $j>r.$ But then we may find $\tilde n=(w_1,\dots,w_r,1,\dots,1)$ so that $[\tau,\tilde n]=[\tau_1,\tilde n]=(v_1,\dots,v_t)$ and therefore $\lambda=[\tau,\tilde n][\tilde m,\sigma].$
\end{proof}

\end{document}